\title[Balanced simplices]{Balanced simplices}
\author{Jonathan Chappelon}
\thanks{Corresponding Author: Phone: +33-467144166 Email: jonathan.chappelon@um2.fr}
\address{Universit\'{e} Montpellier 2, Institut de Math\'{e}matiques et de Mod\'{e}lisation de Montpellier, Case Courrier 051, Place Eug\`{e}ne Bataillon, 34095 Montpellier Cedex 05, France}
\email{jonathan.chappelon@um2.fr}
\urladdr{http://www.math.univ-montp2.fr/~chappelon/}
\keywords{Balanced simplices, additive cellular automaton, Pascal cellular automaton, Steinhaus triangles, Pascal triangle, Molluzzo's problem}
\subjclass[2010]{05B30, 11B75, 05A05, 11B50, 11A99}
\date{September 18, 2014}
\theoremstyle{plain}
\newtheorem{theorem}{Theorem}[section]
\newtheorem{lemma}[theorem]{Lemma}
\newtheorem{corollary}[theorem]{Corollary}
\newtheorem{proposition}[theorem]{Proposition}
\theoremstyle{definition}
\newtheorem{definition}[theorem]{Definition}
\newtheorem{problem}[theorem]{Problem}
\theoremstyle{remark}
\newtheorem{remark}[theorem]{Remark}
\newtheorem{case}{Case}
\renewcommand{\le}{\leqslant}
\renewcommand{\ge}{\geqslant}
\renewcommand{\geq}{\geqslant}
\newcommand{\N}{\mathbb{N}}
\newcommand{\Z}{\mathbb{Z}}
\newcommand{\ZZ}[1]{\Z/{#1}\Z}
\newcommand{\orb}{\mathcal{O}}
\newcommand{\ord}{\mathrm{ord}}
\newcommand{\pord}{\mathrm{pord}}
\newcommand{\AS}{\mathrm{AS}}
\newcommand{\AP}{\mathrm{AP}}
\newcommand{\ArA}{\mathrm{AA}}
\newcommand{\SuS}{\mathrm{SS}}
\newcommand{\m}{\mathfrak{m}}
\newcommand{\lcm}{\mathrm{lcm}}
\newcommand{\PCA}[1]{\mathrm{PCA}_{#1}}
\newcommand{\V}{\mathrm{V}}
\newcommand{\E}{\mathrm{E}}
\newcommand{\F}{\mathrm{F}}
\newcommand{\R}{\mathrm{R}}
\newcommand\frontside[3]{
  \fill[fill=black!75, draw=black,shift={(-#1/3,-#1/3)},shift={(#2,0)},
  shift={(0,#3)}] (0,0) -- (0,-1) -- (-1,-1) --(-1,0)--(0,0);
}
\newcommand\topside[3]{
  \fill[fill=black!30, draw=black,shift={(-#1/3,-#1/3)},shift={(#2,0)},
  shift={(0,#3)}] (0,0) -- (1/3,1/3) -- (-2/3,1/3) --(-1,0)--(0,0);
}
\newcommand\rightside[3]{
  \fill[fill=black!5, draw=black,shift={(-#1/3,-#1/3)},shift={(#2,0)},
  shift={(0,#3)}] (0,0) -- (1/3,1/3) -- (1/3,-2/3) --(0,-1)--(0,0);
}
\newcommand\cube[3]{
  \frontside{#1}{#2}{#3} \topside{#1}{#2}{#3} \rightside{#1}{#2}{#3}
}
\newcommand\tribackleft[4]
\let\x\pgfmathresult
\let\y\pgfmathresult		
\let\a\pgfmathresult
\let\b\pgfmathresult
			\cube{\a}{\b}{#3}
\newcommand\tetratopbackleft[4]{
	\pgfmathparse{#4-1}\let\z\pgfmathresult
	\foreach \k in {0,...,\z}
	{
		\pgfmathparse{#3-#4+1+\k}\let\c\pgfmathresult
		\pgfmathparse{\k +1}\let\t\pgfmathresult
		\tribackleft{#1}{#2}{\c}{\t}
	}
}
\newcommand\tetrabottombackleft[4]{
	\pgfmathparse{#4-1}\let\z\pgfmathresult
	\foreach \k in {0,...,\z}
	{
		\pgfmathparse{#3+\k}\let\c\pgfmathresult
		\pgfmathparse{#4-\k}\let\t\pgfmathresult
		\tribackleft{#1}{#2}{\c}{\t}
	}
}
\newcommand\tribackright[4]
\let\x\pgfmathresult
\let\y\pgfmathresult		
\let\a\pgfmathresult
\let\b\pgfmathresult
			\cube{\a}{\b}{#3}
\newcommand\tetratopbackright[4]{
	\pgfmathparse{#4-1}\let\z\pgfmathresult
	\foreach \k in {0,...,\z}
	{
		\pgfmathparse{#3-#4+1+\k}\let\c\pgfmathresult
		\pgfmathparse{\k +1}\let\t\pgfmathresult
		\tribackright{#1}{#2}{\c}{\t}
	}
}
\newcommand\tetrabottombackright[4]{
	\pgfmathparse{#4-1}\let\z\pgfmathresult
	\foreach \k in {0,...,\z}
	{
		\pgfmathparse{#3+\k}\let\c\pgfmathresult
		\pgfmathparse{#4-\k}\let\t\pgfmathresult
		\tribackright{#1}{#2}{\c}{\t}
	}
}
\newcommand\trifrontleft[4]
\let\x\pgfmathresult
\let\a\pgfmathresult
\let\b\pgfmathresult
			\cube{\a}{\b}{#3}
\newcommand\tetratopfrontleft[4]{
	\pgfmathparse{#4-1}\let\z\pgfmathresult
	\foreach \k in {0,...,\z}
	{
		\pgfmathparse{#3-#4+1+\k}\let\c\pgfmathresult
		\pgfmathparse{\k +1}\let\t\pgfmathresult
		\trifrontleft{#1}{#2}{\c}{\t}
	}
}
\newcommand\tetrabottomfrontleft[4]{
	\pgfmathparse{#4-1}\let\z\pgfmathresult
	\foreach \k in {0,...,\z}
	{
		\pgfmathparse{#3+\k}\let\c\pgfmathresult
		\pgfmathparse{#4-\k}\let\t\pgfmathresult
		\trifrontleft{#1}{#2}{\c}{\t}
	}
}
\newcommand\trifrontright[4]
\let\x\pgfmathresult
\let\a\pgfmathresult
\let\b\pgfmathresult
			\cube{\a}{\b}{#3}
\newcommand\tetratopfrontright[4]{
	\pgfmathparse{#4-1}\let\z\pgfmathresult
	\foreach \k in {0,...,\z}
	{
		\pgfmathparse{#3-#4+1+\k}\let\c\pgfmathresult
		\pgfmathparse{\k +1}\let\t\pgfmathresult
		\trifrontright{#1}{#2}{\c}{\t}
	}
}
\newcommand\tetrabottomfrontright[4]{
	\pgfmathparse{#4-1}\let\z\pgfmathresult
	\foreach \k in {0,...,\z}
	{
		\pgfmathparse{#3+\k}\let\c\pgfmathresult
		\pgfmathparse{#4-\k}\let\t\pgfmathresult
		\trifrontright{#1}{#2}{\c}{\t}
	}
}
\begin{document}

\begin{abstract}
An additive cellular automaton is a linear map on the set of infinite multidimensional arrays of elements in a finite cyclic group $\mathbb{Z}/m\mathbb{Z}$. In this paper, we consider simplices appearing in the orbits generated from arithmetic arrays by additive cellular automata. We prove that they are a source of balanced simplices, that are simplices containing all the elements of $\mathbb{Z}/m\mathbb{Z}$ with the same multiplicity. For any additive cellular automaton of dimension $1$ or higher, the existence of infinitely many balanced simplices of $\mathbb{Z}/m\mathbb{Z}$ appearing in such orbits is shown, and this, for an infinite number of values $m$. The special case of the Pascal cellular automata, the cellular automata generating the Pascal simplices, that are a generalization of the Pascal triangle into arbitrary dimension, is studied in detail.
\end{abstract}

\maketitle

\section{Introduction}

Let $m$ and $s$ be two positive integers. A \textit{Steinhaus triangle} modulo $m$ of size $s$ is a triangle, with $s$ rows, whose first row is composed of $s$ elements in $\ZZ{m}$ and satisfying the following local rule: each entry of the triangle (not on the first row) is the sum modulo $m$ of the two directly above it. A \textit{generalized Pascal triangle} modulo $m$ of size $s$ is defined like a Steinhaus triangle, but with the other possible orientation. More precisely, it is a triangle, with $s$ rows, whose last row is composed of $s$ elements in $\ZZ{m}$ and satisfying the same local rule. Examples of these triangles are depicted in Figure~\ref{fig0}.

\begin{figure}[!h]
\begin{center}
\begin{tikzpicture}[scale=0.25]

\pgfmathparse{sqrt(3)}\let\x\pgfmathresult

\node at (0,0) {$2$};
\node at (2,0) {$3$};
\node at (4,0) {$3$};
\node at (6,0) {$0$};
\node at (8,0) {$4$};
\node at (10,0) {$4$};
\node at (12,0) {$5$};

\node at (1,-\x) {$5$};
\node at (3,-\x) {$6$};
\node at (5,-\x) {$3$};
\node at (7,-\x) {$4$};
\node at (9,-\x) {$1$};
\node at (11,-\x) {$2$};

\node at (2,-2*\x) {$4$};
\node at (4,-2*\x) {$2$};
\node at (6,-2*\x) {$0$};
\node at (8,-2*\x) {$5$};
\node at (10,-2*\x) {$3$};

\node at (3,-3*\x) {$6$};
\node at (5,-3*\x) {$2$};
\node at (7,-3*\x) {$5$};
\node at (9,-3*\x) {$1$};

\node at (4,-4*\x) {$1$};
\node at (6,-4*\x) {$0$};
\node at (8,-4*\x) {$6$};

\node at (5,-5*\x) {$1$};
\node at (7,-5*\x) {$6$};

\node at (6,-6*\x) {$0$};

\draw (-1.5,0.5*\x) -- (13.5,0.5*\x) -- (6,-7*\x) -- (-1.5,0.5*\x);

\end{tikzpicture}\hspace{1.5cm}
\begin{tikzpicture}[scale=0.25]
\pgfmathparse{sqrt(3)}\let\x\pgfmathresult

\node at (0,0) {$0$};

\node at (-1,-\x){$1$};
\node at (1,-\x){$6$};

\node at (-2,-2*\x){$2$};
\node at (0,-2*\x){$0$};
\node at (2,-2*\x){$5$};

\node at (-3,-3*\x){$1$};
\node at (-1,-3*\x){$2$};
\node at (1,-3*\x){$5$};
\node at (3,-3*\x){$6$};

\node at (-4,-4*\x){$5$};
\node at (-2,-4*\x){$3$};
\node at (0,-4*\x){$0$};
\node at (2,-4*\x){$4$};
\node at (4,-4*\x){$2$};

\node at (-5,-5*\x){$3$};
\node at (-3,-5*\x){$1$};
\node at (-1,-5*\x){$3$};
\node at (1,-5*\x){$4$};
\node at (3,-5*\x){$6$};
\node at (5,-5*\x){$4$};

\draw (0,\x) -- (-6.5,-5.5*\x) -- (6.5,-5.5*\x) -- (0,\x);

\end{tikzpicture}
\end{center}
\caption{A Steinhaus triangle and a generalized Pascal triangle modulo $7$}\label{fig0}
\end{figure}

A typical problem on such triangles is to determine sizes for which there exists a \textit{balanced} triangle modulo $m$, that is a triangle containing all the elements of $\ZZ{m}$ with the same multiplicity. For instance, the triangles in Figure~\ref{fig0} are balanced modulo $7$.

The purpose of this paper is to study balanced simplices generated by additive cellular automata, that are a generalization of Steinhaus triangles and generalized Pascal triangles into arbitrary dimension and satisfying a local rule that is not necessarily the sum modulo $m$, but any linear map.

\subsection{Known results on Steinhaus triangles and generalized Pascal triangles}

The name of Steinhaus triangle is due to Hugo Steinhaus himself in \cite{Steinhaus1964}, where he proposed this construction in the binary case $m=2$. He posed the following problem, as an unsolved problem.

\begin{problem}[Steinhaus \cite{Steinhaus1964}]\label{probstein}
Does there exist, for all positive integers $s$ such that $s\equiv 0$ or $3\bmod{4}$, a Steinhaus triangle of size $s$ in $\ZZ{2}$ containing as many $0$'s as $1$'s?
\end{problem}
Remark that the condition on the size $s$ of a balanced Steinhaus triangle in $\ZZ{2}$ is obviously a necessary condition because the number of elements of a such triangle, that is $\binom{s+1}{2}$, is even if and only if $s\equiv 0$ or $3\bmod{4}$. A positive solution to this problem appeared in the literature for the first time in \cite{Harborth1972}, where the author gave, for every $s\equiv0$ or $3\bmod{4}$, an explicit construction of a balanced Steinhaus triangle of size $s$ in $\ZZ{2}$. More recently, several other constructions of balanced binary Steinhaus triangles have been obtained by considering sequences with additional properties such as strongly balanced \cite{Eliahou2004}, symmetric and antisymmetric \cite{Eliahou2005}, or zero-sum sequences \cite{Eliahou2007}.

The number of occurrences of $1$ appearing in binary Steinhaus triangles and in binary generalized Pascal triangles are studied in \cite{Chang1983} and in \cite{Harborth2005}, respectively. Such binary triangles having certain geometric properties are studied in \cite{Barbe2000,Brunat2011}. A binary Steinhaus triangle can also be considered as the upper triangular part of the adjacency matrix of a finite graph. These undirected graphs are called Steinhaus graphs in \cite{Molluzzo1978}. A classical problem on Steinhaus graphs is to study those having certain graphical properties such as bipartition \cite{Chang1999,Dymacek1986,Dymacek1995}, planarity \cite{Dymacek2000} or regularity \cite{Augier2008,Bailey1988,Chappelon2009,Dymacek1979}. A survey on Steinhaus graphs can be found in \cite{Dymacek1999}.

Problem~\ref{probstein} on balanced binary Steinhaus triangles was generalized for any positive integers $m$ by Molluzzo in \cite{Molluzzo1978}.

\begin{problem}[Molluzzo \cite{Molluzzo1978}]\label{probmolluzzo}
Let $m$ be a positive integer. Does there exist, for all positive integers $s$ such that $\binom{s+1}{2}$ is divisible by $m$, a Steinhaus triangle of size $s$ containing all the elements of $\ZZ{m}$ with the same multiplicity?
\end{problem}

Since then, this problem has been positively solved, by constructive approaches, for small values of $m$: for $m\in\{3,5\}$ in \cite{Bartsch1985}, for $m\in\{3,5,7\}$ in \cite{Chappelon2008a}, for $m=4$ in \cite{Chappelon2012a}. First counter-examples appeared in \cite{Chappelon2008a}, where the author proved that there does not exist balanced Steinhaus triangles of size $5$ in $\ZZ{15}$ and of size $6$ in $\ZZ{21}$. Nevertheless, this problem can be positively answered for an infinite number of values $m$. Indeed, as showed in \cite{Chappelon2008a,Chappelon2008}, there exist balanced Steinhaus triangles, for all the possible sizes, in the case where $m$ is a power of $3$. More precisely, the author obtained the following result.

\begin{theorem}[Chappelon \cite{Chappelon2008a,Chappelon2008}]\label{chap1}
Let $m$ be an odd number and let $a$ and $d$ be in $\ZZ{m}$ such that $d$ is invertible. The Steinhaus triangle, of size $s$, whose first row is the arithmetic progression $(a,a+d,a+2d,\ldots,a+(s-1)d)$ is balanced in $\ZZ{m}$, for all $s\equiv 0\ \text{or}\ -1\pmod{\ord_{m}(2^m)m}$, where $\ord_{m}(2^m)$ is the multiplicative order of $2^m$ modulo $m$, i.e., the order of $2^m$ in the group of invertibles $(\ZZ{m})^*$.
\end{theorem}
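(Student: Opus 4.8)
The plan is to reduce the whole statement to a single question about how one ``triangular block'' of partial arithmetic progressions meets the orbits of multiplication by $2^{m}$.

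First I would compute a closed form for the entries. Writing the first row as $x_\ell=a+\ell d$ and iterating $a_{i,j}=a_{i-1,j-1}+a_{i,j-1}$, the entry in row $j$ is the binomial combination $\sum_{k=0}^{j}\binom{j}{k}x_{i+k}$, which with $\sum_k\binom jk=2^{j}$ and $\sum_k k\binom jk=j2^{\,j-1}$ collapses to
\[
a_{i,j}=2^{\,j-1}\bigl(2a+2di+dj\bigr)\pmod m .
\]
Since $m$ is odd and $d$ is invertible I may rescale the triangle by $d^{-1}$ and assume $d=1$, as multiplying by a unit preserves balancedness. The two decisive features are: (i) for fixed $j$ the row is an arithmetic progression with invertible common difference $2^{\,j}d$, so any $m$ consecutive entries of a row are balanced; and (ii) the vertical relation $a_{i,j+m}=2^{m}a_{i,j}$, valid because $dm\equiv0\pmod m$.

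Using (i) I would peel complete length-$m$ segments from each row: writing $L_j=s-j=q_jm+r_j$ with $0\le r_j<m$, the $q_jm$ leading entries contribute $q_j$ to every element of $\ZZ m$, so the triangle is balanced if and only if the multiset $\mathcal T=\bigsqcup_{j=0}^{s-1}T_j$ of the length-$r_j$ \emph{tails} is balanced. Relation (ii) gives $T_{j+m}=2^{m}T_j$, so grouping the rows by the residue $u=j\bmod m$ and putting $H=\langle2^{m}\rangle\le(\ZZ m)^\ast$, $\tau=|H|=\ord_m(2^m)$, the tails in one residue class run through the $H$-multiples $(2^m)^pT_u$. Here the hypothesis $s\equiv0$ or $-1\pmod{\tau m}$ enters: a short count of $n_u=\#\{p:pm+u\le s-1\}$ shows that each residue class either contains a whole number of full $H$-cycles or has $r_u=0$ (empty tail), and that this number of cycles is the same for all nonempty classes. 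Consequently $\mathcal T$ is, up to one common multiplicity, the $H$-saturation $\bigsqcup_{h\in H}h\mathcal B$ of the single block $\mathcal B=\bigsqcup_{u=0}^{m-1}T_u$, whose tail lengths form the triangular profile $r_u=(-u)\bmod m$ (resp.\ $(-1-u)\bmod m$). In particular the balance condition no longer depends on how many full periods $s$ contains, which disposes of the free parameter in $s\equiv0,-1\pmod{\tau m}$ at one stroke.

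It remains to prove the crux: the $H$-saturation of $\mathcal B$ is balanced. Because the multiplicity of $v$ in $\bigsqcup_{h\in H}h\mathcal B$ is automatically constant on each $H$-orbit, this is equivalent to showing that $\mathcal B$ meets every orbit $O$ of $H$ acting by multiplication on $\ZZ m$ in exactly $\tfrac{m-1}{2}\lvert O\rvert$ points; equivalently, that the subgroup character sums $\sum_{x\in\mathcal B}\sum_{h\in H}\zeta_m^{\,bhx}$ vanish for all $b\not\equiv0$, where $\zeta_m=e^{2\pi i/m}$. This is the step I expect to be the main obstacle, as it is the only place where the arithmetic of $2^{m}$ modulo $m$ is used essentially rather than for bookkeeping. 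I would attack it by substituting the explicit form $T_u=2^{\,u-1}\{\,2a+u+2\ell:0\le\ell<r_u\,\}$ and evaluating the resulting partial character sums, organising them by the cosets $2^{\,u-1}H$ of $H$ inside $\langle2\rangle$; the triangular shape $r_u=(-u)\bmod m$ is precisely what makes the staircase of partial progressions tile each orbit evenly. The case $m=3$, where $H=(\ZZ 3)^\ast$ and $\mathcal B=\{0,1,1\}$ meets the orbits $\{0\}$ and $\{1,2\}$ with equal density, confirms the mechanism, and the general odd case should follow from the same coset bookkeeping, if necessary after splitting $m$ into prime powers by the Chinese Remainder Theorem.
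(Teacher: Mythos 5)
Your reduction is sound as far as it goes: the closed form $a_{i,j}=2^{\,j-1}(2a+2di\pm dj)$, the fact that each row is an arithmetic progression with invertible common difference $2^{\,j}d$ (so full length-$m$ segments may be discarded), the vertical relation $a_{i,j+m}=2^{m}a_{i,j}$, and the count showing that for $s\equiv 0$ or $-1\pmod{\ord_m(2^m)m}$ the tails in each residue class $j\bmod m$ form a whole number of $\langle 2^m\rangle$-cycles all check out (I verified the resulting claim numerically for $m=5$ and $m=7$). This correctly eliminates the free multiple of $\ord_m(2^m)m$ in $s$ and reduces the theorem to the single assertion that $\bigsqcup_{h\in H}h\mathcal{B}$ is balanced, where $H=\langle 2^m\rangle$ and $\mathcal{B}$ is the staircase of tails coming from one period of rows.

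But that assertion is exactly where all of the content of the theorem lives, and you do not prove it. Your $\mathcal{B}$ is itself a triangle of size $m-1$ appearing in the same orbit, so ``the $H$-saturation of $\mathcal{B}$ is balanced'' is essentially the theorem in the base case $s=\ord_m(2^m)m$: the peeling removes the parameter $\lambda$ but none of the difficulty. The proposed attack --- evaluating $\sum_{x\in\mathcal{B}}\sum_{h\in H}\zeta_m^{\,bhx}$ by ``coset bookkeeping'' in $\langle 2\rangle/H$ --- is only a plan: the tails are incomplete arithmetic progressions of varying lengths and varying common differences $2^{u}d$, the resulting partial character sums do not telescope in any evident way, and for composite $m$ the $H$-orbits on non-units force additional case analysis; checking $m=3$ does not confirm a mechanism. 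For comparison, the paper obtains this statement as the $n=2$, $W=(1,1,0)$ case of Theorem~\ref{thm1}, whose proof uses a genuinely two-dimensional decomposition: extracting every $\ord_m(2)$-th entry in \emph{both} directions produces arithmetic sub-triangles with invertible common differences (Proposition~\ref{propsub}); these are shown balanced via the facet-difference identity of Lemma~\ref{keylemma} and Theorem~\ref{thm2}; and the interaction between $\ord_m(2)$ and $m$ is absorbed by an induction on $m$ through the projection Theorem~\ref{thmproj}. To complete your argument you would need either to carry out the character-sum evaluation in full, or to replace the row-only peeling by a decomposition of $\bigsqcup_{h\in H}h\mathcal{B}$ into pieces that are provably balanced --- which is precisely what the paper's two-directional extraction accomplishes.
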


In particular, when $m=3^k$, the equality $\ord_{m}(2^m)=2$ can be easily proved by induction on $k\geq 1$. It follows, from Theorem~\ref{chap1}, that there exist balanced Steinhaus triangles of size $s$ in $\ZZ{3^k}$ for all $s\equiv 0$ or $-1\bmod{2.3^k}$. This result can be refined by considering Steinhaus triangles whose first row has the additional property to be antisymmetric. Thus, the author obtained a positive answer to the Molluzzo problem for all $m=3^k$. Even if the Molluzzo problem is not completely solved for the other odd values of $m$, we know from Theorem~\ref{chap1} that there exist infinitely many balanced Steinhaus triangles in every $\ZZ{m}$ with $m$ odd. This weak version of the Molluzzo problem was posed in \cite{Chappelon2012a}.

\begin{problem}[Weak Molluzzo problem]\label{weakmolluzzo}
Let $m$ be a positive integer. Do there exist infinitely many balanced Steinhaus triangles of $\ZZ{m}$?
\end{problem}

Problem~\ref{weakmolluzzo} is thus solved for the odd numbers $m$. For the even values, the cases $m=2$ and $m=4$ come from the solutions of Problem~\ref{probmolluzzo} and a solution will appear in \cite{Eliahou2014} for $m\in\{6,8,10\}$. This problem is completely open for the even numbers $m\ge12$. Similar problems of determining the existence of balanced structures can be considered for other shapes. In \cite{Chappelon2011}, the author not only considers balanced Steinhaus triangles, but also balanced generalized Pascal triangles, trapezoids or lozenges. In particular, for Steinhaus triangles and generalized Pascal triangles, the following result is proved.

\begin{theorem}[Chappelon, \cite{Chappelon2011}]\label{chap2}
Let $m$ be an odd number. For all $s\equiv 0\bmod{m}$ and for all $s\equiv -1 \bmod{3m}$, there exist Steinhaus triangles and generalized Pascal triangles of size $s$, which are balanced in $\ZZ{m}$.
\end{theorem}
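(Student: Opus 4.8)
The plan is to reduce the problem to arithmetic-progression first rows and then to a single character-sum identity. First I would restrict attention to triangles whose first row is an arithmetic progression $(a,a+d,\ldots,a+(s-1)d)$ with $d$ invertible in $\ZZ{m}$; these are the natural candidates and, by Theorem~\ref{chap1}, are already known to yield balanced Steinhaus triangles for suitable sizes. Expanding each entry by the binomial formula $a_{i,j}=\sum_{k=0}^{j}\binom{j}{k}a_{i+k,0}$ and using $\sum_{k}k\binom{j}{k}=j2^{j-1}$ gives the closed form $a_{i,j}=2^{j-1}\bigl(2a+(2i+j)d\bigr)$ in $\ZZ{m}$, where $2$ is invertible because $m$ is odd. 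In particular, for each fixed $j$ the $j$-th row is itself an arithmetic progression of length $\ell_j=s-j$, with invertible common difference $\delta_j=2^{j}d$ and initial term $c_j=2^{j-1}(2a+jd)$.

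Next I would translate balance into harmonic analysis on $\ZZ{m}$: a multiset is balanced if and only if $\sum_{x}\chi(x)=0$ for every nontrivial additive character $\chi$. Summing the geometric progression in each row yields
$$
\sum_{x\in\triangle}\chi(x)=\sum_{j=0}^{s-1}\chi(c_j)\,\frac{\chi(\delta_j)^{\ell_j}-1}{\chi(\delta_j)-1}=\sum_{j=0}^{s-1}\frac{\chi(c_j')-\chi(c_j)}{\chi(\delta_j)-1},
$$
where $c_j'=c_j+\ell_j\delta_j=2^{j-1}(2a+(2s-j)d)$, and where no denominator vanishes since $\delta_j$ is invertible and $\chi$ is nontrivial. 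Thus the theorem reduces to showing that this sum $S(\chi)$ vanishes for all nontrivial $\chi$, for a well-chosen parameter $a$ and for the two prescribed residue classes of $s$. Equivalently, writing $\ell_j\equiv\rho_j\pmod m$ with $0\le\rho_j<m$, each row contributes $\lfloor\ell_j/m\rfloor$ uniformly plus its \emph{head} $H_j$ (the arithmetic segment of length $\rho_j$ starting at $c_j$ with step $\delta_j$), so balance of $\triangle$ is equivalent to balance of the disjoint union $\bigsqcup_j H_j$.

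The decisive and hardest step is the evaluation of $S(\chi)$, i.e.\ the tiling of $\ZZ{m}$ by the heads. The obstruction is the factor $2^{j-1}$: it forces the rows onto arithmetic progressions with genuinely different common differences $\delta_j=2^{j}d$, so the rows cannot be summed uniformly and the powers $2^{j}$ must be tracked modulo $\ord_{m}(2)$. The strategy I would follow is to group the indices $j$ simultaneously by $j\bmod m$ (which governs the head-length $\rho_j$) and by $2^{j}\bmod m$ (which governs the twist), replacing each block of rows by a single \emph{folded} progression, and then to choose $a$ so that the family $\{H_j\}$ acquires the reflection symmetry $j\leftrightarrow s-1-j$ — this is exactly why a specific \emph{centered} value of $a$ is forced rather than an arbitrary one. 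The two congruence conditions are precisely what make the head-lengths pair into complete systems of residues: when $s\equiv0\pmod m$ the values $\rho_j$ run periodically through $0,1,\ldots,m-1$, and when $s\equiv-1\pmod{3m}$ the extra factor $3$ supplies the room needed to absorb the $2^{j-1}$ twisting into complete tilings of $\ZZ{m}$. I expect this cancellation/tiling argument, rather than any single closed identity, to be the technical core.

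Finally, the case of generalized Pascal triangles I would treat in parallel. Since, as recalled above (see Figure~\ref{fig1}), both the $(-+)$- and the $(+-)$-triangles occur inside one and the same orbit of $\PCA{1}$, a generalized Pascal triangle on an arithmetic first row admits an analogous closed form, and the same character-sum reduction applies verbatim once $c_j$ and $\delta_j$ are adjusted for the reversed orientation; alternatively one transfers balance directly through the reflection symmetry of the orbit. The divisibility hypotheses required for the head-tiling are identical for the two shapes, which is why a single statement covers both.
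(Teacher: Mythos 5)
There is a genuine gap, and it is fatal: your very first reduction — restricting to first rows that are a single arithmetic progression $(a,a+d,\ldots,a+(s-1)d)$ — cannot produce balanced triangles for the sizes claimed in the theorem. With an arithmetic first row, your own closed form shows that row $j$ is an arithmetic progression with common difference $2^{j}d$ (up to sign conventions), and the sizes for which such a triangle is balanced are exactly those of Theorem~\ref{chap1}, namely $s\equiv 0$ or $-1\bmod{\ord_m(2^m)m}$ — not $s\equiv 0\bmod m$ and $s\equiv-1\bmod{3m}$. A concrete counterexample: take $m=5$, $s=5$, $d=1$. The Steinhaus triangle on $(a,a+1,a+2,a+3,a+4)$ has rows with multisets (writing everything mod $5$) $\{2a,2a+1,2a+2,2a+3\}$, $\{4a+2,4a+3,4a+4\}$, $\{3a,3a+2\}$, $\{a+2\}$ below the complete first row, and a direct count shows the element $2a+2$ occurs five times while $2a+4$ occurs twice, for every choice of $a$; multiplying by a unit handles every invertible $d$. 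So no ``well-chosen centered $a$'' exists, and the hoped-for cancellation of the character sum $S(\chi)$ for $s\equiv 0\bmod m$ is simply false. The step you flag as ``the decisive and hardest step'' — absorbing the $2^{j-1}$ twist into complete tilings of $\ZZ{m}$ using only the congruences $s\equiv0\bmod m$ or $s\equiv-1\bmod{3m}$ — is precisely where the argument breaks, and no choice of $a$ repairs it.

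The cited proof avoids this obstruction by changing the input data, not the bookkeeping: the first row is taken to be the interlacing of \emph{three} arithmetic progressions, $(\ldots,0,-1,1,1,-3,2,2,-5,3,3,\ldots)$, so that the triangle decomposes (by extracting every third term in each direction, in the spirit of Proposition~\ref{propsub}) into doubly arithmetic subtriangles, i.e.\ the arithmetic triangles $\AS(a,(d_1,d_2),\cdot)$ of Section~3, whose balancedness for sizes $\equiv 0$ or $-1\bmod m$ is elementary (Theorem~\ref{thm2}). That is what removes the $\ord_m(2)$ factor from the admissible sizes and produces the moduli $m$ and $3m$ in the statement. Your character-sum reformulation of balancedness and the row-by-row head decomposition are correct as far as they go, but they cannot substitute for this change of initial sequence.
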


In fact, the author proves that all the balanced triangles announced in Theorem~\ref{chap2} are obtained from the following sequence
$$
(\ldots,0,-1,1,1,-3,2,2,-5,3,3,-7,4,4,-9,5,5,\ldots),
$$
which is an interlacing of three arithmetic progressions, and this, for every odd number $m$. The main ingredient in the proof of this result is an elementary object that the author called a doubly arithmetic triangle in \cite{Chappelon2011} and that we simply call an arithmetic triangle here in this paper. An arithmetic triangle with common differences $d_1$ and $d_2$ is a triangle of elements in $\ZZ{m}$ whose rows and columns are arithmetic progressions with the same common differences, $d_1$ for the rows and $d_2$ for the columns. The interest of this structure is that it is very often balanced.

\subsection{Simplices generated by additive cellular automata}

Let us now define the generalization of Steinhaus triangles and generalized Pascal triangles that we consider here. Let $n$ and $m$ be positive integers. Throughout this paper, $n$ will denote the dimension of the objects studied and $m$ the order of the finite cyclic group $\ZZ{m}$. For any integers $a$ and $b$ such that $a<b$, we let $[a,b]$ denote the set of the integers between $a$ and $b$, that is, $[a,b]:=\{a,a+1,\ldots,b\}$ and $[a,b]^{n}$ the Cartesian product of $n$ copies of $[a,b]$. For any $n$-tuple of elements $u$, we let $u_i$ denote its $i$th component for all $i\in[1,n]$, that is, $u=(u_1,\ldots,u_n)$. For two $n$-tuples $u$ and $v$ and an integer $\lambda$, we consider the sum $u+v:=(u_1+v_1,\ldots,u_n+v_n)$, the product $u\cdot v := (u_1v_1,\ldots,u_n v_n)$ and the scalar product $\lambda u:=(\lambda u_1,\ldots,\lambda u_n)$.

\begin{definition}[ACA]
Let $r$ be a non-negative integer and let $W=(w_j)_{j\in[-r,r]^n}$ be an $n$-dimensional array of integers of size $(2r+1)^{n}$. The \textit{additive cellular automaton} (ACA for short) over $\ZZ{m}$ associated with $W$ is the map $\partial$ which assigns, to every $n$-dimensional infinite array of $\ZZ{m}$, a new array by a linear transformation whose coefficients are those of $W$. More precisely, the map $\partial$ is defined by
$$
\partial\left( (a_i)_{i\in\Z^n} \right) = \left(\sum_{j\in[-r,r]^n}{w_ja_{i+j}}\right)_{i\in\Z^n},
$$
for all arrays $(a_i)_{i\in\Z^n}$ of elements in $\ZZ{m}$. We say that $\partial$ is of \textit{dimension} $n\ge1$ and of \textit{weight} $W$ with \textit{radius} $r\ge0$.
\end{definition}

\begin{definition}[Orbit]
Let $A=(a_i)_{i\in\Z^n}$ be an infinite array of $\ZZ{m}$ of dimension $n$. The \textit{orbit} $\orb(A)$ generated from $A$ by the ACA $\partial$ is the collection of all the $n$-dimensional arrays obtained from $A$ by successive applications of $\partial$, that is,
$$
\orb(A) := \left\{ \partial^{j}(A)\ \middle|\ j\in\N\right\},
$$
where $\partial^{j}$ is recursively defined by $\partial^{j}(A)=\partial(\partial^{j-1}(A))$ for all $j\ge1$ and $\partial^{0}(A)=A$. The orbit $\orb(A)$ can also be seen as the $(n+1)$-dimensional array $\left(a_{i,j}\right)_{(i,j)\in\Z^{n}\times\N}$ of $\ZZ{m}$ whose $j$th row $R_j:=\left(a_{i,j}\right)_{i\in\Z^n}$ corresponds to $\partial^{j}(A)$, for all $j\in\N$.
\end{definition}

\begin{definition}[Simplices]
Let $A=\left( a_i \right)_{i\in\Z^{n}}$ be an infinite array of $\ZZ{m}$ of dimension $n$. Let $\varepsilon\in\{-1,1\}^{n}$ and let $s$ be a positive integer. The \textit{simplex} of size $s$, with orientation $\varepsilon$ and whose principal vertex is at the coordinates $j\in\Z^{n}$ in $A$, is the multiset of $\ZZ{m}$ defined and denoted by
$$
\triangle(j,\varepsilon,s) := \left\{ a_{j+\varepsilon \cdot k}\ \middle|\ k\in\N^{n}\ \text{such\ that}\ k_1+\cdots+k_n \le s-1 \right\}.
$$
For $n=2$ and $n=3$, it is called a triangle and a tetrahedron, respectively.
\end{definition}

\begin{figure}
\begin{center}
\begin{tikzpicture}[scale=0.5]
\draw[fill=black!40,draw=white] (2,9) -- (2,14) -- (7,14) -- (7,13) -- (6,13) -- (6,12) -- (5,12) -- (5,11) -- (4,11) -- (4,10) -- (3,10) -- (3,9) -- (2,9);
\draw[fill=black!40,draw=white] (2,2) -- (2,7) -- (3,7)  -- (3,6) -- (4,6) -- (4,5) -- (5,5) -- (5,4) -- (6,4) -- (6,3) -- (7,3) -- (7,2) -- (2,2);
\draw[fill=black!40,draw=white] (9,2) -- (14,2) -- (14,7) -- (13,7) -- (13,6) -- (12,6) -- (12,5) -- (11,5) -- (11,4) -- (10,4) -- (10,3) -- (9,3) -- (9,2);
\draw[fill=black!40,draw=white] (9,14) -- (14,14) -- (14,9) -- (13,9) -- (13,10) -- (12,10) -- (12,11) -- (11,11) -- (11,12) -- (10,12) -- (10,13) -- (9,13) -- (9,14);
\draw (0,0) -- (0,16) -- (16,16) -- (16,0) -- (0,0);
\draw (0,15) -- (16,15);
\draw (0,14) -- (16,14);
\draw (0,13) -- (16,13);
\draw (0,12) -- (16,12);
\draw (0,11) -- (16,11);
\draw (0,10) -- (16,10);
\draw (0,9) -- (16,9);
\draw (0,8) -- (16,8);
\draw (0,7) -- (16,7);
\draw (0,6) -- (16,6);
\draw (0,5) -- (16,5);
\draw (0,4) -- (16,4);
\draw (0,3) -- (16,3);
\draw (0,2) -- (16,2);
\draw (0,1) -- (16,1);
\draw (1,0) -- (1,16);
\draw (2,0) -- (2,16);
\draw (3,0) -- (3,16);
\draw (4,0) -- (4,16);
\draw (5,0) -- (5,16);
\draw (6,0) -- (6,16);
\draw (7,0) -- (7,16);
\draw (8,0) -- (8,16);
\draw (9,0) -- (9,16);
\draw (10,0) -- (10,16);
\draw (11,0) -- (11,16);
\draw (12,0) -- (12,16);
\draw (13,0) -- (13,16);
\draw (14,0) -- (14,16);
\draw (15,0) -- (15,16);

\node at (0.5,15.5) {\tiny $a_{0,0}$};
\node at (1.5,15.5) {\tiny $a_{1,0}$};
\node at (2.5,15.5) {$2$};
\node at (3.5,15.5) {$3$};
\node at (4.5,15.5) {$4$};
\node at (5.5,15.5) {$0$};
\node at (6.5,15.5) {$1$};
\node at (7.5,15.5) {$2$};
\node at (8.5,15.5) {$3$};
\node at (9.5,15.5) {$4$};
\node at (10.5,15.5) {$0$};
\node at (11.5,15.5) {$1$};
\node at (12.5,15.5) {$2$};
\node at (13.5,15.5) {$3$};
\node at (14.5,15.5) {$4$};
\node at (15.5,15.5) {$0$};

\node at (0.5,14.5) {\tiny $a_{0,1}$};
\node at (1.5,14.5) {\tiny $a_{1,1}$};
\node at (2.5,14.5) {$2$};
\node at (3.5,14.5) {$1$};
\node at (4.5,14.5) {$0$};
\node at (5.5,14.5) {$4$};
\node at (6.5,14.5) {$3$};
\node at (7.5,14.5) {$2$};
\node at (8.5,14.5) {$1$};
\node at (9.5,14.5) {$0$};
\node at (10.5,14.5) {$4$};
\node at (11.5,14.5) {$3$};
\node at (12.5,14.5) {$2$};
\node at (13.5,14.5) {$1$};
\node at (14.5,14.5) {$0$};
\node at (15.5,14.5) {$4$};

\node at (0.5,13.5) {$2$};
\node at (1.5,13.5) {$3$};
\node at (2.5,13.5) {$4$};
\node at (3.5,13.5) {$0$};
\node at (4.5,13.5) {$1$};
\node at (5.5,13.5) {$2$};
\node at (6.5,13.5) {$3$};
\node at (7.5,13.5) {$4$};
\node at (8.5,13.5) {$0$};
\node at (9.5,13.5) {$1$};
\node at (10.5,13.5) {$2$};
\node at (11.5,13.5) {$3$};
\node at (12.5,13.5) {$4$};
\node at (13.5,13.5) {$0$};
\node at (14.5,13.5) {$1$};
\node at (15.5,13.5) {$2$};

\node at (0.5,12.5) {$2$};
\node at (1.5,12.5) {$1$};
\node at (2.5,12.5) {$0$};
\node at (3.5,12.5) {$4$};
\node at (4.5,12.5) {$3$};
\node at (5.5,12.5) {$2$};
\node at (6.5,12.5) {$1$};
\node at (7.5,12.5) {$0$};
\node at (8.5,12.5) {$4$};
\node at (9.5,12.5) {$3$};
\node at (10.5,12.5) {$2$};
\node at (11.5,12.5) {$1$};
\node at (12.5,12.5) {$0$};
\node at (13.5,12.5) {$4$};
\node at (14.5,12.5) {$3$};
\node at (15.5,12.5) {$2$};

\node at (0.5,11.5) {$4$};
\node at (1.5,11.5) {$0$};
\node at (2.5,11.5) {$1$};
\node at (3.5,11.5) {$2$};
\node at (4.5,11.5) {$3$};
\node at (5.5,11.5) {$4$};
\node at (6.5,11.5) {$0$};
\node at (7.5,11.5) {$1$};
\node at (8.5,11.5) {$2$};
\node at (9.5,11.5) {$3$};
\node at (10.5,11.5) {$4$};
\node at (11.5,11.5) {$0$};
\node at (12.5,11.5) {$1$};
\node at (13.5,11.5) {$2$};
\node at (14.5,11.5) {$3$};
\node at (15.5,11.5) {$4$};

\node at (0.5,10.5) {$0$};
\node at (1.5,10.5) {$4$};
\node at (2.5,10.5) {$3$};
\node at (3.5,10.5) {$2$};
\node at (4.5,10.5) {$1$};
\node at (5.5,10.5) {$0$};
\node at (6.5,10.5) {$4$};
\node at (7.5,10.5) {$3$};
\node at (8.5,10.5) {$2$};
\node at (9.5,10.5) {$1$};
\node at (10.5,10.5) {$0$};
\node at (11.5,10.5) {$4$};
\node at (12.5,10.5) {$3$};
\node at (13.5,10.5) {$2$};
\node at (14.5,10.5) {$1$};
\node at (15.5,10.5) {$0$};

\node at (0.5,9.5) {$1$};
\node at (1.5,9.5) {$2$};
\node at (2.5,9.5) {$3$};
\node at (3.5,9.5) {$4$};
\node at (4.5,9.5) {$0$};
\node at (5.5,9.5) {$1$};
\node at (6.5,9.5) {$2$};
\node at (7.5,9.5) {$3$};
\node at (8.5,9.5) {$4$};
\node at (9.5,9.5) {$0$};
\node at (10.5,9.5) {$1$};
\node at (11.5,9.5) {$2$};
\node at (12.5,9.5) {$3$};
\node at (13.5,9.5) {$4$};
\node at (14.5,9.5) {$0$};
\node at (15.5,9.5) {$1$};

\node at (0.5,8.5) {$4$};
\node at (1.5,8.5) {$3$};
\node at (2.5,8.5) {$2$};
\node at (3.5,8.5) {$1$};
\node at (4.5,8.5) {$0$};
\node at (5.5,8.5) {$4$};
\node at (6.5,8.5) {$3$};
\node at (7.5,8.5) {$2$};
\node at (8.5,8.5) {$1$};
\node at (9.5,8.5) {$0$};
\node at (10.5,8.5) {$4$};
\node at (11.5,8.5) {$3$};
\node at (12.5,8.5) {$2$};
\node at (13.5,8.5) {$1$};
\node at (14.5,8.5) {$0$};
\node at (15.5,8.5) {$4$};

\node at (0.5,7.5) {$2$};
\node at (1.5,7.5) {$3$};
\node at (2.5,7.5) {$4$};
\node at (3.5,7.5) {$0$};
\node at (4.5,7.5) {$1$};
\node at (5.5,7.5) {$2$};
\node at (6.5,7.5) {$3$};
\node at (7.5,7.5) {$4$};
\node at (8.5,7.5) {$0$};
\node at (9.5,7.5) {$1$};
\node at (10.5,7.5) {$2$};
\node at (11.5,7.5) {$3$};
\node at (12.5,7.5) {$4$};
\node at (13.5,7.5) {$0$};
\node at (14.5,7.5) {$1$};
\node at (15.5,7.5) {$2$};

\node at (0.5,6.5) {$2$};
\node at (1.5,6.5) {$1$};
\node at (2.5,6.5) {$0$};
\node at (3.5,6.5) {$4$};
\node at (4.5,6.5) {$3$};
\node at (5.5,6.5) {$2$};
\node at (6.5,6.5) {$1$};
\node at (7.5,6.5) {$0$};
\node at (8.5,6.5) {$4$};
\node at (9.5,6.5) {$3$};
\node at (10.5,6.5) {$2$};
\node at (11.5,6.5) {$1$};
\node at (12.5,6.5) {$0$};
\node at (13.5,6.5) {$4$};
\node at (14.5,6.5) {$3$};
\node at (15.5,6.5) {$2$};

\node at (0.5,5.5) {$4$};
\node at (1.5,5.5) {$0$};
\node at (2.5,5.5) {$1$};
\node at (3.5,5.5) {$2$};
\node at (4.5,5.5) {$3$};
\node at (5.5,5.5) {$4$};
\node at (6.5,5.5) {$0$};
\node at (7.5,5.5) {$1$};
\node at (8.5,5.5) {$2$};
\node at (9.5,5.5) {$3$};
\node at (10.5,5.5) {$4$};
\node at (11.5,5.5) {$0$};
\node at (12.5,5.5) {$1$};
\node at (13.5,5.5) {$2$};
\node at (14.5,5.5) {$3$};
\node at (15.5,5.5) {$4$};

\node at (0.5,4.5) {$0$};
\node at (1.5,4.5) {$4$};
\node at (2.5,4.5) {$3$};
\node at (3.5,4.5) {$2$};
\node at (4.5,4.5) {$1$};
\node at (5.5,4.5) {$0$};
\node at (6.5,4.5) {$4$};
\node at (7.5,4.5) {$3$};
\node at (8.5,4.5) {$2$};
\node at (9.5,4.5) {$1$};
\node at (10.5,4.5) {$0$};
\node at (11.5,4.5) {$4$};
\node at (12.5,4.5) {$3$};
\node at (13.5,4.5) {$2$};
\node at (14.5,4.5) {$1$};
\node at (15.5,4.5) {$0$};

\node at (0.5,3.5) {$1$};
\node at (1.5,3.5) {$2$};
\node at (2.5,3.5) {$3$};
\node at (3.5,3.5) {$4$};
\node at (4.5,3.5) {$0$};
\node at (5.5,3.5) {$1$};
\node at (6.5,3.5) {$2$};
\node at (7.5,3.5) {$3$};
\node at (8.5,3.5) {$4$};
\node at (9.5,3.5) {$0$};
\node at (10.5,3.5) {$1$};
\node at (11.5,3.5) {$2$};
\node at (12.5,3.5) {$3$};
\node at (13.5,3.5) {$4$};
\node at (14.5,3.5) {$0$};
\node at (15.5,3.5) {$1$};

\node at (0.5,2.5) {$4$};
\node at (1.5,2.5) {$3$};
\node at (2.5,2.5) {$2$};
\node at (3.5,2.5) {$1$};
\node at (4.5,2.5) {$0$};
\node at (5.5,2.5) {$4$};
\node at (6.5,2.5) {$3$};
\node at (7.5,2.5) {$2$};
\node at (8.5,2.5) {$1$};
\node at (9.5,2.5) {$0$};
\node at (10.5,2.5) {$4$};
\node at (11.5,2.5) {$3$};
\node at (12.5,2.5) {$2$};
\node at (13.5,2.5) {$1$};
\node at (14.5,2.5) {$0$};
\node at (15.5,2.5) {$4$};

\node at (0.5,1.5) {$2$};
\node at (1.5,1.5) {$3$};
\node at (2.5,1.5) {$4$};
\node at (3.5,1.5) {$0$};
\node at (4.5,1.5) {$1$};
\node at (5.5,1.5) {$2$};
\node at (6.5,1.5) {$3$};
\node at (7.5,1.5) {$4$};
\node at (8.5,1.5) {$0$};
\node at (9.5,1.5) {$1$};
\node at (10.5,1.5) {$2$};
\node at (11.5,1.5) {$3$};
\node at (12.5,1.5) {$4$};
\node at (13.5,1.5) {$0$};
\node at (14.5,1.5) {$1$};
\node at (15.5,1.5) {$2$};

\node at (0.5,0.5) {$2$};
\node at (1.5,0.5) {$1$};
\node at (2.5,0.5) {$0$};
\node at (3.5,0.5) {$4$};
\node at (4.5,0.5) {$3$};
\node at (5.5,0.5) {$2$};
\node at (6.5,0.5) {$1$};
\node at (7.5,0.5) {$0$};
\node at (8.5,0.5) {$4$};
\node at (9.5,0.5) {$3$};
\node at (10.5,0.5) {$2$};
\node at (11.5,0.5) {$1$};
\node at (12.5,0.5) {$0$};
\node at (13.5,0.5) {$4$};
\node at (14.5,0.5) {$3$};
\node at (15.5,0.5) {$2$};
\end{tikzpicture}
\end{center}
\caption{\label{fig1}Example of triangles $\triangle((2,2),++,5)$, $\triangle((2,13),+-,5)$, $\triangle((13,2),-+,5)$ and $\triangle((13,13),--,5)$ appearing in an orbit $\orb(A)=(a_{i,j})_{(i,j)\in\Z\times\N}$ of $\ZZ{5}$ generated by the ACA of weight $W=(2,1,1)$.}
\end{figure}

\begin{figure}
\begin{center}
\begin{tikzpicture}[scale=0.4]

\tetrabottombackleft{0}{0}{0}{4}
\tetrabottombackright{0}{12}{0}{4}
\tetrabottomfrontleft{12}{0}{0}{4}
\tetrabottomfrontright{12}{12}{0}{4}

\tetratopbackleft{0}{0}{12}{4}
\tetratopbackright{0}{12}{12}{4}
\tetratopfrontleft{12}{0}{12}{4}
\tetratopfrontright{12}{12}{12}{4}

\end{tikzpicture}
\caption{The eight possible orientations of a tetrahedron}\label{fig2}
\end{center}
\end{figure}

In this paper, we mainly consider simplices of dimension $n$ appearing in the orbit generated from an infinite array of $\ZZ{m}$ by an ACA of dimension $n-1$. Examples of triangles, for the four possible orientations in dimension $n=2$, are depicted in Figure~\ref{fig1}. In Figure~\ref{fig2}, for dimension $n=3$, the eight possible orientations of a tetrahedra are represented.

Let $M$ be a \textit{multiset} of $\ZZ{m}$, that is a set where each element of $\ZZ{m}$ can appear more than once. The \textit{multiplicity function} associated with $M$ is the integer-valued function $\m_{M} : \ZZ{m} \longmapsto \N$, which assigns to each element of $\ZZ{m}$ its multiplicity in $M$. The \textit{cardinality} of $M$, denoted by $|M|$, is the number of elements of $M$, counted with multiplicity, that is, $|M|=\sum_{x\in\ZZ{m}}\m_M(x)$.

\begin{definition}[Balanced multisets of $\ZZ{m}$]
A multiset of $\ZZ{m}$ is said to be \textit{balanced} if it contains all the elements of $\ZZ{m}$ with the same multiplicity, i.e., if its associated multiplicity function $\m_M$ is constant on $\ZZ{m}$, equal to $\frac{1}{m}|M|$.
\end{definition}

The goal of this paper is to prove the existence of balanced simplices appearing in certain orbits generated by ACA. Sufficient conditions for obtaining this result will be detailed throughout this paper. This notion of balanced simplices generated by ACA essentially appears in the literature in the case of the Pascal cellular automaton of dimension 1.

\begin{definition}[$\PCA{n}$]
The \textit{Pascal cellular automaton} of dimension $n$ is the ACA  of radius $r=1$ and whose weight array $W=(w_i)_{i\in[-1,1]^{n}}$ is defined by
$$
w_{i} = \left\{\begin{array}{cl}
1 & \text{if}\ i\in\left\{ 0_{\Z^{n}} , -e_1 , -e_2 , \ldots , -e_n \right\}, \\
0 & \text{otherwise},
\end{array}\right.
$$
where $(e_1,e_2,\ldots,e_n)$ is the canonical basis of the vector space $\Z^n$. It is denoted by $\PCA{n}$.
\end{definition}

For instance, $W=(1,1,0)$ for $\PCA{1}$ and $W=\left(\begin{array}{ccc}
 0 & 0 & 0 \\
 1 & 1 & 0 \\
 0 & 1 & 0 \\
\end{array}\right)$ for $\PCA{2}$.

\begin{remark}
Let $A=(a_{i})_{i\in\Z^{n-1}}$ be the $(n-1)$-dimensional array of $\ZZ{m}$ defined by $a_{i}=1$ for $i=0_{\Z^{n}}$ and $a_i=0$ otherwise. If $(a_{i,j})_{(i,j)\in\Z^{n-1}\times\N}$ is the orbit $\orb(A)$ generated by the $\PCA{n-1}$, then $a_{i,j}$ is the multinomial coefficient
$$
a_{i,j} = \binom{j}{i_1,\ldots,i_{n-1},j-\sum_{k=1}^{n-1}i_{k}} = \frac{j!}{i_1!\cdots i_{n-1}!\left(j-\sum_{k=1}^{n-1}i_{k}\right)!}
$$
for all $i\in\N^{n-1}$ such that $i_1+\cdots+i_{n-1}\le j$, and $a_{i,j}=0$ otherwise. Thus, we retrieve the coefficients of the Pascal $n$-simplex modulo $m$. This is the reason why this specific ACA is called the Pascal cellular automaton.
\end{remark}

Let $A=(a_i)_{i\in\Z}$ be a doubly infinite sequence of $\ZZ{m}$. We consider the orbit generated from $A$ by $\PCA{1}$, i.e., the infinite array $\orb(A)=(a_{i,j})_{(i,j)\in\Z\times\N}$ defined by $a_{i,0} = a_{i}$ for all $i\in\Z$ and
$$
a_{i,j} = a_{i-1,j-1} + a_{i,j-1}
$$
for all $(i,j)\in\Z\times\N^{*}$. Then, the $(-+)$-triangles and the $(+-)$-triangles appearing in a such orbit correspond with the Steinhaus triangles and the generalized Pascal triangles modulo $m$, respectively.

\subsection{Contents}

As already seen before, the proof of Theorem~\ref{chap2} in \cite{Chappelon2011} is based on the elementary object that is the arithmetic triangle and its main interest is that it is very often balanced. In this paper, we consider a generalization in higher dimensions of arithmetic triangles.

\begin{definition}[Arithmetic arrays and simplices]\label{defarith}
Let $n$ and $m$ be positive integers. Let $A=(a_{i})_{i\in\Z^{n}}$ be an array of $\ZZ{m}$. The array $A$ is said to be \textit{arithmetic} with first element $a$ and with common difference $d=(d_1,\ldots,d_n)\in(\ZZ{m})^n$ if
$$
a_{i} = a+i_1d_1+\cdots+i_nd_n,
$$
for all $i=(i_1,\ldots,i_n)\in\Z^n$. The arithmetic array with first element $a\in\ZZ{m}$ and with common difference $d\in(\ZZ{m})^{n}$ is denoted by $\ArA(a,d)$. The \textit{arithmetic simplex} of size $s$, with first element $a\in\ZZ{m}$ and with common difference $d=(d_1,\ldots,d_n)\in(\ZZ{m})^{n}$, is the simplex $\triangle(0_{\Z^{n}},+\cdots+,s)$ appearing in the array $\ArA(a,d)=(a_{i})_{i\in\Z^{n}}$ and is denoted by $\AS(a,d,s)$, that is,
$$
\AS(a,d,s) = \left\{ a + i_1d_1 + \cdots + i_nd_n\ \middle|\ i\in\N^n\ \text{such that}\ i_1+\cdots+i_n\le s-1\right\}.
$$
For $n=1$, the arithmetic progression $\AS(a,d,s)$ is also denoted by $\AP(a,d,s)$.
\end{definition}

\begin{remark}
The following multiset identities hold for arithmetic simplices:
$$
\AS(a,(d_1,\ldots,d_n),s)
\begin{array}[t]{l}
 = \displaystyle\AS(a+(s-1)d_1,(-d_1,d_2-d_1,\ldots,d_n-d_1),s) \\
 = \displaystyle\AS(a+(s-1)d_2,(d_1-d_2,-d_2,d_3-d_2,\ldots,d_n-d_2),s) \\
 \cdots\cdots \\
 = \displaystyle\AS(a+(s-1)d_n,(d_1-d_n,\ldots,d_{n-1}-d_n,-d_n),s),
\end{array}
$$
and $\AS(a,(d_1,\ldots,d_n),s) = \AS\left(a,\left(d_{\pi(1)},\ldots,d_{\pi(n)}\right),s\right)$, where $\pi$ is a permutation of $[1,n]$.
\end{remark}

\begin{figure}
\begin{center}
\begin{tikzpicture}[scale=0.5]

\draw (1,1) -- (1,6);
\draw (2,1) -- (2,6);
\draw (3,2) -- (3,6);
\draw (4,3) -- (4,6);
\draw (5,4) -- (5,6);
\draw (6,5) -- (6,6);
\draw (1,6) -- (6,6);
\draw (1,5) -- (6,5);
\draw (1,4) -- (5,4);
\draw (1,3) -- (4,3);
\draw (1,2) -- (3,2);
\draw (1,1) -- (2,1);

\node at (1.5,5.5) {$0$};
\node at (2.5,5.5) {$1$};
\node at (3.5,5.5) {$2$};
\node at (4.5,5.5) {$3$};
\node at (5.5,5.5) {$4$};
\node at (1.5,4.5) {$2$};
\node at (2.5,4.5) {$3$};
\node at (3.5,4.5) {$4$};
\node at (4.5,4.5) {$0$};
\node at (1.5,3.5) {$4$};
\node at (2.5,3.5) {$0$};
\node at (3.5,3.5) {$1$};
\node at (1.5,2.5) {$1$};
\node at (2.5,2.5) {$2$};
\node at (1.5,1.5) {$3$};

\draw (7,2) -- (7,6);
\draw (8,2) -- (8,6);
\draw (9,3) -- (9,6);
\draw (10,4) -- (10,6);
\draw (11,5) -- (11,6);
\draw (7,6) -- (11,6);
\draw (7,5) -- (11,5);
\draw (7,4) -- (10,4);
\draw (7,3) -- (9,3);
\draw (7,2) -- (8,2);

\node at (7.5,5.5) {$3$};
\node at (8.5,5.5) {$4$};
\node at (9.5,5.5) {$0$};
\node at (10.5,5.5) {$1$};
\node at (7.5,4.5) {$0$};
\node at (8.5,4.5) {$1$};
\node at (9.5,4.5) {$2$};
\node at (7.5,3.5) {$2$};
\node at (8.5,3.5) {$3$};
\node at (7.5,2.5) {$4$};

\draw (12,3) -- (12,6);
\draw (13,3) -- (13,6);
\draw (14,4) -- (14,6);
\draw (15,5) -- (15,6);
\draw (12,6) -- (15,6);
\draw (12,5) -- (15,5);
\draw (12,4) -- (14,4);
\draw (12,3) -- (13,3);

\node at (12.5,5.5) {$1$};
\node at (13.5,5.5) {$2$};
\node at (14.5,5.5) {$3$};
\node at (12.5,4.5) {$3$};
\node at (13.5,4.5) {$4$};
\node at (12.5,3.5) {$0$};

\draw (16,4) -- (16,6);
\draw (17,4) -- (17,6);
\draw (18,5) -- (18,6);
\draw (16,6) -- (18,6);
\draw (16,5) -- (18,5);
\draw (16,4) -- (17,4);

\node at (16.5,5.5) {$4$};
\node at (17.5,5.5) {$0$};
\node at (16.5,4.5) {$1$};

\draw (19,6) -- (20,6) -- (20,5) -- (19,5) -- (19,6);

\node at (19.5,5.5) {$2$};

\end{tikzpicture}
\end{center}
\caption{The arithmetic tetrahedron $\AS(0,(1,2,3),5)$ in $\ZZ{5}$}\label{fig4}
\end{figure}

For example, the arithmetic tetrahedron $\AS(0,(1,2,3),5)$ of $\ZZ{5}$ is depicted in Figure~\ref{fig4}. The successive rows of this tetrahedron are the arithmetic triangles $\AS(0,(1,2),5)$, $\AS(3,(1,2),4)$, $\AS(1,(1,2),3)$, $\AS(4,(1,2),2)$ and $\AS(2,(1,2),1)$ of $\ZZ{5}$.

\par Return now to the general case of an ACA of dimension $n-1$, with a weight array $W=(w_j)_{j\in[-r,r]^{n-1}}$ of radius $r\in\N$. Let us denote
$$
\sigma := \sum_{j\in[-r,r]^{n-1}}w_{j}\quad\text{and}\quad \sigma_{k} := \sum_{j\in[-r,r]^{n-1}}j_{k}w_{j},\ \text{for all}\ k\in[1,n-1].
$$
For any integers $a$ and $b$, we let $\gcd(a,b)$ and $\lcm(a,b)$ denote the greatest common divisor and the least common multiplicator of $a$ and $b$, respectively. Let $x\in\ZZ{m}$. We also let $\gcd(x,m)$ denote the greatest common divisor of $m$ and any representant of the residue class $x$.

Using properties of arithmetic simplices, the following theorem, which is the main result of this paper, will be proved.

\begin{theorem}\label{thm1}
Let $n\ge2$ and $m$ be two positive integers such that $\gcd(m,n!)=1$. Suppose that $\sigma$ is invertible modulo $m$. Let $a\in\ZZ{m}$, $d=(d_1,\ldots,d_{n-1})\in(\ZZ{m})^{n-1}$ and $\varepsilon\in\{-1,1\}^{n}$ such that $d_i$, for all $i\in[1,n]$, and $\varepsilon_jd_j-\varepsilon_id_i$, for all distinct integers $i,j\in[1,n]$, are invertible, where $d_n:=\sigma^{-1}\sum_{k=1}^{n-1}\sigma_kd_k$. Then, in the orbit $\orb(\ArA(a,d))$, every $n$-simplex with orientation $\varepsilon$ and of size $s$ is balanced, for all $s\equiv -t \bmod{\lcm(\ord_{m}(\sigma),m)}$, where $t\in[0,n-1]$.
\end{theorem}

\begin{remark}
For any integer $\sigma$ which is invertible modulo $m$, the identity $\lcm(\ord_m(\sigma),m)=\ord_m(\sigma^m)m$ holds. A complete study of this arithmetic function can be found in \cite{Chappelon2010}.
\end{remark}

For $n=2$, $m$ odd and $W=(1,1,0)$, the weight sequence of $\PCA{1}$, we retrieve Theorem~\ref{chap1}. Indeed, in this case, we have $\sigma=2$, $\sigma_1=-1$, $d_1=d$, $d_2=\sigma^{-1}\sigma_1d_1=-2^{-1}d$ and $\varepsilon_1d_1-\varepsilon_2d_2 = \pm 2^{-1}d$ for $\varepsilon=(\pm 1,\mp 1)$, which are invertibles of $\ZZ{m}$.

In the special case of $\PCA{n-1}$, Theorem~\ref{thm1} gives a positive answer to the equivalent problem of the weak Molluzzo problem, in higher dimensions, for an infinite number of values $m$.

\begin{corollary}
Let $n\ge 2$ be a positive integer. For every positive integer $m$ such that $\gcd(m,(3(n-1))!)=1$, there exist infinitely many balanced $n$-simplices of $\ZZ{m}$ generated by $\PCA{n-1}$, for all possible orientations $\varepsilon\in\{-1,1\}^ n$. In the special case of the two orientations $\varepsilon=+\cdots+-$ or $\varepsilon=-\cdots-+$, the existence of an infinite number of such balanced simplices is verified for every $m$ such that $\gcd(m,n!)=1$ for $n$ even and for every $m$ such that $\gcd\left(m,\left(\frac{3n+1}{2}\right)!\right)=1$ for $n$ odd.
\end{corollary}

This paper is organized as follows. After giving some basic results on balanced simplices and orbits of arithmetic arrays generated by ACA in Section~2, we study, in Section~3, arithmetic simplices and we give some sufficient conditions on them to be balanced, for any dimension $n\ge2$. Moreover, in dimension 2 and 3, we also provide necessary conditions on arithmetic triangles and arithmetic tetrahedra for being balanced. This leads to the proof of Theorem~\ref{thm1} in Section~4. Moreover, using the specificities on balanced arithmetic tetrahedra in dimension 2, highlighted in Section~3, we complete Theorem~\ref{thm1} for balanced tetrahedra. In Section~5, we consider the special case where simplices have the additional geometric property of being composed of antisymmetric sequences. This permits us to obtain more results for ACA of dimension 1 generating balanced triangles. Finally, the problem of determining the existence of balanced triangles and tetrahedra generated by $\PCA{1}$ and $\PCA{2}$, for the remaining open cases, is posed in the last section.

\section{Preliminaries}

We begin this section with the terminology on simplices that we will use in the sequel.

\begin{definition}[Vertices, edges, facets and rows of simplices]\label{defverfac}
Let $A=(a_i)_{i\in\Z^n}$ be an infinite array of dimension $n$ of $\ZZ{m}$. Let $\triangle=\triangle(j,\varepsilon,s)$ be the $n$-simplex of size $s$ of $\ZZ{m}$ with principal vertex at position $j\in\Z^n$ in $A$ and with orientation $\varepsilon\in\{-1,1\}^n$. Let $(e_1,e_2,\ldots,e_n)$ denote the canonical basis of the vector space $\Z^n$ and let $e_0:=0_{\Z^n}$.
The $n+1$ \textit{vertices} $\V_0,\ldots,\V_{n}$ of $\triangle$ are defined by $\V_k(\triangle) := a_{j + (s-1)\varepsilon\cdot e_k}$ for all $k\in[0,n]$, where $\V_0(\triangle) := a_{j}$ (principal vertex) and
$$
\V_k(\triangle) := a_{j + \varepsilon\cdot (s-1)e_k} = a_{j_1,\ldots,j_{k-1},j_{k}+\varepsilon_k(s-1),j_{k+1},\ldots,j_n},
$$
for all $k\in[1,n]$. The $\binom{n+1}{2}$ \textit{edges} $\E_{k,l}$ of $\triangle$ are sequences of length $s$ defined by
$$
\E_{k,l}(\triangle) 
\begin{array}[t]{l}
:= \left\{ a_{j + \varepsilon\cdot ((s-1-x)e_k + xe_l)}\ \middle|\ x\in[0,s-1] \right\} \\[2ex]
\ = \left\{ a_{j + \varepsilon\cdot (s-1)e_k)} , a_{j + \varepsilon\cdot ((s-2)e_k + e_l)} , a_{j + \varepsilon\cdot ((s-3)e_k + 2e_l)} , \ldots , a_{j + \varepsilon\cdot (s-1)e_l} \right\},
\end{array}
$$
for all distinct integers $k,l\in[0,n]$. The $n+1$ \textit{facets} $\F_0,\ldots,\F_n$ of $\triangle$ are the $(n-1)$-simplices of size $s$ defined by
$$
\F_k(\triangle) := \displaystyle\left\{ a_{j+\varepsilon\cdot l}\ \middle|\ l\in\N^n \ \text{such that}\ l_k=0\ \text{and}\ l_1+\cdots+l_n \le s-1 \right\},
$$
for all $l\in[1,n]$ and
$$
\F_0(\triangle) := \left\{ a_{j+\varepsilon\cdot l}\ \middle|\ l\in\N^n\ \text{such that}\ l_1+\cdots+l_n = s-1 \right\}.
$$
For every $k\in[0,s-1]$, the $k$th \textit{row} of $\triangle$ is the $(n-1)$-simplex of size $s-k$ defined by
$$
\R_k(\triangle) := \left\{ a_{j+\varepsilon\cdot l} \ \middle|\ l\in\N^n \ \text{such that}\ l_n=k\ \text{and}\ l_1+\cdots+l_{n-1} \le s-k-1\right\}.
$$
\end{definition}

\subsection{Sizes of balanced simplices}

In this subsection, the admissible sizes of balanced simplices are studied. First, the cardinality of an $n$-simplex of size $s$ is determined.

\begin{proposition}\label{propsize}
Let $\triangle$ be an $n$-simplex of size $s$ appearing in an $n$-dimensional array. Then, the multiset cardinality of $\triangle$ is $\left|\triangle\right| = \binom{s+n-1}{n}$.
\end{proposition}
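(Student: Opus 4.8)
The plan is to reduce the statement to a standard lattice-point count and then apply the stars-and-bars formula. First I would observe that, directly from the definition of $\triangle(j,\varepsilon,s)$, the assignment $k\mapsto a_{j+\varepsilon\cdot k}$ places exactly one element into the multiset for each admissible index tuple. Hence the multiset cardinality $|\triangle|$, which counts elements \emph{with multiplicity}, is precisely the number of tuples $k=(k_1,\ldots,k_n)\in\N^n$ satisfying $k_1+\cdots+k_n\le s-1$. In particular the actual values of the $a_{j+\varepsilon\cdot k}$, and any coincidences among them, are irrelevant; the problem becomes purely combinatorial.

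Second, I would turn the inequality into an equality by introducing a slack coordinate $k_{n+1}:=(s-1)-(k_1+\cdots+k_n)$, which lies in $\N$ and is uniquely determined by $(k_1,\ldots,k_n)$. This sets up a bijection between the admissible tuples and the non-negative integer solutions of $k_1+\cdots+k_{n+1}=s-1$. By the classical stars-and-bars count, the number of such solutions is $\binom{(s-1)+(n+1)-1}{(n+1)-1}=\binom{s+n-1}{n}$, which is exactly the claimed cardinality.

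As an alternative I would note that one can argue inductively via the row decomposition of Definition~\ref{defverfac}: since the $k$th row of an $n$-simplex of size $s$ is an $(n-1)$-simplex of size $s-k$, one gets $|\triangle|=\sum_{k=0}^{s-1}\binom{(s-k)+n-2}{n-1}$, and after reindexing the hockey-stick identity collapses this to $\binom{s+n-1}{n}$; the base case $n=1$ merely records that a $1$-simplex of size $s$ has $s=\binom{s}{1}$ elements.

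I do not anticipate any genuine obstacle here, as the identity is elementary. The only point deserving a word of care is the reduction in the first paragraph, namely that the cardinality of the multiset equals the number of index tuples rather than the number of distinct values; this is immediate because the multiset is by definition indexed by exactly those tuples, each contributing multiplicity one to the total count.
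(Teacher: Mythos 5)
Your proposal is correct. Your primary argument --- identifying $|\triangle|$ with the number of index tuples $k\in\N^n$ satisfying $k_1+\cdots+k_n\le s-1$ and then counting these by a slack-variable bijection with the solutions of $k_1+\cdots+k_{n+1}=s-1$ via stars and bars --- is a genuinely different route from the paper, which instead proceeds by induction on $n$: it slices $\triangle$ into its rows $\R_0,\ldots,\R_{s-1}$, invokes the induction hypothesis that each $\R_k$ (an $(n-1)$-simplex of size $s-k$) has cardinality $\binom{s-k+n-2}{n-1}$, and sums using the hockey-stick identity $\sum_{k=n-1}^{s+n-2}\binom{k}{n-1}=\binom{s+n-1}{n}$. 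That inductive argument is precisely the ``alternative'' you sketch in your third paragraph, so you have in effect given both proofs. The direct bijective count is more self-contained and avoids any appeal to the row structure; the paper's version has the minor advantage of exercising the row decomposition of Definition~\ref{defverfac}, which is used repeatedly later (e.g.\ in Proposition~\ref{propsub} and Lemma~\ref{prop1}). Your opening remark --- that the multiset is indexed by the admissible tuples, each contributing one to the count regardless of coincidences of values --- is the right point to make explicit and matches the paper's implicit use of multiset cardinality.
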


\begin{proof}
By induction on $n$. For $n=1$, $\triangle$ is a finite sequence of length $s$. Thus, $|\triangle|=s=\binom{s}{1}$. Suppose that every $(n-1)$-simplex of size $k$ has cardinality of $\binom{k+n-2}{n-1}$, for all integers $k\ge1$. Now, let $\triangle$ be an $n$-simplex of size $s$. Since, for all $k\in[0,s-1]$, the $k$th row $\R_k$ of $\triangle$ is an $(n-1)$-simplex of size $s-k$, it follows that
$$
|\triangle| = \sum_{k=0}^{s-1}|\R_k| = \sum_{k=0}^{s-1}\binom{s-k+n-2}{n-1} = \sum_{k=n-1}^{s+n-2}\binom{k}{n-1} = \binom{s+n-1}{n}.
$$
This completes the proof.
\end{proof}

The divisibility of $\binom{s+n-1}{n}$ by $m$ is obviously a necessary condition for having a balanced $n$-simplex of $\ZZ{m}$ of size $s$. When $m$ is a composite number, to give all the sizes $s$ for which the binomial $\binom{s+n-1}{n}$ is divisible by $m$ is tedious and not really important here because the results that we obtain in this paper are only for some of them, not for all the admissible sizes. Nevertheless, we can see that the sizes involved in Theorem~\ref{thm1} are admissible for this problem.

\begin{proposition}
Let $n,p,k,s$ be positive integers such that $p$ is prime and $p>n\ge2$. Then, the binomial coefficient $\binom{s+n-1}{n}$ is divisible by $p^k$ if and only if $s\equiv -t\bmod{p^k}$ for $t\in[0,n-1]$.
\end{proposition}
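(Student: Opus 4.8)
The plan is to analyze the $p$-adic valuation $\nu_p\binom{s+n-1}{n}$ and show it equals $k$ (or more) precisely on the claimed residue classes. Writing the binomial as
$$
\binom{s+n-1}{n} = \frac{(s+n-1)!}{n!\,(s-1)!} = \frac{1}{n!}\prod_{j=0}^{n-1}(s+j),
$$
the key observation is that, since $p>n$, the factor $n!$ in the denominator is coprime to $p$, so $\nu_p\binom{s+n-1}{n} = \nu_p\!\left(\prod_{j=0}^{n-1}(s+j)\right)$. Thus the $p$-part of the binomial is governed entirely by the numerator product of $n$ consecutive integers $s, s+1, \ldots, s+n-1$.

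Next I would show that among any $n$ consecutive integers with $n<p$, at most one is divisible by $p$. Indeed, if two of them, say $s+i$ and $s+j$ with $0\le i<j\le n-1$, were both divisible by $p$, then $p$ would divide their difference $j-i$, which lies in $[1,n-1]$; this is impossible since $p>n>n-1$. Consequently
$$
\nu_p\!\left(\prod_{j=0}^{n-1}(s+j)\right) = \nu_p(s+t_0),
$$
where $s+t_0$ is the unique multiple of $p$ in the product if one exists (with the valuation being $0$ if none exists). Hence the whole $p$-adic valuation of the binomial collapses to the valuation of a single factor.

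With this reduction, the divisibility by $p^k$ becomes transparent. The binomial is divisible by $p^k$ if and only if $\nu_p(s+t_0)\ge k$ for some $t_0\in[0,n-1]$, i.e., if and only if there exists $t\in[0,n-1]$ with $s+t\equiv 0 \bmod{p^k}$, which is exactly the condition $s\equiv -t\bmod{p^k}$ for some $t\in[0,n-1]$. For the converse direction I would note that these $n$ residue classes $-t\bmod{p^k}$, $t\in[0,n-1]$, are distinct (since $n\le p\le p^k$), so the condition is genuinely a restriction, and conversely any $s$ in one of these classes makes the corresponding factor $s+t$ divisible by $p^k$, forcing $p^k\mid\binom{s+n-1}{n}$.

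The main obstacle, if any, is purely bookkeeping: one must be careful that the unique multiple of $p$ among the $n$ consecutive integers may be divisible by a high power of $p$ (so that a single residue class $-t\bmod{p^k}$ can correspond to that factor carrying all the valuation), and that the claimed equivalence is an "if and only if" rather than merely a sufficient condition. The coprimality of $n!$ to $p$ from $p>n$ is what makes the argument clean, and the spacing argument ($p>n$ prevents two multiples of $p$ among $n$ consecutive integers) is the real content; both follow directly from the hypothesis $p>n\ge2$.
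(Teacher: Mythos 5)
Your argument is correct and is essentially the same as the paper's: both reduce the question to the product $s(s+1)\cdots(s+n-1)$ using $\gcd(n!,p)=1$, and both use the spacing observation that $p>n$ forces at most one factor among $n$ consecutive integers to be divisible by $p$, so that $p^k$ divides the product if and only if it divides a single factor $s+t$. The only difference is cosmetic: you phrase the reduction via the $p$-adic valuation, while the paper works directly with congruences modulo $p^k$.
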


\begin{proof}
Since $p^k$ and $n!$ are relatively prime, we have
$$
\binom{s+n-1}{n} \equiv 0 \pmod{p^k}
\begin{array}[t]{cl}
 \Longleftrightarrow & \displaystyle(s+n-1)(s+n-2)\cdots s \equiv 0 \pmod{p^k}.
\end{array}
$$
Moreover, $n<p$ implies that $p$ can divide at most one factor of $(s+n-1)(s+n-2)\cdots s$. Therefore, $\binom{s+n-1}{n}$ is divisible by $p^k$ if and only if $s+t\equiv0\pmod{p^k}$ for some $t\in[0,n-1]$. This concludes the proof.
\end{proof}

\begin{proposition}
Let $m$ and $n$ be two positive integers such that $\gcd(m,n!)=1$ and let $s$ be a positive integer such that $s\equiv -t\bmod{m}$, where $t\in[0,n-1]$. Then, the binomial coefficient $\binom{s+n-1}{n}$ is divisible by $m$.
\end{proposition}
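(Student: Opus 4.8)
The plan is to compare $\binom{s+n-1}{n}$ with the product of $n$ consecutive integers it encodes, and then to use the coprimality hypothesis $\gcd(m,n!)=1$ to transfer divisibility from that product to the binomial coefficient itself. First I would clear the denominator and write
$$
n!\binom{s+n-1}{n} = \prod_{i=0}^{n-1}(s+i) = (s+n-1)(s+n-2)\cdots(s+1)s,
$$
exhibiting the left-hand side as the product of the $n$ consecutive integers $s,s+1,\ldots,s+n-1$.

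Next I would locate a factor of this product that is divisible by $m$. By hypothesis $s\equiv -t\bmod m$ for some $t\in[0,n-1]$, so $s+t\equiv 0\bmod m$. The bound $t\in[0,n-1]$ is used precisely here: it guarantees that $s+t$ is the factor with index $i=t$ in the product above, and therefore $m$ divides $\prod_{i=0}^{n-1}(s+i)=n!\binom{s+n-1}{n}$.

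Finally, I would invoke Gauss's lemma: from $m \mid n!\binom{s+n-1}{n}$ together with $\gcd(m,n!)=1$, it follows that $m\mid\binom{s+n-1}{n}$, which is the desired conclusion. The argument is essentially immediate and there is no genuine obstacle; the only subtle point is this last step, where the coprimality of $m$ and $n!$ is exactly what allows one to cancel the factor $n!$ without losing divisibility by $m$.
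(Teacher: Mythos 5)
Your proposal is correct and follows essentially the same route as the paper: both identify the factor $s+t\equiv 0\bmod m$ inside the product $(s+n-1)\cdots(s+1)s = n!\binom{s+n-1}{n}$ and then use $\gcd(m,n!)=1$ together with Gauss's lemma to cancel $n!$. Your write-up is in fact slightly more explicit about why $t\in[0,n-1]$ places the divisible factor inside the product, but the argument is the same.
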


\begin{proof}
If $s\equiv -t\bmod{m}$, then $(s+n-1)(s+n-2)\cdots(s+1)s$ is divisible by $m$. Finally, since $\gcd(m,n!)=1$, we deduce from Gauss Lemma that $m$ divides $\frac{(s+n-1)(s+n-2)\cdots(s+1)s}{n!}=\binom{s+n-1}{n}$. This concludes the proof.
\end{proof}

\subsection{Projection Theorem}

The result presented in this subsection is useful for proving, by induction on $m$, that multisets of $\ZZ{m}$ are balanced.

For any divisor $\alpha$ of the positive integer $m$, let $\pi_{\alpha}$ denote the canonical projective map $\pi_{\alpha} : \ZZ{m} \longrightarrow \ZZ{\alpha}$. For any multiset $M$ of $\ZZ{m}$, its projection $\pi_{\alpha}(M)$ into $\ZZ{\alpha}$ is the multiset of $\ZZ{\alpha}$ defined by $\m_{\pi_{\alpha}(M)}(y)=\sum_{x\in\pi_{\alpha}^{-1}(\{y\})}\m_M(x)$ for all $y\in\ZZ{\alpha}$.

\begin{theorem}\label{thmproj}
Let $m$ and $\alpha$ be two positive integers such that $m$ is divisible by $\alpha$ and let $M$ be a finite multiset of elements in $\ZZ{m}$. Then, the multiset $M$ is balanced in $\ZZ{m}$ if and only if its projection $\pi_{\alpha}(M)$ is balanced in $\ZZ{\alpha}$ and $\m_M(x+\alpha)=\m_M(x)$ for all $x\in\ZZ{m}$.
\end{theorem}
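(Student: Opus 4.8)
The plan is to exploit the fiber structure of the projection $\pi_\alpha$. Since $\alpha$ divides $m$, set $q := m/\alpha$; then for each $y\in\ZZ{\alpha}$ the fiber $\pi_\alpha^{-1}(y)$ is exactly the progression $\{\tilde y,\tilde y+\alpha,\ldots,\tilde y+(q-1)\alpha\}$ in $\ZZ{m}$, where $\tilde y$ denotes the representative of $y$ in $[0,\alpha-1]$. In particular every fiber has cardinality $q$, and the defining formula for the projected multiplicity becomes $\m_{\pi_\alpha(M)}(y)=\sum_{j=0}^{q-1}\m_M(\tilde y+j\alpha)$. I would record once and for all that $\left|\pi_\alpha(M)\right|=|M|$, since projecting only redistributes multiplicities among fibers and does not change the total count.

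The first real step is to reinterpret the periodicity hypothesis. I claim that the condition $\m_M(x+\alpha)=\m_M(x)$ for all $x\in\ZZ{m}$ holds if and only if $\m_M$ is constant on each fiber $\pi_\alpha^{-1}(y)$. Indeed, iterating the identity gives $\m_M(\tilde y)=\m_M(\tilde y+\alpha)=\cdots=\m_M(\tilde y+(q-1)\alpha)$, and conversely constancy on fibers returns the translation invariance, since $x$ and $x+\alpha$ always lie in a common fiber.

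For the forward implication, assume $M$ is balanced, so $\m_M\equiv |M|/m$. The translation invariance is then immediate. Moreover each fiber sum equals $q\cdot|M|/m=|M|/\alpha$, which is precisely the constant value $\left|\pi_\alpha(M)\right|/\alpha$, so $\pi_\alpha(M)$ is balanced. For the converse, assume that $\pi_\alpha(M)$ is balanced and that the translation invariance holds. By the first step $\m_M$ is constant on each fiber, so the fiber sum over $\pi_\alpha^{-1}(y)$ equals $q\cdot\m_M(\tilde y)$; balance of the projection forces this to equal $|M|/\alpha$ for every $y$, whence $\m_M(\tilde y)=|M|/(q\alpha)=|M|/m$ for every representative $\tilde y\in[0,\alpha-1]$. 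Translation invariance then propagates this single constant value to all of $\ZZ{m}$, and $M$ is balanced.

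There is no genuine obstacle here; the argument is a direct bookkeeping of multiplicities along the fibers of $\pi_\alpha$. The only point requiring a little care is to keep the two hypotheses separate in the converse: balance of the projection alone controls the fiber sums but not the distribution of multiplicities within a fiber, which is exactly why the periodicity condition is indispensable and cannot be dropped.
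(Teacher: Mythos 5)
Your proof is correct and follows essentially the same route as the paper: both arguments rest on the fiber identity $\m_{\pi_{\alpha}(M)}(\pi_{\alpha}(x)) = \sum_{\lambda=0}^{m/\alpha-1}\m_{M}(x+\lambda \alpha)$, from which the equivalence is read off. The paper states this identity and concludes in one line; you simply spell out the bookkeeping (periodicity equals constancy on fibers, then both implications) in full.
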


\begin{proof}
Since $\pi_{\alpha}^{-1}(\{\pi_{\alpha}(x)\})=\left\{x+\lambda \alpha\ \middle|\ \lambda\in\left[0,\frac{m}{\alpha}-1\right]\right\}$ for all $x\in\ZZ{m}$, we obtain that
$
\m_{\pi_{\alpha}(M)}(\pi_{\alpha}(x)) = \sum_{\lambda=0}^{\frac{m}{\alpha}-1}\m_{M}(x+\lambda \alpha)
$
and the result follows.
\end{proof}

\subsection{Orbits of arithmetic arrays}

In this subsection, the orbits of arithmetic arrays are studied in detail. Let $n\ge2$ be a positive integer. First, we show that the arithmetic structure is preserved under the action of $\partial$ for any weight array $W=(w_i)_{i\in[-r,r]^{n-1}}$, of radius $r\in\N$.

\begin{proposition}\label{derivarith}
Let $a\in\ZZ{m}$ and let $d=(d_1,\ldots,d_{n-1})\in{\left(\ZZ{m}\right)}^{n-1}$. Then,
$$
\partial\ArA(a,d) = \ArA\left(\sigma a + \sum_{k=1}^{n-1}\sigma_kd_k, \sigma d\right),
$$
where $\sigma$ and $\sigma_k$ are the coefficients
$$
\sigma := \sum_{j\in[-r,r]^{n-1}}w_{j},\quad \sigma_{k} := \sum_{j\in[-r,r]^{n-1}}j_{k}w_{j},\ \text{for all}\ k\in[1,n-1].
$$
\end{proposition}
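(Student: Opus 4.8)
The plan is to verify the identity by a direct component-wise computation, exploiting the linearity of $\partial$ together with the fact that $\ArA(a,d)$ is an affine function of the index. First I would fix an index $i\in\Z^{n-1}$ and write out the $i$th entry of $\partial\ArA(a,d)$ straight from the definition of the ACA, namely $\sum_{j\in[-r,r]^{n-1}}w_j a_{i+j}$, where $(a_i)_{i\in\Z^{n-1}}=\ArA(a,d)$.

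Next I would substitute the explicit arithmetic formula $a_{i+j}=a+\sum_{k=1}^{n-1}(i_k+j_k)d_k$ into this sum and expand. Distributing $w_j$ and interchanging the (finite) order of summation over $j$ and $k$ splits the expression into three pieces: a constant piece $a\sum_j w_j$, a piece $\sum_k i_k d_k\sum_j w_j$ carrying the dependence on $i$, and a piece $\sum_k d_k\sum_j j_k w_j$ coming from the weight offsets $j_k$. By the definitions of $\sigma$ and $\sigma_k$, these are exactly $\sigma a$, $\sum_k i_k(\sigma d_k)$, and $\sum_k\sigma_k d_k$, respectively.

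Collecting the three pieces yields, for every $i$,
$$
\bigl(\partial\ArA(a,d)\bigr)_i = \Bigl(\sigma a+\sum_{k=1}^{n-1}\sigma_k d_k\Bigr)+\sum_{k=1}^{n-1}i_k(\sigma d_k),
$$
which I recognise as the $i$th entry of the arithmetic array with first element $\sigma a+\sum_{k=1}^{n-1}\sigma_k d_k$ and common difference $\sigma d=(\sigma d_1,\ldots,\sigma d_{n-1})$. Since this holds for all $i$, the two arrays coincide, which is the claimed identity.

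There is no genuine obstacle here: the statement is essentially the observation that an ACA, being a linear operator on arrays, carries an affine function of the index to an affine function of the index. The only point requiring a little care is the bookkeeping, namely correctly separating the contribution of the running index $i$ from that of the weight offsets $j$, so that the $i$-dependent part reproduces the common difference $\sigma d$ while the $j$-dependent part collapses into the constant correction $\sum_{k=1}^{n-1}\sigma_k d_k$.
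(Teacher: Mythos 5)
Your proof is correct and follows exactly the same route as the paper: fix an index $i$, substitute the affine formula $a_{i+j}=a+\sum_{k}(i_k+j_k)d_k$ into the defining sum $\sum_j w_j a_{i+j}$, and regroup to obtain the constant term $\sigma a+\sum_k\sigma_k d_k$ plus the term $\sum_k i_k(\sigma d_k)$. Nothing is missing.
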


\begin{proof}
Let $\ArA(a,d)=\left(a_{i}\right)_{i\in\Z^{n-1}}$ and $\partial\ArA(a,d)=\left(b_{i}\right)_{i\in\Z^{n-1}}$. By definition of $\partial$, for every $i\in\Z^{n-1}$, we have
$$
b_{i} = \hspace{-14pt}\sum_{j\in[-r,r]^{n-1}}\hspace{-14pt} w_{j}a_{i+j} = \hspace{-14pt}\sum_{j\in[-r,r]^{n-1}}\hspace{-14pt} w_{j}\left(a+\sum_{k=1}^{n-1}(i_k+j_k)d_k\right) = \left(\sigma a + \sum_{k=1}^{n-1}\sigma_k d_k\right) + \sum_{k=1}^{n-1}i_k(\sigma d_k).
$$
The result follows.
\end{proof}

\begin{proposition}\label{cor1}
Let $a\in\ZZ{m}$ and let $d=(d_1,\ldots,d_{n-1})\in{\left(\ZZ{m}\right)}^{n-1}$. Then,
$$
\partial^{i}\ArA(a,d) = \ArA\left(\sigma^{i}a+i\sigma^{i-1}\sum_{k=1}^{n-1}\sigma_k d_k,\sigma^{i} d\right),
$$
for all $i\in\N$.
\end{proposition}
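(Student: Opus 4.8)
The plan is to argue by induction on $i$, using Proposition~\ref{derivarith} as the single-step engine. The base case is $i=1$, for which the claimed formula reads $\ArA\bigl(\sigma a + \sum_{k=1}^{n-1}\sigma_k d_k, \sigma d\bigr)$, since $\sigma^{0}=1$ and the factor $i\sigma^{i-1}$ equals $1$; this is precisely the content of Proposition~\ref{derivarith}. The case $i=0$ is the trivial identity $\partial^{0}\ArA(a,d)=\ArA(a,d)$, consistent with the stated formula once one reads the vanishing coefficient $i\sigma^{i-1}=0$ at $i=0$ as killing the potentially undefined factor $\sigma^{-1}$.

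For the inductive step, I would assume the formula for a given $i\ge1$ and write $\partial^{i+1}\ArA(a,d)=\partial\bigl(\partial^{i}\ArA(a,d)\bigr)$. By the induction hypothesis the inner array is the arithmetic array $\ArA(a',d')$ with first element $a'=\sigma^{i}a+i\sigma^{i-1}\sum_{k=1}^{n-1}\sigma_k d_k$ and common difference $d'=\sigma^{i}d$, so that $d'_k=\sigma^{i}d_k$ for each $k$. Applying Proposition~\ref{derivarith} to $\ArA(a',d')$ then yields a new arithmetic array whose common difference is $\sigma d'=\sigma^{i+1}d$, exactly as required, and whose first element is $\sigma a'+\sum_{k=1}^{n-1}\sigma_k d'_k$.

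The only computation to carry out is the simplification of this new first element. Substituting the expressions for $a'$ and $d'_k$ gives
$$
\sigma a' + \sum_{k=1}^{n-1}\sigma_k d'_k = \sigma^{i+1}a + i\sigma^{i}\sum_{k=1}^{n-1}\sigma_k d_k + \sigma^{i}\sum_{k=1}^{n-1}\sigma_k d_k = \sigma^{i+1}a + (i+1)\sigma^{i}\sum_{k=1}^{n-1}\sigma_k d_k,
$$
which matches the claimed expression with $i$ replaced by $i+1$, completing the induction. There is no genuine obstacle: the argument is a routine induction, and the single arithmetic identity above — collecting the two copies of $\sigma^{i}\sum_{k}\sigma_k d_k$ into $(i+1)\sigma^{i}\sum_{k}\sigma_k d_k$ — is the entire substance of the proof.
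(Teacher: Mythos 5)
Your proof is correct and follows essentially the same route as the paper: induction on $i$ with Proposition~\ref{derivarith} as the single-step engine, and the same collection of the two copies of $\sigma^{i}\sum_{k}\sigma_k d_k$ into $(i+1)\sigma^{i}\sum_{k}\sigma_k d_k$. The only cosmetic difference is that you step from $i$ to $i+1$ while the paper steps from $i-1$ to $i$, and you add a careful remark about reading $i\sigma^{i-1}$ at $i=0$; neither changes the substance.
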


\begin{proof}
By induction on $i\in\N$. For $i=0$, we retrieve that $\partial^0\ArA(a,d)=\ArA(a,d)$. For $i\geq 1$, by the recursive definition of $\partial^i$ and Proposition~\ref{derivarith}, we obtain that
$$
\begin{array}[t]{l}
\partial^i\ArA(a,d) = \partial\left(\partial^{i-1}\ArA(a,d)\right) \\
 \ \\
 = \displaystyle\partial\left( \ArA\left(\sigma^{i-1}a+(i-1)\sigma^{i-2}\sum_{k=1}^{n-1}\sigma_k d_k,\sigma^{i-1} d\right)  \right) \\
 \ \\
 = \displaystyle\ArA\left( \sigma\left(\sigma^{i-1}a+(i-1)\sigma^{i-2}\sum_{k=1}^{n-1}\sigma_k d_k\right) + \sum_{k=1}^{n-1}\sigma_k \left(\sigma^{i-1}d_k\right), \sigma\left(\sigma^{i-1} d\right)\right) \\
 \ \\
 = \displaystyle\ArA\left( \sigma^{i}a+ i\sigma^{i-1}\sum_{k=1}^{n-1}\sigma_k d_k , \sigma^{i} d\right).
\end{array}
$$
This concludes the proof.
\end{proof}

Thus, the elements of the orbit of an arithmetic array $\ArA(a,d)$ are entirely determined in function of $a$, $d$, $\sigma$ and $\sigma_k$ for all $k\in[1,n-1]$.

\begin{proposition}\label{prop4}
Let $a\in\ZZ{m}$ and let $d\in{\left(\ZZ{m}\right)}^{n-1}$. Let $\orb(\ArA(a,d))=\left(a_{i}\right)_{i\in\Z^{n-1}\times\N}$ be the orbit of the arithmetic array $\ArA(a,d)$. Then,
$$
a_{i}  = \left(\partial^{i_{n}}\ArA(a,d)\right)_{i_1,\ldots,i_{n-1}}  = \displaystyle\sigma^{i_{n}}\left( a + \sum_{k=1}^{n} i_k d_k \right),
$$
for all $i\in\Z^{n-1}\times\N$, where $d_n:=\sigma^{-1}\sum_{k=1}^{n-1}\sigma_k d_k$.
\end{proposition}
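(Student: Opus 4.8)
The plan is to establish the formula directly by combining Proposition~\ref{cor1} with the definition of the arithmetic array, handling the index $i_n$ (the ``time'' coordinate, i.e.\ the number of applications of $\partial$) separately from the spatial indices $i_1,\ldots,i_{n-1}$. The point is that $a_i$ for $i = (i_1,\ldots,i_{n-1},i_n)$ is, by the very definition of the orbit as an $(n)$-dimensional array, the $(i_1,\ldots,i_{n-1})$-entry of the $i_n$th row, and that row is exactly $\partial^{i_n}\ArA(a,d)$.

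First I would invoke Proposition~\ref{cor1} to write the $i_n$th iterate explicitly as
$$
\partial^{i_n}\ArA(a,d) = \ArA\left(\sigma^{i_n}a + i_n\sigma^{i_n-1}\sum_{k=1}^{n-1}\sigma_k d_k,\ \sigma^{i_n}d\right).
$$
This is again an arithmetic array, now with first element $a' := \sigma^{i_n}a + i_n\sigma^{i_n-1}\sum_{k=1}^{n-1}\sigma_k d_k$ and common difference $d' := \sigma^{i_n}d \in (\ZZ{m})^{n-1}$. By Definition~\ref{defarith}, its $(i_1,\ldots,i_{n-1})$-entry is simply $a' + \sum_{k=1}^{n-1} i_k d'_k$. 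Substituting the values of $a'$ and $d'_k = \sigma^{i_n}d_k$ gives
$$
a_i = \sigma^{i_n}a + i_n\sigma^{i_n-1}\sum_{k=1}^{n-1}\sigma_k d_k + \sum_{k=1}^{n-1} i_k \sigma^{i_n} d_k.
$$

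The remaining step is purely cosmetic: I would factor out $\sigma^{i_n}$ from every term. The middle term becomes $i_n\sigma^{i_n}\bigl(\sigma^{-1}\sum_{k=1}^{n-1}\sigma_k d_k\bigr) = i_n\sigma^{i_n}d_n$, using the abbreviation $d_n := \sigma^{-1}\sum_{k=1}^{n-1}\sigma_k d_k$ introduced in the statement (this is where invertibility of $\sigma$, a standing hypothesis, is silently used so that $\sigma^{-1}$ makes sense). Collecting the spatial sum $\sum_{k=1}^{n-1}i_k d_k$ together with this $i_n d_n$ term yields $\sum_{k=1}^{n} i_k d_k$, so that
$$
a_i = \sigma^{i_n}\left(a + \sum_{k=1}^{n} i_k d_k\right),
$$
as claimed.

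I do not expect any real obstacle here: the statement is essentially a bookkeeping reformulation of Proposition~\ref{cor1}, repackaging the time-dependence of the first element into a single extra ``difference'' coordinate $d_n$. The only subtlety worth flagging explicitly is the identification of the orbit entry $a_i$ with the spatial entry of $\partial^{i_n}\ArA(a,d)$ — this is immediate from the Orbit definition, where the $j$th row $R_j$ was defined to be $\partial^j(A)$ — and the implicit use of $\sigma^{-1}$, which requires $\sigma$ to be invertible modulo $m$.
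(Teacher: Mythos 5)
Your proposal is correct and matches the paper's approach exactly: the paper's proof is simply ``Directly comes from Proposition~\ref{cor1},'' and your write-up supplies precisely the intended computation (read off the $(i_1,\ldots,i_{n-1})$-entry of $\partial^{i_n}\ArA(a,d)$ from Proposition~\ref{cor1} and factor out $\sigma^{i_n}$ using the invertibility of $\sigma$). Your explicit flagging of where $\sigma^{-1}$ is needed is a reasonable addition but does not change the argument.
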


\begin{proof}
Directly comes from Proposition~\ref{cor1}.
\end{proof}

\begin{remark}\label{rem1}
For $\orb(\ArA(a,d))=\left(a_{i}\right)_{i\in\Z^{n-1}\times\N}$ and for every $(i_1,\ldots,i_{n-1})\in\Z^{n-1}$, the sequence $\left( a_{i_1,\ldots,i_{n-1},i_n} \right)_{i_n\in\N}$ is the arithmetico-geometric sequence with first element $a+i_1d_1+\cdots+i_{n-1}d_{n-1}$, with common difference $d_n:=\sigma^{-1}\sum_{k=1}^{n-1}\sigma_k d_k$ and common ratio $\sigma$.
\end{remark}

We deduce from Proposition~\ref{prop4} that two distinct ACA can generate the same orbit from an arithmetic array. For instance, for any ACA of weight array $W=(w_i)_{i\in{[-r,r]}^{n-1}}$ of radius $r$, we can consider the ACA of weight array $\overline{W}=(\overline{w_i})_{i\in{[-1,1]}^{n-1}}$ of radius $1$ defined by
$$
\overline{w_i} = \left\lbrace
\begin{array}{ll}
\sigma(W)-\displaystyle\sum_{k=1}^{n-1}\sigma_k(W) & ,\text{ if } i=0_{\Z^{n}}, \\
\sigma_k(W) & , \text{ if } i=e_k,\text{ for all } k\in[1,n-1], \\
0 & , \text{ otherwise}.\\
\end{array}\right.
$$
Then, it is clear that we have
$$
\sigma(\overline{W}) = \sum_{i\in{[-1,1]}^{n-1}}\overline{w_i} = \sigma(W),
$$
and
$$
\sigma_k(\overline{W}) = \sum_{i\in{[-1,1]}^{n-1}}i_k\overline{w_i} = \sigma_k(W),
$$
for all $k\in[1,n-1]$. Therefore, in the sequel of this paper, the coefficients $\sigma$ and $\sigma_k$ will be more important than the elements of the weight array $W$ themselves.

Now, we prove that, in the orbit of an arithmetic array of $\ZZ{m}$, if there exists a balanced simplex of sufficiently large size, then $\sigma$ is invertible modulo $m$.

\begin{proposition}
Let $a\in\ZZ{m}$ and let $d\in{\left(\ZZ{m}\right)}^{n-1}$. In the orbit $\orb(\ArA(a,d))$, if an $n$-simplex of size $s> \frac{n}{m-1}+\frac{3}{2}+\sqrt{\frac{mn^2}{(m-1)^2}+\frac{1}{4}}$ is balanced in $\ZZ{m}$, then $\sigma$ is invertible modulo $m$.
\end{proposition}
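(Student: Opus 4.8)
The plan is to argue by contrapositive: assuming $\sigma$ is \emph{not} invertible modulo $m$, I would fix a prime $p$ dividing $\gcd(\sigma,m)$ and show that no balanced $n$-simplex of size $s\ge\lceil\frac{5n+3}{2}\rceil$ can occur in $\orb(\ArA(a,d))$. The first step is to read off the reduction modulo $p$ of the orbit from the explicit formula of Proposition~\ref{cor1}. Writing the entry of $\partial^{i_n}\ArA(a,d)$ at position $(i_1,\ldots,i_{n-1})$ as $\sigma^{i_n}a+i_n\sigma^{i_n-1}\sum_{k=1}^{n-1}\sigma_kd_k+\sigma^{i_n}\sum_{k=1}^{n-1}i_kd_k$, and using $p\mid\sigma$, every entry whose last coordinate satisfies $i_n\ge2$ is divisible by $p$, since then both $\sigma^{i_n}$ and $\sigma^{i_n-1}$ are multiples of $\sigma$. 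Thus, modulo $p$, the orbit is supported on the two layers $i_n=0$ (an arithmetic $(n-1)$-simplex) and $i_n=1$ (the constant $\sum_{k}\sigma_kd_k$), while all deeper layers vanish.

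Next, I would translate this into a lower bound on the number of entries of the simplex that are $\equiv0\pmod p$. An $n$-simplex $\triangle=\triangle(j,\varepsilon,s)$ meets the hyperplanes $\{i_n=\text{const}\}$ exactly along its rows, and the row $\R_\kappa$ sits in the layer $i_n=j_n+\varepsilon_n\kappa$; since every admissible position has $i_n\ge0$, the map $\kappa\mapsto j_n+\varepsilon_n\kappa$ is injective, so at most two rows can lie in the layers $i_n\in\{0,1\}$, and every other row lies in a layer $i_n\ge2$ and is therefore identically $0$ modulo $p$. Because $\R_\kappa$ is an $(n-1)$-simplex of size $s-\kappa$, of cardinality $\binom{s-\kappa+n-2}{n-1}$ by Proposition~\ref{propsize}, the combined size of any two rows is at most that of the two largest rows, namely $\binom{s+n-2}{n-1}+\binom{s+n-3}{n-1}$. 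Hence, writing $N=\binom{s+n-1}{n}$ for the cardinality of $\triangle$, the number of its entries divisible by $p$ is at least $Z:=N-\binom{s+n-2}{n-1}-\binom{s+n-3}{n-1}$.

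Now I would invoke the Projection Theorem (Theorem~\ref{thmproj}) with $\alpha=p$: if $\triangle$ is balanced in $\ZZ{m}$, then its projection $\pi_p(\triangle)$ is balanced in $\ZZ{p}$, so exactly $N/p$ of its entries are $\equiv0\pmod p$. Combined with the previous paragraph this forces $Z\le N/p\le N/2$, that is $N\le 2\binom{s+n-2}{n-1}+2\binom{s+n-3}{n-1}$. The final step is to show that this inequality is violated for every $s\ge\lceil\frac{5n+3}{2}\rceil$ when $n\ge2$. Clearing denominators reduces it to $(s+n-1)(s+n-2)\le 2n(2s+n-3)$, an upward quadratic in $s$ whose largest root is $\frac{1}{2}\bigl(2n+3+\sqrt{8n^2+1}\bigr)$; the estimate $\sqrt{8n^2+1}<3n$, valid precisely because $n\ge2$, places this root strictly below $\frac{5n+3}{2}$, so for $s\ge\lceil\frac{5n+3}{2}\rceil$ the inequality fails, i.e.\ $Z>N/2\ge N/p$, contradicting balancedness.

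I expect the main obstacle to be pinning down the exact constant $\lceil\frac{5n+3}{2}\rceil$: the mod-$p$ reduction and the identification of the vanishing layers are routine, but one must verify that bounding the non-vanishing part by the two largest rows is genuinely the sharpest estimate and that the resulting binomial/quadratic inequality holds all the way down to the stated threshold for every $n\ge2$, including the effect of the ceiling. A secondary point to handle with care is that all other orientations $\varepsilon$ and all other vertex positions $j_n$ can only decrease $\lvert\triangle\cap\{i_n\in\{0,1\}\}\rvert$, hence increase the count of forced zeros, so the two-largest-rows bound already captures the worst case and no separate treatment of those configurations is needed.
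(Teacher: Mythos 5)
Your proof is correct and follows essentially the same route as the paper's: identify that all rows of the simplex lying in layers $i_n\ge 2$ vanish modulo a non-invertible factor of $\sigma$ (the paper reduces to $\ZZ{\gcd(\sigma,m)}$ rather than to $\ZZ{p}$, a cosmetic difference), bound the surviving entries by the two largest rows so that at least $\binom{s+n-3}{n}$ entries are forced to zero, and compare with the multiplicity $N/p\le N/2$ that balancedness would require, arriving at the same quadratic $s^2-(2n+3)s-(n^2-3n-2)\le 0$ and the same bound $\frac{2n+3+\sqrt{8n^2+1}}{2}<\frac{5n+3}{2}$. No gaps; the only genuine hypothesis you rely on beyond the statement is $n\ge2$, which the paper also uses (it is assumed at the start of that subsection).
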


\begin{proof}
Let $j\in\Z^{n-1}\times\N$ and let $\varepsilon\in\{-1,+1\}^{n}$. Suppose that $\triangle(j,\varepsilon,s)$ is balanced and that $\sigma$ is not invertible modulo $m$. Without loss of generality, we can suppose that $\sigma\equiv 0\bmod{m}$. If not, we consider the projection into $\ZZ{\gcd(\sigma,m)}$ and then we have $\sigma\equiv 0\bmod{\gcd(\sigma,m)}$. By Proposition~\ref{cor1}, we know that, in the case $\sigma\equiv0\bmod{m}$, we have $\partial^i\ArA(a,d)=\ArA(0,0)$, the constant array equal to zero in $\ZZ{m}$, for all $i\ge2$. Therefore, the $k$th row of $\triangle(j,\varepsilon,s)$ is uniquely composed of elements equal to zero, for all $k$ such that $j_n+k\varepsilon_n\ge 2$. Thus $\triangle(j,\varepsilon,s)$ contains at least $\binom{s+n-3}{n}$ elements equal to zero by Proposition~\ref{propsize}. Moreover, since $\triangle(j,\varepsilon,s)$ is balanced, it must contain at least $m\binom{s+n-3}{n}$ elements. It follows that we have
$$
m\binom{s+n-3}{n} \le \binom{s+n-1}{n}
\begin{array}[t]{l}
\displaystyle\Longleftrightarrow\ m\prod_{i=s-2}^{s+n-3}i \le \prod_{i=s}^{s+n-1}i \\ \ \\
\displaystyle\Longleftrightarrow\ m(s-2)(s-1) \le (s+n-2)(s+n-1).
\end{array}
$$
Thus
$$
(m-1)s^2-(2n+3m-3)s-(n^2-3n-2m+2)\le 0
$$
and we deduce that this is possible if only if
$$
\frac{n}{m-1} + \frac{3}{2} - \sqrt{\frac{mn^2}{(m-1)^2}+\frac{1}{4}} \le s \le \frac{n}{m-1} + \frac{3}{2} + \sqrt{\frac{mn^2}{(m-1)^2}+\frac{1}{4}},
$$
in contradiction with the hypothesis that $s>\frac{n}{m-1} + \frac{3}{2} + \sqrt{\frac{mn^2}{(m-1)^2}+\frac{1}{4}}$. This concludes the proof.
\end{proof}

\begin{remark}
For all integers $m\ge 2$, we have $\frac{n}{m-1} + \frac{3}{2} + \sqrt{\frac{mn^2}{(m-1)^2}+\frac{1}{4}}  < \frac{5n+3}{2}$.
\end{remark}

This is the reason why we suppose, in the sequel of this paper, that $\sigma$ is invertible modulo $m$. We end this section by showing that a simplex in the orbit of an arithmetic array can be decomposed into arithmetic subsimplices.

\begin{proposition}\label{propsub}
Let $a\in\ZZ{m}$ and let $d=(d_1,\ldots,d_{n-1})\in\left(\ZZ{m}\right)^{n-1}$. Let $\alpha$ and $s$ be two positive integers such that $\alpha$ is divisible by $\ord_m(\sigma)$ and $s\equiv -t \bmod{\alpha}$, where $t\in[0,n-1]$, and let $\varepsilon\in\{-1,1\}^{n}$. Let $\triangle(j,\varepsilon,s)$ be the $n$-simplex appearing in the orbit $\orb(\ArA(a,d))=(a_i)_{i\in\Z^{n-1}\times\N}$. Then, for every $k\in[0,\alpha-1]^{n}$, the subsimplex
$$
\SuS_k := \left\{ a_{j+\varepsilon\cdot(k+\alpha l)}\ \middle|\ l\in\N^{n}\ \text{such that}\ (k_1+\alpha l_1) + \cdots + (k_n+\alpha l_n) \le s-1 \right\},
$$
obtained from $\triangle(j,\varepsilon,s)$ by extracting one term every $\alpha$ in each component, is the arithmetic simplex
$$
\SuS_k = \AS\left(a_{j+\varepsilon\cdot k},\alpha\sigma^{j_n+\varepsilon_n k_n}\varepsilon\cdot\tilde{d},\left\lceil\frac{s}{\alpha}\right\rceil-\left\lfloor\frac{\sum_{u=1}^{n}k_u+t}{\alpha}\right\rfloor\right),
$$
where $\tilde{d}=\left(d_1,\ldots,d_{n-1},\sigma^{-1}\sum_{u=1}^{n-1}\sigma_ud_u\right)$.
\end{proposition}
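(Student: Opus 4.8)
The plan is to start from the closed form for the orbit given in Proposition~\ref{prop4}, namely $a_i = \sigma^{i_n}\bigl(a + \sum_{u=1}^n i_u \tilde d_u\bigr)$ for $i\in\Z^{n-1}\times\N$, where $\tilde d = (d_1,\ldots,d_{n-1},\sigma^{-1}\sum_{u=1}^{n-1}\sigma_u d_u)$. Substituting the index $i = j + \varepsilon\cdot(k+\alpha l)$ that defines a generic entry of $\SuS_k$, so that $i_u = j_u + \varepsilon_u k_u + \varepsilon_u \alpha l_u$ for each $u\in[1,n]$, I would expand the two factors separately. For the geometric factor, $i_n = (j_n + \varepsilon_n k_n) + \varepsilon_n \alpha l_n$, and since $\ord_m(\sigma)\mid\alpha$ and $\sigma$ is invertible modulo $m$ we have $\sigma^{\alpha}\equiv 1$, hence $\sigma^{\varepsilon_n\alpha l_n}\equiv 1$ (invertibility is exactly what makes this step valid when $\varepsilon_n=-1$, since then the exponent is negative). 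Therefore $\sigma^{i_n}\equiv\sigma^{\,j_n+\varepsilon_n k_n}\pmod m$, a quantity independent of $l$.

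For the affine factor I would split $a + \sum_u i_u\tilde d_u$ into the $l$-independent part $a + \sum_u (j_u+\varepsilon_u k_u)\tilde d_u$ plus $\alpha\sum_u \varepsilon_u l_u\tilde d_u$. Multiplying back by $\sigma^{\,j_n+\varepsilon_n k_n}$, the $l$-independent term becomes $\sigma^{\,j_n+\varepsilon_n k_n}\bigl(a+\sum_u(j_u+\varepsilon_u k_u)\tilde d_u\bigr)$, which is exactly $a_{j+\varepsilon\cdot k}$ by Proposition~\ref{prop4} (the $n$th coordinate of $j+\varepsilon\cdot k$ being $j_n+\varepsilon_n k_n$); this identifies the first element of the arithmetic simplex. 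The remaining term $\sum_{u=1}^n l_u\bigl(\alpha\sigma^{\,j_n+\varepsilon_n k_n}\varepsilon_u\tilde d_u\bigr)$ shows that a unit step in the $u$th direction of $l$ adds the constant $\alpha\sigma^{\,j_n+\varepsilon_n k_n}\varepsilon_u\tilde d_u$, i.e. the $u$th component of $\alpha\sigma^{\,j_n+\varepsilon_n k_n}\varepsilon\cdot\tilde d$. This is precisely the asserted common difference, so $\SuS_k$ is the arithmetic simplex $\AS\bigl(a_{j+\varepsilon\cdot k},\alpha\sigma^{\,j_n+\varepsilon_n k_n}\varepsilon\cdot\tilde d,s'\bigr)$ for a size $s'$ still to be determined.

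It remains to compute $s'$. Writing $K:=\sum_{u=1}^n k_u$, the defining inequality $\sum_u(k_u+\alpha l_u)\le s-1$ reads $\sum_u l_u \le (s-1-K)/\alpha$; since $\sum_u l_u$ ranges over the non-negative integers, the index set of $\SuS_k$ is $\{l\in\N^n : \sum_u l_u \le s'-1\}$ with $s' = \lfloor (s-1-K)/\alpha\rfloor + 1$, exactly the index set of an arithmetic simplex of size $s'$. The final and most delicate step is the purely arithmetic verification that this $s'$ equals $\lceil s/\alpha\rceil - \lfloor (K+t)/\alpha\rfloor$. Here I would use $s\equiv -t\pmod\alpha$ together with $0\le t\le n-1<\alpha$ (which holds in every application, since $\alpha\ge m>n$ when $\gcd(m,n!)=1$): writing $s+t=\alpha p$ gives $\lceil s/\alpha\rceil = p$ and $\lfloor(s-1-K)/\alpha\rfloor + 1 = p - \lceil(K+t+1)/\alpha\rceil + 1$, so the claim reduces to the elementary identity $\lceil(N+1)/\alpha\rceil = \lfloor N/\alpha\rfloor + 1$ with $N=K+t\ge 0$, checked by writing $N=\alpha q + r$, $0\le r<\alpha$. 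I expect this floor–ceiling bookkeeping, rather than the structural part, to be where the proof needs the most care; the role of the hypothesis $t<\alpha$ there is essential, as the identity fails for small $\alpha$.
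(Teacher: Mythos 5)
Your argument is correct and follows essentially the same route as the paper: both start from the closed form of Proposition~\ref{prop4}, use $\sigma^{\alpha}\equiv 1$ (from $\ord_m(\sigma)\mid\alpha$ and the invertibility of $\sigma$) to make the geometric factor independent of $l$, split off the part linear in $l$ to read off the first element $a_{j+\varepsilon\cdot k}$ and the common difference $\alpha\sigma^{j_n+\varepsilon_n k_n}\varepsilon\cdot\tilde{d}$, and then reduce the size claim to floor/ceiling bookkeeping. Your explicit flagging of the hypothesis $t<\alpha$ is apt --- the paper uses it silently when it writes $s=\lceil s/\alpha\rceil\alpha-t$ --- but note that your justification ``$\alpha\ge m$'' is not guaranteed by the hypotheses, since $\alpha$ is only required to be a multiple of $\ord_m(\sigma)$, which may well be smaller than $m$.
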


\begin{proof}
Let $k\in[0,\alpha-1]^{n}$. As already observed in Remark~\ref{rem1}, for every $(i_1,\ldots,i_{n-1})\in\Z^{n-1}$, the sequence $\left( a_{i_1,\ldots,i_{n-1},i_n} \right)_{i_n\in\N}$ is the arithmetico-geometric sequence whose first element is $a+i_1d_1+\cdots+i_{n-1}d_{n-1}$, with common difference $\sigma^{-1}\sum_{u=1}^{n-1}\sigma_u d_u$ and with common ratio $\sigma$. Since $\alpha$ is a multiple of $\ord_m(\sigma)$, it follows that, for every $(l_1,\ldots,l_{n-1})\in\N^{n-1}$, the sequence $\left(a_{j+\varepsilon\cdot(k+\alpha l)}\right)_{l_n\in\N}$ is arithmetic, with common difference $\alpha\sigma^{j_n+\varepsilon_n k_n-1}\varepsilon_n\sum_{u=1}^{n-1}\sigma_u d_u$. Indeed, from Proposition~\ref{prop4}, we obtain
$$
a_{j+\varepsilon\cdot(k+\alpha l)} = \sigma^{j_n+\varepsilon_n(k_n+\alpha l_n)}\left( a + \sum_{u=1}^{n}(j_u+\varepsilon_u(k_u+\alpha l_u))d_u \right),
$$
where $d_n:=\sigma^{-1}\sum_{u=1}^{n-1}\sigma_ud_u$. Since $\alpha$ is a multiple of $\ord_n(\sigma)$, we have
$$
\sigma^{j_n+\varepsilon_n(k_n+\alpha l_n)}=\sigma^{j_n+\varepsilon_n k_n}.
$$
Thus
$$
a_{j+\varepsilon\cdot (k+\alpha l)} = \sigma^{j_n+\varepsilon_n k_n} \left[ \left( a + \sum_{u=1}^{n}(j_u+\varepsilon_u k_u)d_u \right) + \alpha\sum_{u=1}^{n}l_u \varepsilon_u d_u \right].
$$
By Proposition~\ref{prop4} again, for $a_{j+\varepsilon\cdot k}$,
$$
a_{j+\varepsilon\cdot (k+\alpha l)} = a_{j+\varepsilon\cdot k} + \alpha\sigma^{j_n+\varepsilon_n k_n}\sum_{u=1}^{n}l_u(\varepsilon_u d_u),
$$
for every $l\in\N^{n}$. Therefore the subsimplex $\SuS_k$ is an arithmetic simplex whose principal vertex is $a_{j+\varepsilon\cdot k}$ and with common difference $\alpha\sigma^{j_n+\varepsilon_n k_n}\varepsilon\cdot\tilde{d}$, where $\tilde{d}=(d_1,d_2,\ldots,d_n)$. It remains to determine the size of this arithmetic simplex. Let $\lambda:=\left\lceil\frac{s}{\alpha}\right\rceil$ and $\mu:=\left\lfloor\frac{\sum_{u=1}^{n}k_u+t}{\alpha}\right\rfloor$. In other words, we have $s=\lambda\alpha-t$ and
$$
\mu\alpha-t\le \sum_{u=1}^{n}k_u \le (\mu+1)\alpha-t-1.
$$
For any $l\in\N^{n}$, the inequality
$$
\sum_{u=1}^{n}(k_u+\alpha l_u) \le s-1 = \lambda\alpha-t-1
$$
is then equivalent to
$$
\sum_{u=1}^{n}l_u \le \lambda-\mu-1.
$$
Therefore the arithmetic simplex $\SuS_k$ is of size $\lambda-\mu$. This completes the proof.
\end{proof}

From the previous proposition, we know that every $n$-simplex $\triangle$ of size $\lambda\alpha-t$, where $\alpha$ is a multiple of $\ord_m(\sigma)$ and $t\in[0,n-1]$, appearing in the orbit of an arithmetic array can be decomposed into $\alpha^n$ arithmetic $n$-simplices of sizes in $[\lambda-(n-1),\lambda]$. Therefore, in next section, the arithmetic simplices will be studied in detail.

\section{Balanced arithmetic simplices}

In this section, we will see that arithmetic simplices are a source of balanced multisets of $\ZZ{m}$. First, we show, in the general case $n\ge 1$, that there exists sufficient conditions on arithmetic simplices for being balanced. After that, in dimension $n=2$ and $n=3$, i.e., for arithmetic triangles and arithmetic tetrahedra, necessary conditions for being balanced are also given.

\subsection{The general case: in dimension $\mathbf{n\ge1}$}

We begin this subsection by showing that, when $n\ge2$, the edges, the facets and the rows of an arithmetic simplex are also arithmetic.

\begin{proposition}\label{facearith}
Let $a\in\ZZ{m}$, $d=(d_1,\ldots,d_n)\in(\ZZ{m})^{n}$ and let $s$ be a positive integer. Let $\triangle:=\AS(a,d,s)$ and let $d_0:=0$. Then, we have
$$
\V_i(\triangle) = a+(s-1)d_{i},
$$
for all $i\in[0,n]$,
$$
\E_{i,j}(\triangle) = \AP(\V_i(\triangle),d_j-d_i,s),
$$
for all distinct integers $i,j\in[0,n]$,
$$
\R_{i}(\triangle) = \AS(a+i d_n,(d_1,\ldots,d_{n-1}),s-i),
$$
for all $i\in[0,n]$,
$$
\F_i(\triangle) = \AS(a,(d_1,\ldots,d_{i-1},d_{i+1},\ldots,d_n),s),
$$
for all $i\in[1,n]$, and
$$
\F_0(\triangle)= \AS(a+(s-1)d_1,(d_2-d_1,\ldots,d_n-d_1),s).
$$
Moreover, for all $i\in[0,n]$, we have
$$
\triangle\setminus \F_i(\triangle) = \AS(a+d_i,d,s-1).
$$
\end{proposition}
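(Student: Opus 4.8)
The plan is to derive all six families of identities by directly unwinding the definitions of vertices, edges, rows and facets from Definition~\ref{defverfac}, specialised to the arithmetic simplex $\triangle = \AS(a,d,s)$. By definition this simplex is $\triangle(0_{\Z^{n}},+\cdots+,s)$ in the array $\ArA(a,d) = (a_i)_{i\in\Z^{n}}$, so throughout we have $j = 0_{\Z^{n}}$ and $\varepsilon = (+1,\ldots,+1)$, whence $\varepsilon\cdot k = k$ for every index $k$, and $a_i = a + \sum_{k=1}^{n} i_k d_k$ for all $i \in \N^{n}$. With the convention $d_0 = 0$, each claim then reduces to an elementary rewriting of this affine expression, so the whole proposition is essentially a bookkeeping exercise.

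For the vertices, the definition gives $\V_i(\triangle) = a_{(s-1)e_i}$, and evaluating the affine form at the point $(s-1)e_i$ yields $a + (s-1)d_i$ at once, the case $i=0$ recovering the principal vertex $a$ since $d_0 = 0$. For the edges, the definition describes $\E_{i,j}(\triangle)$ as the sequence indexed by $x \in [0,s-1]$ with general term $a_{(s-1-x)e_i + x e_j}$; substituting the affine form and regrouping via $(s-1-x)d_i + x d_j = (s-1)d_i + x(d_j - d_i)$ exhibits the term as $\V_i(\triangle) + x(d_j - d_i)$, which is exactly the arithmetic progression $\AP(\V_i(\triangle),d_j-d_i,s)$.

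The rows and the facets $\F_i$ with $i\in[1,n]$ are handled the same way, by freezing one coordinate. For $\R_i$ one imposes $l_n = i$ and isolates the constant contribution $i\,d_n$, leaving an index $(l_1,\ldots,l_{n-1})$ constrained by $l_1 + \cdots + l_{n-1} \le (s-i)-1$, i.e. the $(n-1)$-simplex $\AS(a+i d_n,(d_1,\ldots,d_{n-1}),s-i)$. For $\F_i$ one instead imposes $l_i = 0$ and simply drops the $i$th difference, giving $\AS(a,(d_1,\ldots,d_{i-1},d_{i+1},\ldots,d_n),s)$. The only identity requiring a genuine change of variables is the one for $\F_0$: here the defining constraint is the equality $l_1 + \cdots + l_n = s-1$, so I would eliminate $l_1 = (s-1) - \sum_{k\ge 2} l_k$ and regroup, turning the affine form into $a + (s-1)d_1 + \sum_{k=2}^{n} l_k (d_k - d_1)$ with $(l_2,\ldots,l_n)$ ranging over $\{\, l_2+\cdots+l_n \le s-1 \,\}$; this reindexing is a bijection onto the facet, yielding $\AS(a+(s-1)d_1,(d_2-d_1,\ldots,d_n-d_1),s)$.

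Finally, for the complement identities, the case $i=0$ is immediate, since removing $\F_0$ replaces the constraint $\le s-1$ by $\le s-2$ and gives $\AS(a,d,s-1) = \AS(a+d_0,d,s-1)$. For $i\in[1,n]$, the complement of $\F_i$ consists of the indices with $l_i \ge 1$, and the translation $l_i \mapsto l_i - 1$ is a bijection onto $\{\, l \in \N^{n} : l_1 + \cdots + l_n \le s-2 \,\}$ that contributes the constant $d_i$; hence $\triangle\setminus\F_i(\triangle) = \AS(a+d_i,d,s-1)$. No step presents a real obstacle: the only points deserving care are the two reindexings (for $\F_0$ and for the complements), where one must verify that the proposed substitution is a genuine bijection between the two index sets, so that the claimed \emph{multiset} equality holds and not merely an equality of underlying sets.
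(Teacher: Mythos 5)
Your proposal is correct and follows exactly the route the paper takes: the paper's entire proof is ``By Definition~\ref{defarith} and Definition~\ref{defverfac}'', i.e.\ precisely the direct unwinding of the definitions that you carry out. Your additional care about the two reindexings being bijections (for $\F_0$ and for the complements $\triangle\setminus\F_i$) is exactly the right point to check for the multiset equalities, and your computations are all accurate.
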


\begin{proof}
By Definition~\ref{defarith} and Definition~\ref{defverfac}.
\end{proof}

The following theorem, which gives sufficient conditions on arithmetic simplices for being balanced, is the main result of this section.

\begin{theorem}\label{thm2}
Let $n$ and $m$ be two positive integers such that $\gcd(m,n!)=1$. Let $a\in\ZZ{m}$ and let $d=(d_1,\ldots,d_n)\in{\left(\ZZ{m}\right)}^{n}$ such that $d_i$, for all $i\in[1,n]$, and $d_j-d_i$, for all distinct integers $i,j\in[1,n]$, are invertible. Then, the arithmetic simplex $\AS(a,d,s)$ is balanced for all $s\equiv -t\bmod{m}$, with $t\in[0,n-1]$.
\end{theorem}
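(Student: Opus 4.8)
**The plan is to prove the theorem by induction on $m$ using the Projection Theorem (Theorem~\ref{thmproj}) as the engine, reducing the general case to prime moduli.** Since $\gcd(m,n!)=1$, every prime divisor of $m$ exceeds $n$. Writing $m=p^{k}m'$ with $p$ prime and $p>n$, I would apply Theorem~\ref{thmproj} with $\alpha=m/p$ (or, more cleanly, peel off one prime power at a time). Theorem~\ref{thmproj} tells us that $\AS(a,d,s)$ is balanced in $\ZZ{m}$ if and only if its projection into $\ZZ{\alpha}$ is balanced \emph{and} the multiplicity function is invariant under the shift $x\mapsto x+\alpha$. The first condition will follow from the induction hypothesis (the projected simplex is $\AS(\pi_\alpha(a),\pi_\alpha(d),s)$ in $\ZZ{\alpha}$, and the invertibility hypotheses on the $d_i$ and $d_j-d_i$ are preserved under projection), so the crux is establishing the $\alpha$-periodicity of the multiplicities.

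\textbf{The base case is the prime modulus $m=p$ with $p>n$, and this is where the real content lies.} Here I must show directly that $\AS(a,d,s)$ is balanced in $\ZZ{p}$ whenever $s\equiv -t\bmod p$ with $t\in[0,n-1]$. My first line of attack is induction on the dimension $n$, using the decomposition of a simplex into its rows supplied by Proposition~\ref{facearith}: each row $\R_i(\triangle)=\AS(a+id_n,(d_1,\ldots,d_{n-1}),s-i)$ is an $(n-1)$-dimensional arithmetic simplex with the same row common differences $d_1,\ldots,d_{n-1}$, which remain invertible along with their pairwise differences. For $n=1$ the claim is the elementary fact that an arithmetic progression $\AP(a,d,s)$ with $d$ invertible in $\ZZ{p}$ and length $s$ divisible by $p$ visits every residue equally often. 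The difficulty in the inductive step is that the rows have \emph{varying} sizes $s, s-1, \ldots$, and $s-i$ need not be $\equiv -t\bmod p$, so the individual rows need not themselves be balanced; the balance must emerge only after summing the multiplicity contributions of all rows.

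\textbf{I expect the main obstacle to be precisely this accounting over rows of mixed sizes, and I would handle it by a counting/generating-function argument rather than term-by-term induction.} The cleanest route is to count, for a fixed target value $c\in\ZZ{p}$, the number of lattice points $i\in\N^{n}$ with $i_1+\cdots+i_n\le s-1$ satisfying the single affine-linear congruence $a+i_1d_1+\cdots+i_nd_n\equiv c\pmod p$, and show this count is independent of $c$. Because each $d_i$ is invertible, I can reparametrize and reduce to counting points of the discrete simplex $\{i\in\N^n: \sum i_u \le s-1\}$ lying in each affine hyperplane class modulo $p$. The equidistribution of simplex points among residue classes of an invertible linear form, when the simplex side $s$ is a multiple of $p$ (adjusted by $t$), is exactly the statement that the polynomial identity $\sum_{i} X^{\,i_1d_1+\cdots+i_nd_n}$ is a constant multiple of $1+X+\cdots+X^{p-1}$ in $\ZZ{}[X]/(X^p-1)$; evaluating at a primitive $p$th root of unity and using $p>n$ to invoke the invertibility of the relevant Vandermonde-type factors $\prod(d_j-d_i)$ and $\prod d_i$ should force the non-trivial character sums to vanish. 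This root-of-unity / character-sum computation, together with verifying it survives the lift back through the $\alpha$-periodicity condition in the induction on $m$, is the technical heart; everything else is bookkeeping justified by Proposition~\ref{facearith} and Theorem~\ref{thmproj}.
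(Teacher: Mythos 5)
Your route is genuinely different from the paper's. The paper inducts on the dimension $n$, not on $m$, and never touches characters or the Projection Theorem: its engine is Lemma~\ref{keylemma}, which observes that $\triangle\setminus\F_k(\triangle)=\AS(a+d_k,d,s-1)$ for every $k$, so that removing two different facets of $\triangle=\AS(a,d,s)$ yields translates of one another by $d_j-d_i$, whence $\m_{\triangle}(x+d_j)-\m_{\triangle}(x+d_i)=\m_{\F_j(\triangle)}(x+d_j)-\m_{\F_i(\triangle)}(x+d_i)$. Since the facets $\F_1(\triangle)$ and $\F_2(\triangle)$ are $(n-1)$-simplices of the \emph{same} size $s$ (unlike the rows, whose varying sizes you correctly identify as the obstacle to a row-by-row induction), they are balanced by the dimension induction when $t\le n-2$, which forces $\m_{\triangle}$ to be invariant under translation by the invertible element $d_2-d_1$ and hence constant; the remaining case $t=n-1$ is handled by deleting one facet from a simplex of size $s+1$. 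This is short and entirely elementary, whereas your Fourier-analytic count, if completed, has the merit of being a single closed computation with no induction at all.

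That said, as written your proposal leaves both of its self-declared technical hearts open, and one of them is organized suboptimally. First, the $\alpha$-periodicity condition required by Theorem~\ref{thmproj} is never argued; in fact the whole induction on $m$ is unnecessary, because the character sum can be run directly over every nontrivial character $\chi(x)=\zeta^{x}$ of $\ZZ{m}$ (with $\zeta$ an $m$-th root of unity, $\zeta\ne1$): the hypotheses that the $d_i$ and the $d_j-d_i$ are invertible modulo $m$ already guarantee that the quantities $q_u:=\zeta^{d_u}$ are distinct and different from $1$, which is all the computation needs. Second, the vanishing of the nontrivial character sums is not a consequence of the invertibility of Vandermonde-type factors alone; you must actually evaluate
$$
\sum_{\substack{i\in\N^{n}\\ i_1+\cdots+i_n\le s-1}}\ \prod_{u=1}^{n} q_u^{i_u}\ =\ \sum_{u=1}^{n}\frac{q_u^{n-1}}{\prod_{v\ne u}(q_u-q_v)}\cdot\frac{q_u^{s}-1}{q_u-1},
$$
substitute $q_u^{s}=q_u^{-t}$, and reduce to the Lagrange-interpolation identity $\sum_{u}q_u^{r}/\prod_{v\ne u}(q_u-q_v)=0$ for $0\le r\le n-2$; the hypothesis $t\in[0,n-1]$ enters precisely to keep the resulting exponents $n-1-j$, for $j\in[1,t]$, inside the range $[0,n-2]$. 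With these two points supplied your argument closes and gives a valid alternative proof, but without them it is not complete.
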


The proof of this theorem is based on the following result, which is a key lemma in this paper.

\begin{lemma}\label{keylemma}
Let $n\ge 2$, $m$ and $s$ be positive integers. Let $a\in\ZZ{m}$ and let $d\in(\ZZ{m})^{n}$. Then, the multiplicity function of $\triangle=\AS(a,d,s)$ verifies
$$
\m_{\triangle}(x+d_j) - \m_{\triangle}(x+d_i) = \m_{\F_j(\triangle)}(x+d_j) - \m_{\F_i(\triangle)}(x+d_i),
$$
for all $x\in\ZZ{m}$ and for all distinct integers $i,j\in[0,n]$, where $d_0:=0$.
\end{lemma}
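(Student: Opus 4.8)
The plan is to reduce the whole statement to the single decomposition identity already recorded in Proposition~\ref{facearith}, namely
$$
\triangle\setminus\F_i(\triangle) = \AS(a+d_i,d,s-1),
$$
valid for every $i\in[0,n]$ with the convention $d_0:=0$. First I would read this geometric decomposition as a disjoint union of multisets, $\triangle = \F_i(\triangle)\sqcup\AS(a+d_i,d,s-1)$, which at the level of multiplicity functions becomes
$$
\m_{\triangle}(y) = \m_{\F_i(\triangle)}(y) + \m_{\AS(a+d_i,d,s-1)}(y),
$$
for all $y\in\ZZ{m}$ and all $i\in[0,n]$.

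The crucial observation is that the inner simplex $\AS(a+d_i,d,s-1)$ is nothing but the translate by $d_i$ of the one fixed simplex $\AS(a,d,s-1)$, whose size and common difference do not depend on $i$. Consequently $\m_{\AS(a+d_i,d,s-1)}(y)=\m_{\AS(a,d,s-1)}(y-d_i)$. Substituting this into the displayed identity and then replacing $y$ by $x+d_i$, I obtain
$$
\m_{\triangle}(x+d_i) - \m_{\F_i(\triangle)}(x+d_i) = \m_{\AS(a,d,s-1)}(x),
$$
in which the right-hand side is manifestly independent of the index $i$. Writing this equation once for $i$ and once for $j$, the two right-hand sides agree, and subtracting gives exactly
$$
\m_{\triangle}(x+d_j) - \m_{\triangle}(x+d_i) = \m_{\F_j(\triangle)}(x+d_j) - \m_{\F_i(\triangle)}(x+d_i),
$$
as required.

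There is essentially no computational obstacle here; the entire content is in identifying the correct invariant, namely the common value $\m_{\AS(a,d,s-1)}(x)$ of the quantity $\m_{\triangle}(x+d_i)-\m_{\F_i(\triangle)}(x+d_i)$ over all $i\in[0,n]$. The only points requiring care are the bookkeeping for the boundary case $i=0$ (where $d_0=0$ makes $\F_0$ the hypotenuse facet and $\triangle\setminus\F_0 = \AS(a,d,s-1)$), which is already handled uniformly by the convention $d_0:=0$ in Proposition~\ref{facearith}, and the fact that translation by a fixed element of $\ZZ{m}$ shifts a multiplicity function by that same element, which is immediate from the definition of the multiset $\AS(a+d_i,d,s-1)$.
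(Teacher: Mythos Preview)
Your proof is correct and follows essentially the same route as the paper: both use the identity $\triangle\setminus\F_i(\triangle)=\AS(a+d_i,d,s-1)$ from Proposition~\ref{facearith}, recognise that these inner simplices are translates of one another, and conclude that $\m_{\triangle}(x+d_i)-\m_{\F_i(\triangle)}(x+d_i)$ is independent of $i$. The only cosmetic difference is that you name the common value $\m_{\AS(a,d,s-1)}(x)$ explicitly, whereas the paper just equates the two instances directly.
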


\begin{proof}
Let $i$ and $j$ be distinct integers in $[0,n]$. Since, from Proposition~\ref{facearith}, we have
$$
\triangle\setminus \F_k(\triangle) = \AS(a+d_k,d,s-1),
$$
for all integers $k\in[0,n]$, it follows that the arithmetic simplex $\triangle\setminus \F_j(\triangle)$ is a translate of $\triangle\setminus \F_i(\triangle)$ by $d_j-d_i$. Thus, for all $x\in\ZZ{m}$, we have
$$
\m_{\triangle\setminus \F_j(\triangle)}(x+d_j) = \m_{\triangle\setminus \F_i(\triangle)}(x+d_i).
$$
Therefore,
$$
\m_{\triangle}(x+d_j) - \m_{\F_j(\triangle)}(x+d_j) = \m_{\triangle}(x+d_i) - \m_{\F_i(\triangle)}(x+d_i)
$$
for all $x\in\ZZ{m}$.
\end{proof}

\begin{proof}[Proof of Theorem~\ref{thm2}]
By induction on $n$.

For $n=1$ and $s\equiv 0 \bmod{m}$, the arithmetic simplex $\AS(a,d,s)$ is simply an arithmetic progression with invertible common difference $d\in\ZZ{m}$ and of length a multiple of $m$. Therefore $\AS(a,d,s)$ is balanced in this case.

Suppose now that $n\geq2$ and that the result is true in dimension $\le n-1$. We distinguish different cases depending on the residue class of $s$ modulo $m$.

\begin{case}
For $s\equiv -t \bmod{m}$, with $t\in[0,n-2]$.

Let $\triangle=\AS(a,d,s)$. First, from Proposition~\ref{facearith}, we know that
$$
\F_1(\triangle) = \AS(a,(d_2,\ldots,d_n),s)\ \text{and}\ \F_2(\triangle) = \AS(a,(d_1,d_3,\ldots,d_n),s).
$$
Moreover, since $d_i$, for all $i\in[1,n]$, and $d_j-d_i$, for all distinct integers $i,j\in[1,n]$, are invertible, we obtain by the induction hypothesis that the facets $\F_1(\triangle)$ and $\F_2(\triangle)$ are balanced simplices of dimension $n-1$. Therefore, their multiplicity functions $\m_{\F_1(\triangle)}$ and $\m_{\F_2(\triangle)}$ are constant on $\ZZ{m}$, equal to
$$
\m_{\F_1(\triangle)}(x) = \m_{\F_2(\triangle)}(x) = \frac{1}{m}\binom{s+n-2}{n-1},
$$
for all $x\in\ZZ{m}$. By Lemma~\ref{keylemma}, we obtain
$$
\m_{\triangle}(x+d_1)-\m_{\triangle}(x+d_2) = \m_{\F_1(\triangle)}(x+d_1) - \m_{\F_2(\triangle)}(x+d_2) = 0,
$$
for all $x\in\ZZ{m}$. Finally, since $d_2-d_1$ is invertible and
$$
\m_{\triangle}(x+(d_1-d_2)) = \m_{\triangle}(x)
$$
for all $x\in\ZZ{m}$, we conclude that $\m_\triangle$ is a constant function and so the arithmetic simplex $\triangle$ is balanced in $\ZZ{m}$.
\end{case}

\begin{case}
For $s\equiv -(n-1)\bmod{m}$.

The arithmetic simplex $\AS(a,(d_1,\ldots,d_n),s)$ can be seen as the multiset difference $\triangle\setminus \F_1(\triangle)$, where
$$
\triangle=\AS(a-d_1,(d_1,\ldots,d_n),s+1).
$$
Since $s+1\equiv -(n-2)\bmod{m}$, it follows, from Case~1 and from the induction hypothesis, respectively, that $\triangle=\AS(a-d_1,(d_1,\ldots,d_n),s+1)$ and $\F_1(\triangle)=\AS(a-d_1,(d_2,\ldots,d_n),s+1)$ are balanced. The multiset difference of balanced multisets is also balanced. This concludes the proof.
\end{case}
\vspace{-19pt}
\end{proof}

\subsection{In dimension 2}

In this subsection, we only consider arithmetic triangles over $\ZZ{m}$. Necessary conditions on the common differences $d_1$, $d_2$ and $d_2-d_1$ of $\AS(a,(d_1,d_2),s)$, depicted in Figure~\ref{fig10}, for being balanced in $\ZZ{m}$ are determined.

\begin{figure}
\begin{tabular}{c@{\hspace{30pt}}c}
\begin{tikzpicture}[scale=0.5]
\node[draw] at (0,0) (0) {\tiny $V_0$};
\node[draw] at (0,-4) (1) {\tiny $V_1$};
\node[draw] at (4,0) (2) {\tiny $V_2$};

\path[->] (0) edge node [left] {\tiny $d_1$} (1);
\path[->] (0) edge node [above] {\tiny $d_2$} (2);
\path[->] (1) edge node [sloped,below] {\tiny $d_2-d_1$} (2);
\end{tikzpicture}
&
\begin{tikzpicture}[scale=0.5]
\node[draw] at (0,0) (0) {\tiny $V_0$};
\node[draw] at (0,-4) (1) {\tiny $V_1$};
\node[draw] at (4,0) (2) {\tiny $V_2$};

\path[->] (0) edge node [left] {\tiny $0$} (1);
\path[->] (0) edge node [above] {\tiny $d$} (2);
\path[->] (1) edge node [sloped,below] {\tiny $d$} (2);
\end{tikzpicture} \\ \ \\
$\AS(a,(d_1,d_2),s)$ & $\AS(a,(0,d),s)$ \\
\end{tabular}
\caption{Common differences of arithmetic triangles}\label{fig10}
\end{figure}

\begin{theorem}\label{balasn2}
Let $m$ and $s$ be two positive integers and let $a,d_1,d_2\in\ZZ{m}$. If the arithmetic triangle $\AS(a,(d_1,d_2),s)$ is balanced, then the common differences $d_1$, $d_2$ and $d_2-d_1$ are all invertible.
\end{theorem}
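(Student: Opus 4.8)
The plan is to prove the contrapositive: assuming that at least one of $d_1$, $d_2$, $d_2-d_1$ is \emph{not} invertible in $\ZZ{m}$, I will show that $\AS(a,(d_1,d_2),s)$ is not balanced. The first step is to exploit the symmetry of the arithmetic triangle recorded in the multiset identities following Definition~\ref{defarith}. The relation $\AS(a,(d_1,d_2),s)=\AS(a+(s-1)d_1,(-d_1,d_2-d_1),s)$ together with the permutation $\AS(a,(d_1,d_2),s)=\AS(a,(d_2,d_1),s)$ shows that the three quantities $d_1$, $d_2$, $d_2-d_1$ play symmetric roles up to sign, and that each of them can be brought into the first coordinate of a representation of the \emph{same} multiset. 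Since negation does not affect invertibility, I may assume without loss of generality that it is the first common difference that is non-invertible; I rename this representation $\AS(a,(d_1,d_2),s)$ with $d_1$ non-invertible.

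Next I would pass to a prime modulus. Choose a prime $p$ dividing $\gcd(d_1,m)$. By the Projection Theorem~\ref{thmproj}, if $\AS(a,(d_1,d_2),s)$ were balanced in $\ZZ{m}$ then its projection $\pi_p(\AS(a,(d_1,d_2),s))$ would be balanced in $\ZZ{p}$. This projection is exactly the arithmetic triangle $\AS(\bar{a},(\bar{d_1},\bar{d_2}),s)$ of $\ZZ{p}$, and $\bar{d_1}=0$ since $p\mid d_1$. Thus it suffices to show that a triangle of the form $\AS(\bar{a},(0,\bar{d_2}),s)$ of $\ZZ{p}$ is never balanced. If $\bar{d_2}=0$ as well, this triangle is the constant multiset equal to $\bar{a}$ with multiplicity $\binom{s+1}{2}\ge 1$ (Proposition~\ref{propsize}), which is not balanced for $p\ge 2$. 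Otherwise $\bar{d_2}$ is invertible in $\ZZ{p}$, and applying the affine bijection $x\mapsto\bar{d_2}^{-1}(x-\bar{a})$ of $\ZZ{p}$ — which merely permutes $\ZZ{p}$ and hence preserves balancedness — reduces the problem to deciding whether $\AS(0,(0,1),s)$ is balanced in $\ZZ{p}$.

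Finally I would compute the multiplicity function of $\triangle:=\AS(0,(0,1),s)$ directly. Its value at position $(i_1,i_2)$ does not depend on $i_1$, so the residue $v$ occurs with multiplicity $\m_\triangle(v)=\sum_{0\le i\le s-1,\ i\equiv v\,(p)}(s-i)$, the weight $s-i$ counting the admissible indices $i_1$. Writing $s-1=Qp+R$ with $0\le R\le p-1$ and splitting the residues according to whether $0\le v\le R$ or $R<v\le p-1$, each sum becomes an explicit quadratic in $v$, yielding $\m_\triangle(v)-\m_\triangle(v')=(Q+1)(v'-v)$ for two residues in the first range and $\m_\triangle(v)-\m_\triangle(v')=Q(v'-v)$ for two residues in the second. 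A short case analysis then exhibits two residues with distinct multiplicities: when $R\ge 1$ the residues $0,1$ give difference $Q+1>0$; when $R=0$ and $p\ge 3$ the residues $1,2$ give difference $Q$, with the only escape $Q=0$ forcing $s=1$ (a single-element, hence unbalanced, multiset); and the parity case $p=2$ is settled by comparing the two residues $0$ and $1$ across the ranges. In every case $\triangle$ fails to be balanced, contradicting the Projection Theorem and completing the proof.

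The main obstacle is precisely this last step: one must guarantee that the weighted residue-class sums cannot all coincide for \emph{any} admissible size $s$. This is exactly where the geometry of the triangle enters, through the fact that the column weights $s-i$ are strictly decreasing, and it is also where the degenerate situations $s=1$ and $p=2$ must be isolated and treated by a direct comparison rather than by comparing two residues within a single range.
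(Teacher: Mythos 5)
Your proof is correct and takes essentially the same route as the paper: reduce by symmetry to the case where $d_1$ is non-invertible, project to a smaller modulus where $d_1$ vanishes, and show by an explicit count of the weighted residue-class multiplicities that a triangle of the form $\AS(a,(0,d),s)$ is never balanced (this computation is the paper's Lemma~\ref{prop1}). The only difference is that you project directly to a prime $p\mid\gcd(d_1,m)$, which forces $\pi_p(d_2)$ to be either zero or invertible and thereby avoids the paper's second projection step.
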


The proof of this theorem is based on the following lemma, where the multiplicity function of the arithmetic triangle $\AS(a,(0,d),s)$, with $d$ invertible, is explicitly given.

\begin{lemma}\label{prop1}
Let $m$ and $s$ be two positive integers and let $a,d\in\ZZ{m}$ such that $d$ is invertible. Then, the arithmetic triangle $\triangle=\AS(a,(0,d),s)$ is not balanced in $\ZZ{m}$. Moreover, if $s=\lambda m+\mu$ is the Euclidean division of $s$ by $m$, we have
$$
\m_{\triangle}(a+id) = \binom{\lambda+1}{2}m + \left\lceil\frac{s-i}{m}\right\rceil(\mu-i),
$$
for all integers $i\in[0,m-1]$.
\end{lemma}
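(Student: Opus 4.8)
We must show that the arithmetic triangle $\triangle=\AS(a,(0,d),s)$ with $d$ invertible is not balanced, and that if $s=\lambda m+\mu$ with $0\le\mu\le m-1$, then
$$
\m_{\triangle}(a+id) = \binom{\lambda+1}{2}m + \left\lceil\tfrac{s-i}{m}\right\rceil(\mu-i)
$$
for $i\in[0,m-1]$.

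**Approach.** The plan is to exploit the very special structure of $\AS(a,(0,d),s)$: because the first common difference is $0$, the triangle is constant along the direction $e_1$ and arithmetic with invertible common difference $d$ along the direction $e_2$. Concretely, by Proposition~\ref{facearith} the $k$th row is $\R_k(\triangle)=\AS(a+k\cdot 0,(0),s-k)$, i.e. in each row $k$ (for $k\in[0,s-1]$) the entries are $a_{i_1,k}=a+i_1 d$ with $i_1$ ranging over $[0,s-1-k]$, independent of $k$ only through the length. So I would first set up the count directly from the definition rather than inducting on rows: an entry of $\triangle$ equals $a+jd$ precisely when its second coordinate is $j$, and for each fixed $j\in[0,s-1]$ the number of lattice points $(i_1,i_2)$ with $i_2=j$ and $i_1+i_2\le s-1$ is $s-j$. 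Since $d$ is invertible, the residue class $a+jd$ depends only on $j\bmod m$, and distinct $j\in[0,m-1]$ give distinct classes. Hence
$$
\m_{\triangle}(a+id)=\sum_{\substack{0\le j\le s-1\\ j\equiv i\,(m)}}(s-j),
$$
and the whole problem reduces to evaluating this arithmetic-progression sum.

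**The main computation.** The indices $j\equiv i\pmod m$ in $[0,s-1]$ are $j=i, i+m, i+2m,\dots$, and I would count them using $\lambda$ and $\mu$. The number of such $j$ is exactly $\lceil (s-i)/m\rceil$ (one checks this splits into the case $i<\mu$, giving $\lambda+1$ terms, and $i\ge\mu$, giving $\lambda$ terms, which is consistent with the ceiling). Writing $N_i=\lceil(s-i)/m\rceil$ for brevity, the terms $s-j$ run through $s-i,\,s-i-m,\dots,\,s-i-(N_i-1)m$, an arithmetic progression of $N_i$ terms with common difference $-m$. Summing gives
$$
\m_{\triangle}(a+id)=N_i(s-i)-m\binom{N_i}{2}.
$$
The remaining step is purely algebraic: substitute $s=\lambda m+\mu$ and simplify $N_i(s-i)-m\binom{N_i}{2}$ into the claimed closed form $\binom{\lambda+1}{2}m+N_i(\mu-i)$. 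I would treat the two cases $0\le i<\mu$ (where $N_i=\lambda+1$) and $\mu\le i\le m-1$ (where $N_i=\lambda$) separately; in each the $\lambda$-dependent part collapses to $\binom{\lambda+1}{2}m$ and the residual linear term matches $N_i(\mu-i)$.

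**Non-balancedness.** Once the formula is in hand, the non-balanced claim is immediate: the correction term $N_i(\mu-i)$ is not constant in $i$. For instance comparing $i=0$ and $i=\mu$ (when $\mu\ge1$) gives different multiplicities; and if $\mu=0$ one sees the multiplicities are $\binom{\lambda+1}{2}m - \lambda i$ which genuinely vary with $i$ for $\lambda\ge1$ and $m\ge2$. I would phrase this as: if $\m_{\triangle}$ were constant the differences $N_i(\mu-i)$ would all coincide, which fails since the map $i\mapsto N_i(\mu-i)$ takes at least two distinct values. The main obstacle is bookkeeping the ceiling function $\lceil(s-i)/m\rceil$ correctly across the boundary $i=\mu$; everything else is a routine arithmetic-series evaluation. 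I expect no conceptual difficulty, only the need to keep the two residue-class cases cleanly separated so the closed form emerges uniformly.
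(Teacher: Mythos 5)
Your proof is correct and follows essentially the same route as the paper: both count, for each residue class $a+id$, the rows in which it occurs (those indexed by $j\equiv i\bmod m$, contributing $s-j$ each) and evaluate the resulting arithmetic progression of $\left\lceil\frac{s-i}{m}\right\rceil$ terms, then observe the correction term $\left\lceil\frac{s-i}{m}\right\rceil(\mu-i)$ is non-constant. (Your passing description of the ``$k$th row'' mixes up which coordinate carries the difference $d$, but since $\AS(a,(0,d),s)=\AS(a,(d,0),s)$ as multisets this has no effect on the count.)
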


\begin{proof}
Let $s=\lambda m +\mu$ be the Euclidean division of $s$ by $m$. As represented in Figure~\ref{fig10}, the common differences of $\triangle=\AS(a,(0,d),s)$ are $0$, $d$ and $d$. Then, for every integer $j\in[0,s-1]$, the $j$th row $\R_j$ of $\triangle$ is the constant sequence of length $s-j$ equal to $a+jd$, that is, $\R_j(\triangle)=\AP(a+jd,0,s-j)$. Thus, the multiplicity function $\m_{\triangle}$ is determined by
$$
\m_{\triangle}(a+id)
\begin{array}[t]{l}
 = \displaystyle\sum_{j=0}^{s-1}\m_{\R_j(\triangle)}(a+id) = \sum_{j=0}^{\left\lceil\frac{s-i}{m}\right\rceil}\m_{\R_{jm+i}(\triangle)}(a+id) \\
 \ \\
 = \displaystyle\sum_{j=0}^{\left\lceil\frac{s-i}{m}\right\rceil}(s-(jm+i)) = \sum_{j=0}^{\left\lceil\frac{s-i}{m}\right\rceil}((\lambda-j)m+(\mu-i)) \\
 \ \\
 = \displaystyle\binom{\lambda+1}{2}m + \left\lceil\frac{s-i}{m}\right\rceil(\mu-i)
\end{array}
$$
for all integers $i\in[0,m-1]$. It follows that
$$
\m_{\triangle}(a) > \m_{\triangle}(a+d) > \m_{\triangle}(a+2d) > \cdots\cdots > \m_{\triangle}(a+(m-1)d).
$$
Since its multiplicity function is not constant on $\ZZ{m}$, the arithmetic triangle $\triangle$ is not balanced. This completes the proof.
\end{proof}

\begin{proof}[Proof of Theorem~\ref{balasn2}]
Let $\triangle=\AS(a,(d_1,d_2),s)$ be an arithmetic triangle and suppose that there exists at least one common difference that is not invertible. Without loss of generality, suppose that $d_1$ is not invertible. Then, we consider the projection of $\triangle$ into $\ZZ{\alpha}$, where $\alpha=\gcd(d_1,m)\geq 2$. Then,
$$
\pi_\alpha(\triangle)=\AS(\pi_{\alpha}(a),(0,\pi_{\alpha}(d_2)),s).
$$
If $\pi_{\alpha}(d_2)$ is invertible in $\ZZ{\alpha}$, then $\triangle$ is not balanced in $\ZZ{m}$ since $\pi_{\alpha}(\triangle)$ is not balanced in $\ZZ{\alpha}$ by Lemma~\ref{prop1}. Otherwise, if $\pi_{\alpha}(d_2)$ is not invertible in $\ZZ{\alpha}$, the result follows since the projection of $\pi_{\alpha}(\triangle)$ into $\ZZ{\beta}$, where $\beta=\gcd(\pi_{\alpha}(d_1),\alpha)\geq 2$, is the constant triangle uniquely composed of elements $\pi_{\beta}(\pi_{\alpha}(a))$, which is obviously not balanced in $\ZZ{\beta}$.
\end{proof}

It follows from this theorem that there does not exist balanced arithmetic triangles in $\ZZ{m}$ for $m$ even. Nevertheless, in the case where $m$ is an even number, the multiplicity function of an arithmetic triangle of $\ZZ{m}$ can be completely determined when exactly two of the three common differences $d_1,d_2,d_1-d_2$ are invertible and the size $s$ is such that $s\equiv0$ or $-1\bmod{m}$.

\begin{proposition}\label{propbalasn2}
Let $m$ and $s$ be two positive integers such that $s\equiv 0$ or $-1\bmod{m}$. Let $a,d_1,d_2\in\ZZ{m}$ and let $\triangle=\AS(a,(d_1,d_2),s)$. If $d_2$ and $d_2-d_1$ are invertible, then
$$
\m_{\triangle}(x) = \m_{\triangle}(x+\gcd(d_1,m)),
$$
for all $x\in\ZZ{m}$, and
$$
\m_{\triangle}(a+id_2) = \frac{1}{m}\binom{s+1}{2} + \left\lceil\frac{s}{m}\right\rceil\left(\frac{\gcd(d_1,m)-1}{2}-i\right),
$$
for all integers $i\in[0,\gcd(d_1,m)-1]$.
\end{proposition}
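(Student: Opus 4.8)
The plan is to first show that the multiplicity function $\m_{\triangle}$ is invariant under translation by $d_1$, and then to read off the explicit values by projecting onto $\ZZ{\alpha}$, where $\alpha:=\gcd(d_1,m)$. Since $\alpha$ lies in the subgroup $\langle d_1\rangle$ generated by $d_1$, invariance under translation by $d_1$ immediately yields the periodicity statement $\m_{\triangle}(x)=\m_{\triangle}(x+\alpha)$. The explicit formula will then follow from Lemma~\ref{prop1} applied in $\ZZ{\alpha}$.

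For the translation-invariance I would apply Lemma~\ref{keylemma} with $(i,j)=(0,1)$ and $d_0=0$, which gives
$$
\m_{\triangle}(x+d_1)-\m_{\triangle}(x)=\m_{\F_1(\triangle)}(x+d_1)-\m_{\F_0(\triangle)}(x)
$$
for all $x\in\ZZ{m}$. By Proposition~\ref{facearith}, $\F_1(\triangle)=\AP(a,d_2,s)$ and $\F_0(\triangle)=\AP(a+(s-1)d_1,d_2-d_1,s)$ are arithmetic progressions of length $s$ with the invertible common differences $d_2$ and $d_2-d_1$. When $s\equiv 0\bmod{m}$ both are balanced, with constant multiplicity $s/m$, so the right-hand side vanishes identically. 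When $s\equiv -1\bmod{m}$ each progression hits every residue of $\ZZ{m}$ with multiplicity $(s+1)/m$ except at its single terminal value, where the multiplicity drops by one; a short computation identifies these exceptional values as $a-d_2$ for $\F_1(\triangle)$ and $a-d_1-d_2$ for $\F_0(\triangle)$ (using $s\,d_1=-d_1$ in $\ZZ{m}$). Since $a-d_2=(a-d_1-d_2)+d_1$, the two defects occur at the same argument $x=a-d_1-d_2$ of the right-hand side and cancel. In both cases $\m_{\triangle}(x+d_1)=\m_{\triangle}(x)$ for all $x$, which is the heart of the argument.

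It remains to compute the common value on each fiber. Because $\alpha\mid d_1$, the projection satisfies $\pi_{\alpha}(\triangle)=\AS(\pi_{\alpha}(a),(0,\pi_{\alpha}(d_2)),s)$ with $\pi_{\alpha}(d_2)$ invertible in $\ZZ{\alpha}$. The $\alpha$-periodicity just established means that $\m_{\triangle}$ is constant on each fiber $\pi_{\alpha}^{-1}(y)$, which has $m/\alpha$ elements, so that $\m_{\pi_{\alpha}(\triangle)}(y)=\frac{m}{\alpha}\,\m_{\triangle}(x)$ for any $x$ with $\pi_{\alpha}(x)=y$. Applying Lemma~\ref{prop1} in $\ZZ{\alpha}$ to $\pi_{\alpha}(\triangle)$ and dividing by $m/\alpha$ then gives $\m_{\triangle}(a+id_2)$ for $i\in[0,\alpha-1]$; substituting the Euclidean division of $s$ by $\alpha$ (whose remainder is $0$ or $\alpha-1$ according as $s\equiv 0$ or $-1\bmod{m}$) and simplifying recovers the stated expression. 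I expect the main obstacle to be the bookkeeping in the $s\equiv -1$ case: both the verification that the two terminal defects coming from Lemma~\ref{keylemma} land at the same point, and the final algebraic simplification of the binomial and ceiling terms into $\frac1m\binom{s+1}{2}+\lceil\frac{s}{m}\rceil\bigl(\frac{\alpha-1}{2}-i\bigr)$, must be carried out with care, though each is routine once the periodicity is in hand.
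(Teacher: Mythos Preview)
Your approach is essentially the paper's: establish $d_1$-periodicity of $\m_{\triangle}$ via Lemma~\ref{keylemma} applied to the facets $\F_0(\triangle)$ and $\F_1(\triangle)$, then project to $\ZZ{\alpha}$ and read off the values using Lemma~\ref{prop1}. Your treatment of the case $s\equiv -1\bmod m$ is in fact more careful than the paper's, which asserts that both facets are balanced arithmetic progressions; this is literally false when $s\equiv -1$ (each misses one residue), and your computation that the two defects occur at the same argument $x=a-d_1-d_2$ is exactly what is needed to rescue the conclusion.
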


\begin{proof}
Since $d_2$ and $d_2-d_1$ are invertible and $s\equiv 0$ or $-1\bmod{m}$, it follows that $\F_1(\triangle)=\AP(a,d_2,s)$ and $\F_0(\triangle)=\AP(a+(s-1)d_1,d_2-d_1,s)$ are balanced in $\ZZ{m}$. Therefore, $\m_{\F_1(\triangle)}(x)=\m_{\F_0(\triangle)}(x)=\frac{1}{m}\binom{s+1}{2}$ for all $x\in\ZZ{m}$. It follows, from Lemma~\ref{keylemma}, that
$$
\m_{\triangle}(x) - \m_{\triangle}(x+d_1) = \m_{\F_0(\triangle)}(x) - \m_{\F_1(\triangle)}(x+d_1) = 0,
$$
for all $x\in\ZZ{m}$. The first identity is then proved.

The second identity, in the case where $d_1$ is invertible, comes from the fact that $\triangle$ is balanced, by Theorem~\ref{thm2}, in this case. Suppose now that $\alpha=\gcd(d_1,m)\ge 2$ and consider the projected triangle $\pi_{\alpha}(\triangle)$ in $\ZZ{\alpha}$. Then, since $\pi_{\alpha}(\triangle)=\AS(\pi_{\alpha}(a),(0,\pi_{\alpha}(d_2)),s)$ and $\pi_{\alpha}(d_2)$ is invertible in $\ZZ{\alpha}$, the multiplicity function $\m_{\pi_{\alpha}(\triangle)}$ is entirely determined by Lemma~\ref{prop1}. Let $s=\lambda m+\mu$ be the Euclidean division of $s$ by $m$, where $\mu\in\{0,m-1\}$. Then, the Euclidean division of $s$ by $\alpha$ is $s=\lambda\frac{m}{\alpha}\alpha$, if $\mu=0$, and $\left((\lambda+1)\frac{m}{\alpha}-1\right)\alpha + (\alpha-1)$, if $\mu=m-1$. The multiplicity function $\m_{\pi_{\alpha}(\triangle)}$ verifies
$$
\m_{\pi_{\alpha}(\triangle)}(\pi_{\alpha}(a)+i\pi_{\alpha}(d)) = 
\left\{\begin{array}{ll}
\displaystyle\binom{\lambda \frac{m}{\alpha}+1}{2}\alpha - \left\lceil\frac{s-i}{\alpha}\right\rceil i  & ,\ \text{if}\ \mu=0, \\
 \ \\
\displaystyle\binom{(\lambda+1) \frac{m}{\alpha}}{2}\alpha + \left\lceil\frac{s-i}{\alpha}\right\rceil (\alpha-1-i)  & ,\ \text{if}\ \mu=m-1, \\
\end{array}\right.
$$
for all $i\in[0,\alpha-1]$. Moreover, by the first identity, since $\m_{\triangle}(x+\alpha)=\m_{\triangle}(x)$ for all $x\in\ZZ{m}$, we have
$$
\m_{\triangle}(a+id) = \frac{\alpha}{m}\times \m_{\pi_{\alpha}(\triangle)}(\pi_{\alpha}(a)+i\pi_{\alpha}(d)),
$$
for all $i\in[0,\alpha-1]$. Finally, since
$$
\left\lceil\frac{s-i}{\alpha}\right\rceil = \left\{\begin{array}{ll}
\displaystyle\lambda\frac{m}{\alpha} & ,\ \text{if}\ \mu=0,\ i\in[0,\alpha-1], \\ \ \\
\displaystyle\left(\lambda+1\right)\frac{m}{\alpha} & ,\ \text{if}\ \mu=m-1,\ i\in[0,\alpha-2], \\ \ \\
\displaystyle\left(\lambda+1\right)\frac{m}{\alpha}-1 & ,\ \text{if}\ \mu=m-1,\ i=\alpha-1, \\
\end{array}\right.
$$
it follows that
$$
\m_{\triangle}(a+id) = \binom{\lambda\frac{m}{\alpha}+1}{2}\frac{\alpha^{2}}{m}-\lambda i = \frac{\lambda(\lambda m+\alpha)}{2} - \lambda i = \frac{1}{m}\binom{\lambda m +1}{2} + \lambda \left(\frac{\alpha-1}{2}-i\right),
$$
for all $i\in[0,\alpha-1]$ if $\mu=0$, and
$$
\m_{\triangle}(a+id)
\begin{array}[t]{l}
 = \displaystyle\binom{(\lambda+1)\frac{m}{\alpha}}{2}\frac{\alpha^2}{m} + (\lambda+1)(\alpha-1-i) \\ \ \\
 = \displaystyle\frac{(\lambda+1)((\lambda+1)m-\alpha)}{2} + (\lambda+1)(\alpha-1-i) \\ \ \\
 = \displaystyle\frac{1}{m}\binom{(\lambda+1)m}{2} + (\lambda+1)\left(\frac{\alpha-1}{2}-i\right),
\end{array}
$$
for all $i\in[0,\alpha-1]$ if $\mu=m-1$. This completes the proof.
\end{proof}

For example, for $m=12$, $a=0$, $d_1=1$ and $d_2=5$, we obtain that $d_2-d_1=4$, $\gcd(d_1-d_2,m)=4$ and the multiplicity function of $\triangle=\AS(a,(d_1,d_2),m)$ is given in Table~\ref{tab1}.

\begin{table}
\begin{tabular}{|c||c|c|c|c|c|c|c|c|c|c|c|c|}
\hline
$x$ & $0$ & $1$ & $2$ & $3$ & $4$ & $5$ & $6$ & $7$ & $8$ & $9$ & $10$ & $11$ \\
\hline
$\m_{\triangle}(x)$ & $5$ & $6$ & $7$ & $8$ & $5$ & $6$ & $7$ & $8$ & $5$ & $6$ & $7$ & $8$ \\
\hline
\end{tabular} \\ \ \\
\caption{Multiplicity function of $\triangle=\AS(0,(1,5),12)$ in $\ZZ{12}$}\label{tab1}
\end{table}

\subsection{In dimension 3}

In this subsection, we only consider the arithmetic tetrahedron $\AS(a,(d_1,d_2,d_3),s)$ in $\ZZ{m}$. We determine necessary and sufficient conditions on the common differences $d_1$, $d_2$, $d_3$, $d_2-d_1$, $d_3-d_2$ and $d_1-d_3$ of $\AS(a,(d_1,d_2,d_3),s)$, depicted in Figure~\ref{fig5}, for being balanced in $\ZZ{m}$.

\begin{figure}
\begin{tikzpicture}[scale=0.5]
\node[draw] at (0,0) (0) {\tiny $V_0$};
\node[draw] at (0,-4) (1) {\tiny $V_1$};
\node[draw] at (4,0) (2) {\tiny $V_2$};
\node[draw] at (4,2) (3) {\tiny $V_3$};

\path[->] (0) edge node [left] {\tiny $d_1$} (1);
\path[->] (0) edge node [above] {\tiny $d_2$} (2);
\path[->] (0) edge node [sloped,above] {\tiny $d_3$} (3);
\path[->] (2) edge node [right] {\tiny $d_3-d_2$} (3);
\path[->] (1) edge node [sloped,below] {\tiny $d_2-d_1$} (2);
\path[->] (3) edge node [sloped,above left] {\tiny $d_1-d_3$} (1);
\end{tikzpicture}
\caption{Common differences of an arithmetic tetrahedron}\label{fig5}
\end{figure}

\begin{definition}[Adjacent common differences]
Among the six common differences $d_1$, $d_2$, $d_3$, $d_2-d_1$, $d_3-d_2$ and $d_1-d_3$ of $\AS(a,(d_1,d_2,d_3),s)$, two of them are said to be adjacent if they have a vertex in common. The couple of non-adjacent common differences of $\AS(a,(d_1,d_2,d_3),s)$ are $(d_1,d_3-d_2)$, $(d_2,d_1-d_3)$ and $(d_3,d_2-d_1)$. The twelve other couples of common differences are said to be adjacent (See Figure~\ref{fig3}).
\end{definition}

\begin{figure}
\begin{tabular}{ccc}
\begin{tikzpicture}[scale=0.5]
\node[draw] at (0,0) (0) {\tiny $V_0$};
\node[draw] at (0,-4) (1) {\tiny $V_1$};
\node[draw] at (4,0) (2) {\tiny $V_2$};
\node[draw] at (4,2) (3) {\tiny $V_3$};

\path[->,very thick,color=blue] (0) edge node [left] {\tiny $d_1$} (1);
\path[->,dashed] (0) edge node [above] {\tiny $d_2$} (2);
\path[->,dashed] (0) edge node [sloped,above] {\tiny $d_3$} (3);
\path[->,very thick,color=blue] (2) edge node [right] {\tiny $d_3-d_2$} (3);
\path[->,dashed] (1) edge node [sloped,below] {\tiny $d_2-d_1$} (2);
\path[->,dashed] (3) edge node [sloped,above left] {\tiny $d_1-d_3$} (1);
\end{tikzpicture}
&
\begin{tikzpicture}[scale=0.5]
\node[draw] at (0,0) (0) {\tiny $V_0$};
\node[draw] at (0,-4) (1) {\tiny $V_1$};
\node[draw] at (4,0) (2) {\tiny $V_2$};
\node[draw] at (4,2) (3) {\tiny $V_3$};

\path[->,dashed] (0) edge node [left] {\tiny $d_1$} (1);
\path[->,very thick,color=blue] (0) edge node [above] {\tiny $d_2$} (2);
\path[->,dashed] (0) edge node [sloped,above] {\tiny $d_3$} (3);
\path[->,dashed] (2) edge node [right] {\tiny $d_3-d_2$} (3);
\path[->,dashed] (1) edge node [sloped,below] {\tiny $d_2-d_1$} (2);
\path[->,very thick,color=blue] (3) edge node [sloped,above left] {\tiny $d_1-d_3$} (1);
\end{tikzpicture}
&
\begin{tikzpicture}[scale=0.5]
\node[draw] at (0,0) (0) {\tiny $V_0$};
\node[draw] at (0,-4) (1) {\tiny $V_1$};
\node[draw] at (4,0) (2) {\tiny $V_2$};
\node[draw] at (4,2) (3) {\tiny $V_3$};

\path[->,dashed] (0) edge node [left] {\tiny $d_1$} (1);
\path[->,dashed] (0) edge node [above] {\tiny $d_2$} (2);
\path[->,very thick,color=blue] (0) edge node [sloped,above] {\tiny $d_3$} (3);
\path[->,dashed] (2) edge node [right] {\tiny $d_3-d_2$} (3);
\path[->,very thick,color=blue] (1) edge node [sloped,below] {\tiny $d_2-d_1$} (2);
\path[->,dashed] (3) edge node [sloped,above left] {\tiny $d_1-d_3$} (1);
\end{tikzpicture} \\ \ \\
$d_1$ and $d_3-d_2$ & $d_2$ and $d_1-d_3$ & $d_3$ and $d_2-d_1$ \\
\end{tabular}
\caption{Non-adjacent common differences of $\AS(a,(d_1,d_2,d_3),s)$}\label{fig3}
\end{figure}

\begin{theorem}\label{thmarithdim2}
Let $m$ and $s$ be two positive integers and let $a,d_1,d_2,d_3\in\ZZ{m}$. Let $D := \left\{ d_1,d_2,d_3,d_2-d_1,d_3-d_2,d_1-d_3 \right\}$ be the set of common differences of the arithmetic tetrahedron $\triangle=\AS(a,(d_1,d_2,d_3),s)$. If $\triangle$ is balanced in $\ZZ{m}$ and
\begin{enumerate}[i)]
\item\label{casei}
$m$ is odd, then all the elements of $D$ are invertible.
\item\label{caseii}
$m$ is even, then all the elements of $D$ are invertible, except two of them, say $\delta_1$ and $\delta_2$, which are non-adjacent and such that $\gcd(\delta_1,m)=\gcd(\delta_2,m)=2$.
\end{enumerate}
\end{theorem}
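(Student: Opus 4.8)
The plan is to reduce everything to balanced arithmetic tetrahedra over prime-power moduli and then to a combinatorial analysis of the four facets of $\triangle$. First I would invoke the Projection Theorem~\ref{thmproj}: if $\triangle=\AS(a,(d_1,d_2,d_3),s)$ is balanced in $\ZZ m$, then its reduction $\AS(\pi_{p^k}(a),(\pi_{p^k}(d_1),\pi_{p^k}(d_2),\pi_{p^k}(d_3)),s)$ is balanced in $\ZZ{p^k}$ for every prime power $p^k$ exactly dividing $m$. A common difference $\delta$ is invertible modulo $m$ iff $\pi_p(\delta)\neq 0$ for all primes $p\mid m$, and $\gcd(\delta,m)=2$ iff $\pi_2(\delta)=0$, $\pi_p(\delta)\neq0$ for all odd $p\mid m$, and (when $4\mid m$) $\delta\equiv 2\bmod 4$. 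Hence both parts of the theorem reduce to the cases $m=p$ odd, $m=2$, and $m=2^k$.

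Next I would exploit Lemma~\ref{keylemma} applied to $\triangle$ itself. Since $\m_{\triangle}$ is constant, its left-hand side vanishes, giving $\m_{\F_i}(x)=\m_{\F_0}(x-d_i)$ for $i=1,2,3$ (with $d_0:=0$): the four facets are mutual translates, hence simultaneously balanced and otherwise share one multiplicity-function shape. I would encode a configuration by the four vertex values $\{0,d_1,d_2,d_3\}$; by Proposition~\ref{facearith} the six common differences in $D$ are exactly their pairwise differences, adjacency means sharing a vertex, and the three non-adjacent pairs are the three perfect matchings of the four values. Each facet is carried by three of these four values, and its three common differences are their pairwise differences, so modulo a prime $p$ a facet has $0$, $1$, or $3$ non-invertible common differences according as its three values are distinct, have exactly one coincidence, or all coincide.

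Then I would run a shape analysis. By Theorem~\ref{balasn2} a facet with three invertible common differences is balanced or generic; by Lemma~\ref{prop1} (after normalising to the form $\AS(b,(0,e),s)$ via the multiset identities) a facet with exactly one non-invertible common difference is non-balanced with a strictly monotone multiplicity function of $p$ distinct values; a facet with three non-invertible common differences is constant. Because the facets are mutual translates, these shapes cannot mix. The constant shape forces all six common differences to vanish, i.e. a constant, non-balanced tetrahedron, which rules out any triple coincidence. For $p$ odd I would then exclude the remaining coincidences: a single coincidence makes a generic facet a translate of a monotone one, and a double ($2{+}2$) coincidence makes every facet monotone; in each case a direct comparison of multiplicity functions---via the closed form of Lemma~\ref{prop1}, or via summing over the rows $\R_k(\triangle)=\AS(a+kd_3,(d_1,d_2),s-k)$---produces a non-constant $\m_{\triangle}$, a contradiction. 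Only the all-distinct configuration survives, which gives part~(\ref{casei}).

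For $p=2$ the four values lie in $\ZZ 2$, so by pigeonhole a coincidence is forced and the all-distinct case is impossible; excluding triple coincidences as before leaves exactly the $2{+}2$ type, whose two vanishing common differences form a non-adjacent pair $\{\delta_1,\delta_2\}$, with every other common difference odd. This already yields $2\mid\delta_1,\delta_2$. To upgrade this to $\gcd(\delta_i,m)=2$ I would work modulo $2^k$: there the facets are triangles with a single non-invertible common difference of controlled $2$-adic valuation, and Proposition~\ref{propbalasn2} expresses their multiplicity functions in terms of $\gcd(\delta_i,2^k)$; the translate relations $\m_{\F_i}(x)=\m_{\F_0}(x-d_i)$ then force these gcd's to coincide and to equal $2$, giving part~(\ref{caseii}). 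The hard part will be precisely these quantitative steps---the odd-$p$ exclusion of the one- and two-coincidence configurations and the extraction of the exact value $\gcd(\delta_i,m)=2$ rather than merely $2\mid\delta_i$---where the qualitative ``mutual translate'' information no longer suffices and one must compare the actual multiplicity functions (and control the residue class of $s$ so that Lemma~\ref{prop1} and Proposition~\ref{propbalasn2} genuinely apply); everything else is bookkeeping on the four-value configuration.
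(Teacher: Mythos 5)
Your skeleton is sound and is organized genuinely differently from the paper's proof. The paper does not phrase anything in terms of ``the four facets are mutual translates''; instead it iterates projections until every common difference of the projected tetrahedron is either invertible or zero, and then disposes of each degenerate normal form by a dedicated non-balancedness lemma: Lemma~\ref{prop8} for $\AS(a,(0,0,d),s)$, Lemma~\ref{prop2} for $\AS(a,(0,d_1,d_2),s)$ with the other differences invertible, Lemma~\ref{prop12} for $\AS(a,(0,d,d),s)$ (which is what isolates $\alpha=2$ and $s$ even), and Lemma~\ref{prop3} in $\ZZ{4}$ to pin down $\gcd(\delta_i,m)=2$. Your translate relation $\m_{\F_j}(y)=\m_{\F_i}(y-d_j+d_i)$, read off from Lemma~\ref{keylemma} when $\m_{\triangle}$ is constant, together with the classification by the four vertex values $\{0,d_1,d_2,d_3\}$, is a cleaner organizing principle: it kills the constant-facet (triple coincidence) cases instantly, it gives the mod-$2$ pigeonhole forcing the $2{+}2$ matching, and in the one-coincidence case mod an odd prime with $s\equiv 0,-1\bmod p$ it yields an immediate contradiction (a balanced facet cannot be a translate of the strictly monotone facet of Lemma~\ref{prop1}), which is arguably slicker than the paper's proof of Lemma~\ref{prop2}.

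That said, what you have deferred to ``direct comparison of multiplicity functions'' is exactly the content of Lemmas~\ref{prop2}, \ref{prop12} and \ref{prop3}, and in two places the qualitative translate argument genuinely cannot close on its own. First, in the $2{+}2$ configuration mod an odd prime \emph{all four} facets have exactly one vanishing common difference, so all four share the monotone shape and the translate relation is automatically consistent at the level of value multisets; you must actually compute, as Lemma~\ref{prop12} does, that $\m_{\triangle}(a+id)-\m_{\triangle}(a+(i-1)d)=\lambda(m-2i)$ (after first forcing $s\equiv 0\bmod m$), which is nonzero for odd $m$. One can alternatively compare the \emph{directions} of monotonicity of two facets via the translate relation, but either way a computation with the closed form of Lemma~\ref{prop1} is unavoidable. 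Second, in the mod-$2^k$ step the admissible sizes include $s\equiv -2\bmod{2^k}$, which falls outside the hypothesis $s\equiv 0$ or $-1\bmod m$ of Proposition~\ref{propbalasn2} that you invoke; the paper's Lemma~\ref{prop3} avoids this by working directly in $\ZZ{4}$ with $s=4\lambda+\mu$, $\mu\in\{0,2\}$ (the parity of $s$ having been forced by Lemma~\ref{prop12} applied mod $2$). You flag both issues yourself as ``the hard part,'' and the tools you name are the right ones, but as written the proposal establishes the combinatorial frame while leaving these decisive quantitative exclusions to be carried out.
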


\begin{figure}
\begin{tabular}{ccc}
\begin{tikzpicture}[scale=0.5]
\node[draw] at (0,0) (0) {\tiny $V_0$};
\node[draw] at (0,-4) (1) {\tiny $V_1$};
\node[draw] at (4,0) (2) {\tiny $V_2$};
\node[draw] at (4,2) (3) {\tiny $V_3$};

\path[->,very thick,color=blue] (0) edge node [left] {\tiny $0$} (1);
\path[->,very thick,color=blue] (0) edge node [above] {\tiny $0$} (2);
\path[->] (0) edge node [sloped,above] {\tiny $d$} (3);
\path[->] (2) edge node [right] {\tiny $d$} (3);
\path[->,very thick,color=blue] (1) edge node [sloped,below] {\tiny $0$} (2);
\path[->] (3) edge node [sloped,above left] {\tiny $-d$} (1);
\end{tikzpicture}
&
\begin{tikzpicture}[scale=0.5]
\node[draw] at (0,0) (0) {\tiny $V_0$};
\node[draw] at (0,-4) (1) {\tiny $V_1$};
\node[draw] at (4,0) (2) {\tiny $V_2$};
\node[draw] at (4,2) (3) {\tiny $V_3$};

\path[->,very thick,color=blue] (0) edge node [left] {\tiny $0$} (1);
\path[->] (0) edge node [above] {\tiny $d_1$} (2);
\path[->] (0) edge node [sloped,above] {\tiny $d_2$} (3);
\path[->] (2) edge node [right] {\tiny $d_2-d_1$} (3);
\path[->] (1) edge node [sloped,below] {\tiny $d_1$} (2);
\path[->] (3) edge node [sloped,above left] {\tiny $-d_2$} (1);
\end{tikzpicture}
&
\begin{tikzpicture}[scale=0.5]
\node[draw] at (0,0) (0) {\tiny $V_0$};
\node[draw] at (0,-4) (1) {\tiny $V_1$};
\node[draw] at (4,0) (2) {\tiny $V_2$};
\node[draw] at (4,2) (3) {\tiny $V_3$};

\path[->,very thick,color=blue] (0) edge node [left] {\tiny $0$} (1);
\path[->] (0) edge node [above] {\tiny $d$} (2);
\path[->] (0) edge node [sloped,above] {\tiny $d$} (3);
\path[->,very thick,color=blue] (2) edge node [right] {\tiny $0$} (3);
\path[->] (1) edge node [sloped,below] {\tiny $d$} (2);
\path[->] (3) edge node [sloped,above left] {\tiny $-d$} (1);
\end{tikzpicture} \\ \ \\
$\AS(a,(0,0,d),s)$ & $\AS(a,(0,d_1,d_2),s)$ & $\AS(a,(0,d,d),s)$ \\
\end{tabular}
\caption{Arithmetic tetrahedra with a common difference equal to zero}\label{fig6}
\end{figure}

The proof of this theorem is based on the following four lemmas, where we study the multiplicity function of the arithmetic tetrahedra with at least one common difference equal to zero. A representation of the common differences of these arithmetic tetrahedra can be found in Figure~\ref{fig6} and Figure~\ref{fig7}.

\begin{lemma}\label{prop8}
Let $m$ and $s$ be two positive integers and let $a,d\in\ZZ{m}$ such that $d$ is invertible. Then, the arithmetic tetrahedron $\AS(a,(0,0,d),s)$ is not balanced in $\ZZ{m}$.
\end{lemma}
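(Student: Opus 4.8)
The plan is to mirror the proof of Lemma~\ref{prop1}, replacing the constant \emph{segment} rows of the two-dimensional case by the constant \emph{triangular} rows of the tetrahedron. Write $\triangle = \AS(a,(0,0,d),s)$. By Proposition~\ref{facearith}, the $k$th row of $\triangle$ is $\R_k(\triangle) = \AS(a+kd,(0,0),s-k)$, whose two common differences both vanish, so it is a constant triangle in which every entry equals $a+kd$. By Proposition~\ref{propsize} this $2$-simplex of size $s-k$ has cardinality $\binom{s-k+1}{2}$. Hence the $k$th row contributes the value $a+kd$ with multiplicity $\binom{s-k+1}{2}$, and contributes nothing to any other value.

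Next I would collect the rows whose common value agrees modulo $m$. Since $d$ is invertible, the residues $a+kd$ for $k\in[0,m-1]$ run over all of $\ZZ{m}$, and $a+kd=a+id$ in $\ZZ{m}$ if and only if $k\equiv i \pmod m$. Summing the row contributions therefore yields the explicit formula
$$
\m_{\triangle}(a+id) = \sum_{\substack{0\le k\le s-1\\ k\equiv i \hspace{-6pt}\pmod m}} \binom{s-k+1}{2},
$$
for every $i\in[0,m-1]$. Exactly as in Lemma~\ref{prop1}, reindexing $k=jm+i$ would give a closed form in terms of the Euclidean division of $s$ by $m$; but for the present qualitative statement only the monotone behaviour of this sum is needed.

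Finally, to conclude that $\triangle$ is not balanced it suffices to exhibit two elements of distinct multiplicity. Comparing consecutive residues and using the telescoping identity $\binom{N+1}{2}-\binom{N}{2}=N$, I would pair the term indexed by $k=jm+i$ in $\m_{\triangle}(a+id)$ with the term indexed by $k=jm+i+1$ in $\m_{\triangle}(a+(i+1)d)$; each matched pair contributes $s-i-jm\ge 0$, which is strictly positive whenever the paired term of the shorter sum is present, and any unmatched leading term of $\m_{\triangle}(a+id)$ is a nonnegative binomial. This shows $\m_{\triangle}(a+id)\ge \m_{\triangle}(a+(i+1)d)$ for all $i$, with strict inequality at $i=0$, so that
$$
\m_{\triangle}(a) > \m_{\triangle}(a+d).
$$
Since $d$ is invertible (hence $a\ne a+d$ in $\ZZ{m}$ for $m\ge 2$), the multiplicity function is not constant and $\triangle$ is not balanced. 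The only point requiring care is the strictness: because the later multiplicities may vanish (when $s<m$) the chain is only weakly decreasing overall, so rather than claim a full strict chain I would isolate the single strict drop at $i=0$, which the telescoping identity delivers unconditionally.
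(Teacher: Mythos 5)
Your proof is correct, but it takes a genuinely different route from the paper's. The paper disposes of this lemma in two lines by combining Lemma~\ref{keylemma} with Lemma~\ref{prop1}: applying the key identity with $x=a$ and the facets $\F_1(\triangle)=\AS(a,(0,d),s)$ and $\F_3(\triangle)=\AS(a,(0,0),s)$ gives $\m_{\triangle}(a)-\m_{\triangle}(a+d)=\m_{\F_1(\triangle)}(a)-\m_{\F_3(\triangle)}(a+d)$, where the second term vanishes because $\F_3(\triangle)$ is the constant triangle and the first is strictly positive by the explicit formula of Lemma~\ref{prop1}. You bypass the key lemma entirely and instead slice the tetrahedron into its rows $\R_k(\triangle)=\AS(a+kd,(0,0),s-k)$, constant triangles of cardinality $\binom{s-k+1}{2}$, arriving at the explicit formula $\m_{\triangle}(a+id)=\sum_{k\equiv i\bmod m}\binom{s-k+1}{2}$ and extracting the strict drop $\m_{\triangle}(a)>\m_{\triangle}(a+d)$ by telescoping consecutive binomials. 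Both arguments isolate the same phenomenon, namely that $a$ is over-represented because of a constant substructure; the paper's version is shorter given the machinery already in place, while yours is self-contained, is the exact three-dimensional analogue of the proof of Lemma~\ref{prop1}, and yields the full multiplicity function of $\AS(a,(0,0,d),s)$ rather than a single inequality. Your caution about strictness --- claiming only the drop at $i=0$ rather than a fully strict chain, since later multiplicities may vanish when $s<m$ --- is exactly the right care to take, and your pairing argument does deliver that one strict inequality unconditionally for $s\ge 1$ (and, as you note, the statement tacitly assumes $m\ge 2$, as does the paper's own proof).
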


\begin{proof}
Let $\triangle=\AS(a,(0,0,d),s)$. By Lemma~\ref{keylemma}, we obtain
$$
\m_{\triangle}(a) - \m_{\triangle}(a+d) = \m_{\F_1(\triangle)}(a) - \m_{\F_3(\triangle)}(a+d).
$$
Since $\F_3(\triangle) = \AS(a,(0,0),s)$, i.e., the constant triangle of size $s$ where all the terms are equal to $a$, we have $\m_{\F_3(\triangle)}(a+d)=0$. Since $\F_1(\triangle)=\AS(a,(0,d),s)$, it follows from Lemma~\ref{prop1} that
$$
\m_{\F_1(\triangle)}(a) = \binom{\lambda+1}{2}m + \left\lceil\frac{s-i}{m}\right\rceil\mu > 0,
$$
where $s=\lambda m + \mu$ is the Euclidean division of $s$ by $m$. This leads to the inequality $\m_{\triangle}(a+d) < \m_{\triangle}(a)$ and thus the multiplicity function $\m_{\triangle}$ is not constant on $\ZZ{m}$. This concludes the proof.
\end{proof}

\begin{lemma}\label{prop2}
Let $m$ and $s$ be two positive integers and let $a,d_1,d_2\in\ZZ{m}$ such that $d_1$, $d_2$ and $d_2-d_1$ are invertible. Then, the arithmetic tetrahedron $\AS(a,(0,d_1,d_2),s)$ is not balanced in $\ZZ{m}$.
\end{lemma}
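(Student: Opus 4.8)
The plan is to show \emph{directly} that the multiplicity function $\m_\triangle$ of $\triangle=\AS(a,(0,d_1,d_2),s)$ is non-constant by exhibiting two residues whose multiplicities differ; this is cleaner than arguing by contradiction from balancedness. Writing the generating vector as $(c_1,c_2,c_3)=(0,d_1,d_2)$ and recalling the convention $c_0:=0$ of Lemma~\ref{keylemma}, I first single out the two facets carrying the zero common difference. By Proposition~\ref{facearith} these are $\F_2(\triangle)=\AS(a,(0,d_2),s)$ and $\F_3(\triangle)=\AS(a,(0,d_1),s)$, which are exactly the degenerate triangles whose multiplicity functions are computed explicitly in Lemma~\ref{prop1}. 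With $s=\lambda m+\mu$ the Euclidean division, both obey the strictly decreasing chains $\m_{\F_2}(a)>\m_{\F_2}(a+d_2)>\cdots$ and $\m_{\F_3}(a)>\m_{\F_3}(a+d_1)>\cdots$, with common maximal value $v_0:=\binom{\lambda+1}{2}m+\lceil s/m\rceil\mu$ attained \emph{only} at $a$.

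Next I would apply Lemma~\ref{keylemma} with the pair $(i,j)=(2,3)$, which (neither index being $0$) involves only $\F_2$ and $\F_3$:
$$
\m_\triangle(x+d_2)-\m_\triangle(x+d_1)=\m_{\F_3}(x+d_2)-\m_{\F_2}(x+d_1),
$$
valid for every $x\in\ZZ{m}$. The decisive step is to evaluate this at $x=a-d_2$, so that the left-hand argument $x+d_2$ becomes $a$ while the right-hand argument $x+d_1$ becomes $a+(d_1-d_2)$. Since $d_2-d_1$ is invertible it is nonzero, hence $a+(d_1-d_2)\neq a$; by the strict monotonicity of $\m_{\F_2}$ this forces $\m_{\F_2}(a+(d_1-d_2))<v_0$. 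Combined with $\m_{\F_3}(a)=v_0$, the identity yields
$$
\m_\triangle(a)-\m_\triangle\bigl(a+(d_1-d_2)\bigr)=v_0-\m_{\F_2}\bigl(a+(d_1-d_2)\bigr)>0,
$$
so $\m_\triangle$ is not constant on $\ZZ{m}$ and $\triangle$ is not balanced.

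I do not anticipate a genuine obstacle: the only ingredients are Proposition~\ref{facearith} (to read off the two facets), the explicit strictly decreasing multiplicity formula of Lemma~\ref{prop1} for a triangle with a zero common difference, and the facet identity of Lemma~\ref{keylemma}. The one point demanding care is the bookkeeping of which facet is which --- in particular that \emph{both} $\F_2$ and $\F_3$ are of the degenerate type and share the \emph{same} unique maximizer $a$ --- together with the fact that it is the invertibility of $d_2-d_1$ (not that of $d_1$ or $d_2$) that is used precisely to guarantee $a+(d_1-d_2)\neq a$. The invertibility of $d_1$ and of $d_2$ separately enters only to license the application of Lemma~\ref{prop1} to each of the two facets.
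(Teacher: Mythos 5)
Your proof is correct, and it takes a genuinely different --- and more economical --- route than the paper's. The paper applies Lemma~\ref{keylemma} to the index pair $(1,3)$, comparing the degenerate facet $\F_3(\triangle)=\AS(a,(0,d_1),s)$ against the nondegenerate facet $\F_1(\triangle)=\AS(a,(d_1,d_2),s)$; to exploit that comparison it must first know that $\F_1(\triangle)$ is balanced, which forces a preliminary reduction (project onto $\ZZ{p}$ for an odd prime $p$ dividing $m$, note that balancedness would force $s\equiv 0,-1$ or $-2\bmod{p}$, and peel off a balanced facet to dispose of the case $s\equiv-2$) before Theorem~\ref{thm2} can be invoked. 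You instead apply Lemma~\ref{keylemma} to the pair $(2,3)$, so that both facets involved, $\F_2(\triangle)=\AS(a,(0,d_2),s)$ and $\F_3(\triangle)=\AS(a,(0,d_1),s)$, are of the degenerate type governed by the explicit formula of Lemma~\ref{prop1}; they share the same maximal multiplicity $v_0$, attained only at $a$, and evaluating your identity at $x=a-d_2$ gives $\m_{\triangle}(a)-\m_{\triangle}(a+d_1-d_2)=v_0-\m_{\F_2(\triangle)}(a+d_1-d_2)>0$ because $d_2-d_1$ invertible forces $a+d_1-d_2\neq a$. This works uniformly for every $m\ge2$ and every size $s$, with no projection, no primality reduction and no case analysis on $s$, and it correctly isolates the role of each hypothesis. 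One pedantic caveat: the chain of strict inequalities asserted in Lemma~\ref{prop1} is not literally strict when $s<m$ (the multiplicities are eventually all zero), but the only consequence you use is that the maximum of $\m_{\F_2(\triangle)}$ is attained uniquely at $a$, which does hold for every $s\ge1$; so this small imprecision lies in the paper's statement of Lemma~\ref{prop1}, not in your argument.
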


\begin{proof}
First, since $d_1$, $d_2$ and $d_2-d_1$ are invertible, we know that $m$ is an odd number. Assume, without loss of generality, that $m$ is an odd prime. Indeed, if $m$ is composite, we know that $\triangle$ cannot be balanced in $\ZZ{m}$ if its projected tetrahedron $\pi_p(\triangle)$ is not balanced in $\ZZ{p}$, where $p$ is an odd prime factor of $m$. Moreover, a necessary condition on $s$ for $\triangle$ being balanced is that $m$ divides $\binom{s+2}{3}$. Since $m$ is an odd prime, this implies that $s\equiv 0,-1$ or $-2 \bmod{m}$. If $s\equiv -2\bmod{m}$, then $\triangle=\AS(a,(0,d_1,d_2),s)$ can be seen as $\triangle'\setminus \F_1(\triangle')$, where $\triangle'=\AS(a,(0,d_1,d_2),s+1)$ and $\F_1(\triangle')=\AS(a,(d_1,d_2),s+1)$. Since $s+1\equiv-1\bmod{m}$ and $d_1$, $d_2$, $d_2-d_1$ are invertible, we deduce from Theorem~\ref{thm2} that $\F_1(\triangle')$ is balanced. Therefore $\triangle$ is balanced if and only if $\triangle'$ is. This is the reason why, in this proof, we suppose that $m$ is an odd prime and that $s$ is a positive integer such that $s\equiv 0$ or $-1\bmod{m}$. First, by Lemma~\ref{keylemma}, we obtain
$$
\m_{\triangle}(x+d_2)-\m_{\triangle}(x) = \m_{\F_3(\triangle)}(x+d_2) - \m_{\F_1(\triangle)}(x),
$$
for all $x\in\ZZ{m}$. Since $d_1$, $d_2$ and $d_2-d_1$ are invertible and $s\equiv0$ or $-1\bmod{m}$, we know, from Theorem~\ref{thm2}, that $\F_1(\triangle)=\AS(a,(d_1,d_2),s)$ is a balanced triangle. Therefore,
$$
\m_{\F_1(\triangle)}(x) = \frac{1}{m}\binom{s+1}{2},
$$
for all $x\in\ZZ{m}$. Moreover, since $d_1$ is invertible, it follows from Lemma~\ref{prop1} that the multiplicity function of $\F_3(\triangle)=\AS(a,(0,d_1),s)$ verifies
$$
\m_{\F_3(\triangle)}(a) > \m_{\F_3(\triangle)}(a+d_1) > \cdots\cdots > \m_{\F_3(\triangle)}(a+(m-1)d_1).
$$
Therefore, since $\sum_{i=0}^{m-1}\m_{\F_3(\triangle)}(a+id)=\binom{s+1}{2}$, we have $\m_{\F_3(\triangle)}(a) > \frac{1}{m}\binom{s+1}{2}$. This leads to the inequality
$$
\m_{\triangle}(a) - \m_{\triangle}(a-d_2) = \m_{\F_3(\triangle)}(a) - \m_{\F_1(\triangle)}(a-d_2) = \m_{\F_3(\triangle)}(a) - \frac{1}{m}\binom{s+1}{2} > 0.
$$
Thus, the multiplicity function $\m_{\triangle}$ is not constant on $\ZZ{m}$. This concludes the proof.
\end{proof}

\begin{lemma}\label{prop12}
Let $m$ and $s$ be two positive integers and let $a,d\in\ZZ{m}$ such that $d$ is invertible. If the arithmetic tetrahedron $\mathrm{AS}(a,(0,d,d),s)$ is balanced, then $m=2$ and $s$ is even.
\end{lemma}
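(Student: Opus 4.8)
The plan is to feed two carefully chosen facets of $\triangle=\AS(a,(0,d,d),s)$ into Lemma~\ref{keylemma}, to recognize both facets as arithmetic triangles of the type analyzed in Lemma~\ref{prop1}, and then to turn the balance hypothesis into a congruence that cannot hold unless $m=2$. First I would use Proposition~\ref{facearith} with $d_1=0$, $d_2=d_3=d$ to read off $\F_3(\triangle)=\AS(a,(0,d),s)$ and $\F_1(\triangle)=\AS(a,(d,d),s)$. The second facet is not literally in the $(0,\ast)$ form, so I would rewrite it through the multiset identities for arithmetic simplices as $\F_1(\triangle)=\AS(a+(s-1)d,(0,-d),s)$. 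Since $d$ and $-d$ are both invertible, Lemma~\ref{prop1} applies to each, and (writing $s=\lambda m+\mu$) both multiplicity functions are expressed through the single strictly decreasing function $\phi(i):=\binom{\lambda+1}{2}m+\lceil\frac{s-i}{m}\rceil(\mu-i)$ on $i\in[0,m-1]$: namely $\m_{\F_3(\triangle)}(a+\ell d)=\phi(\ell\bmod m)$ and, after the reflection induced by the difference $-d$, $\m_{\F_1(\triangle)}(a+\ell d)=\phi((s-1-\ell)\bmod m)$.

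Next I would invoke Lemma~\ref{keylemma} with $i=1$, $j=3$, which gives $\m_{\triangle}(x+d)-\m_{\triangle}(x)=\m_{\F_3(\triangle)}(x+d)-\m_{\F_1(\triangle)}(x)$ for all $x\in\ZZ{m}$. If $\triangle$ is balanced then $\m_{\triangle}$ is constant, the left-hand side vanishes identically, and we are left with $\m_{\F_3(\triangle)}(x+d)=\m_{\F_1(\triangle)}(x)$ for all $x$. Evaluating at $x=a+jd$ and substituting the two formulas above converts this into $\phi\big((j+1)\bmod m\big)=\phi\big((s-1-j)\bmod m\big)$ for every integer $j$.

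The decisive step is that $\phi$ is strictly decreasing on $[0,m-1]$ by Lemma~\ref{prop1}, hence injective there; since both arguments already lie in $[0,m-1]$, we must have $j+1\equiv s-1-j\pmod m$, that is $2j\equiv s-2\pmod m$, for every $j$. Comparing $j=0$ with $j=1$ forces $2\equiv 0\pmod m$, so $m\mid 2$; as $m\ge 2$ this gives $m=2$, and then $s-2\equiv 0\pmod 2$, i.e. $s$ is even. The only genuine subtlety I anticipate is the middle paragraph's bookkeeping: getting $\F_1(\triangle)$ into the $(0,-d)$ normal form so that Lemma~\ref{prop1} yields the \emph{same} $\phi$ up to the reflection $\ell\mapsto s-1-\ell$, and then using the full injectivity of $\phi$ over all residues (rather than at extremal values only) so that the congruence must hold for every $j$ — which is exactly what upgrades a single constraint on $s$ to the divisibility $m\mid 2$.
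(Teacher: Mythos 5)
Your proposal follows essentially the same route as the paper: both proofs feed the facets $\F(\triangle)=\AS(a,(0,d),s)$ and $\F_1(\triangle)=\AS(a,(d,d),s)=\AS(a+(s-1)d,(0,-d),s)$ into Lemma~\ref{keylemma} and then exploit the explicit multiplicity formula of Lemma~\ref{prop1}; your reduction of the balance hypothesis to $\m_{\F_3(\triangle)}(x+d)=\m_{\F_1(\triangle)}(x)$ and the reflection bookkeeping $\m_{\F_1(\triangle)}(a+\ell d)=\phi((s-1-\ell)\bmod m)$ are exactly what the paper does. The only divergence is the endgame, and that is where you should be careful: the strict decrease $\phi(0)>\phi(1)>\cdots>\phi(m-1)$ displayed in the proof of Lemma~\ref{prop1} is actually false when $\lambda=0$ and $s\le m-2$, since then $\phi(i)=\left\lceil\frac{s-i}{m}\right\rceil(s-i)$ vanishes for all $i\in[s,m-1]$, so $\phi$ is not injective and the inference from $\phi\bigl((j+1)\bmod m\bigr)=\phi\bigl((s-1-j)\bmod m\bigr)$ to $j+1\equiv s-1-j\pmod m$ is not justified in that range of $s$. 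The regime is harmless (taking $j=m-1$ gives $\phi(0)=\phi(s)$, i.e.\ $s=0$, a direct contradiction, so no balanced triangle exists there), but your proof as written does not say this; you should either dispose of $s\le m-2$ separately as above, or do what the paper does and avoid injectivity altogether by evaluating two specific differences explicitly, namely $\m_{\triangle}(a)-\m_{\triangle}(a-d)=\left\lceil\frac{s}{m}\right\rceil\mu$ (forcing $\mu=0$) and then $\m_{\triangle}(a+id)-\m_{\triangle}(a+(i-1)d)=\lambda(m-2i)$ (forcing $m=2$), which works uniformly in $s$.
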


\begin{proof}
Let $\triangle=\AS(a,(0,d,d),s)$. First, by Lemma~\ref{keylemma}, we obtain the identity
$$
\m_{\triangle}(x+d) - \m_{\triangle}(x) = \m_{\F_2(\triangle)}(x+d) - \m_{\F_1(\triangle)}(x),
$$
for all $x\in\ZZ{m}$. Let $s=\lambda m+\mu$ be the Euclidean division of $s$ by $m$. Since $\F_2(\triangle)=\AS(a,(0,d),s)$, we know, from Lemma~\ref{prop1}, that
$$
\m_{\F_2(\triangle)}(a+id) = \binom{\lambda+1}{2}m + \left\lceil\frac{s-i}{m}\right\rceil (\mu-i)
$$
for all $i\in[0,m-1]$. Moreover, since $\F_1(\triangle)=\AS(a,(d,d),s)=\AS(a+(s-1)d,(0,-d),s)$, by Lemma~\ref{prop1} again, we have
$$
\m_{\F_1(\triangle)}(a+(s-1-j)d) = \binom{\lambda+1}{2}m + \left\lceil\frac{s-j}{m}\right\rceil (\mu-j)
$$
for all $j\in[0,m-1]$. This leads to the identity
$$
\m_{\triangle}(a) - \m_{\triangle}(a-d)
\begin{array}[t]{l}
 = \m_{\F_2(\triangle)}(a) - \m_{\F_1(\triangle)}(a-d) \\
 \ \\
 = \m_{\F_2(\triangle)}(a) - \m_{\F_1(\triangle)}(a+(s-1-\mu)d) = \displaystyle\left\lceil\frac{s}{m}\right\rceil \mu.
\end{array}
$$
If $\triangle$ is balanced, then $\m_{\triangle}$ is constant on $\ZZ{m}$. Then, $\m_{\triangle}(a)=\m_{\triangle}(a-d)$ and $\left\lceil\frac{s}{m}\right\rceil\mu=0$. Therefore $\mu=0$ and $s\equiv0\bmod{m}$. Suppose now that $s=\lambda m$, with $\lambda\ge1$. For every integer $i\in[1,m-1]$, we deduce from the previous results that
$$
\m_{\triangle}(a+id) - \m_{\triangle}(a+(i-1)d) 
\begin{array}[t]{l}
 = \m_{\F_2(\triangle)}(a+id) - \m_{\F_1(\triangle)}(a+(i-1)d) \\
 \ \\
 = \m_{\F_2(\triangle)}(a+id) - \m_{\F_1(\triangle)}(a+(s-1-(m-i))d) \\
 \ \\
 = \lambda(m-2i).
\end{array}
$$ 
The only possibility for having $\m_{\triangle}(a+id) = \m_{\triangle}(a+(i-1)d)$ is that $m=2$ and $i=1$. This concludes the proof.
\end{proof}

\begin{figure}
\begin{tikzpicture}[scale=0.5]
\node[draw] at (0,0) (0) {\tiny $V_0$};
\node[draw] at (0,-4) (1) {\tiny $V_1$};
\node[draw] at (4,0) (2) {\tiny $V_2$};
\node[draw] at (4,2) (3) {\tiny $V_3$};

\path[->,very thick,color=blue] (0) edge node [left] {\tiny $0$} (1);
\path[->] (0) edge node [above] {\tiny $d$} (2);
\path[->] (0) edge node [sloped,above] {\tiny $-d$} (3);
\path[->,very thick,color=blue] (2) edge node [right] {\tiny $2$} (3);
\path[->] (1) edge node [sloped,below] {\tiny $d$} (2);
\path[->] (3) edge node [sloped,above left] {\tiny $d$} (1);
\end{tikzpicture}
\caption{$\AS(a,(0,d,-d),s)$ in $\ZZ{4}$, with $d=\pm1$}\label{fig7}
\end{figure}

\begin{lemma}\label{prop3}
Let $s$ be a positive integer and let $a,d\in\ZZ{4}$ such that $d$ is invertible. Then, the arithmetic tetrahedron $\AS(a,(0,d,-d),s)$ is not balanced in $\ZZ{4}$.
\end{lemma}

\begin{proof}
Let $\triangle=\AS(a,(0,d,-d),s)$. If $\triangle$ is balanced in $\ZZ{4}$, then its projection $\pi_2(\triangle) = \AS( \pi_2(a) , (0,1,1) , s )$ is balanced in $\ZZ{2}$. This implies, by Lemma~\ref{prop12}, that $s$ is even. Let $s=4\lambda + \mu$ be the Euclidean division of $s$ by $4$, where $\mu\in\{0,2\}$. Since $\F_3(\triangle) = \AS(a,(0,d),s)$ and $\F_2(\triangle) = \AS(a,(0,-d),s)$, it follows from Lemma~\ref{prop1} and Lemma~\ref{keylemma} that
$$
\m_{\triangle}(a) - \m_{\triangle}(a+2)
\begin{array}[t]{l}
= \m_{\F_3(\triangle)}(a) - \m_{\F_2(\triangle)}(a+2) \\
\ \\
= \m_{\F_3(\triangle)}(a) - \m_{\F_2(\triangle)}(a-2d) \\
\ \\
= \displaystyle 4\binom{\lambda+1}{2} + \left\lceil\frac{4\lambda+\mu}{4}\right\rceil\mu - 4\binom{\lambda+1}{2} - \left\lceil\frac{4\lambda+\mu-2}{4}\right\rceil(\mu-2) \\
\ \\
= \displaystyle \left\lceil\frac{4\lambda+\mu}{4}\right\rceil\mu - \left\lceil\frac{4\lambda+\mu-2}{4}\right\rceil(\mu-2).
\end{array}
$$
For $\mu=0$, we obtain that $\m_{\triangle}(a) - \m_{\triangle}(a+2) = 2\lambda \neq 0$. For $\mu=2$, $\m_{\triangle}(a) - \m_{\triangle}(a+2) = 2(\lambda+1) \ge 2$. In all cases, we obtain that $\m_{\triangle}(a)\neq\m_{\triangle}(a+2)$. Therefore the tetrahedron $\triangle$ is not balanced in $\ZZ{4}$.
\end{proof}

We are now ready to prove Theorem~\ref{thmarithdim2}.

\begin{proof}[Proof of Theorem~\ref{thmarithdim2}]
Let $\triangle=\AS(a,(d_1,d_2,d_3),s)$ be an arithmetic tetrahedron of size $s$, where all the common differences are
$$
D := \{d_1,d_2,d_3,d_2-d_1,d_3-d_2,d_1-d_3\}.
$$
If there exists an element $\delta$ of $D$ which is not invertible in $\ZZ{m}$ and not equal to zero, then we consider the projection of $\triangle$ into $\ZZ{\alpha}$, where $\alpha=\gcd(\delta,m)\ge 2$. The projected tetrahedron $\pi_{\alpha}(\triangle)$ is also arithmetic. In this tetrahedron, the corresponding common differences of the invertible common differences of $\triangle$ are invertible in $\ZZ{\alpha}$ and $\pi_{\alpha}(\delta)=0$. If there exists a common difference $\delta'$ of $\pi_{\alpha}(D)$ which is not invertible in $\ZZ{\alpha}$ and not equal to zero, we project again $\pi_{\alpha}(\triangle)$ into $\ZZ{\beta}$, where $\beta=\gcd(\delta',\alpha)\ge 2$. We continue until the projected tetrahedron is such that all its common differences are either invertible, or equal to zero. In the sequel, suppose that $\pi_{\alpha}(\triangle)$ is like that. Obviously, if the projected tetrahedron $\pi_{\alpha}(\triangle)$ is not balanced in $\ZZ{\alpha}$, we know that $\triangle$ cannot be balanced in $\ZZ{m}$. We distinguish different cases.
\setcounter{case}{0}
\begin{case}\label{c1}
There exist three adjacent common differences, in $\pi_{\alpha}(\triangle)$, which are equal to zero. Then, all the elements of $\pi_{\alpha}(D)$ are equal to zero and $\pi_{\alpha}(\triangle)$ is the constant tetrahedron where all terms are equal to $\pi_{\alpha}(a)$. Thus, the tetrahedron $\pi_{\alpha}(\triangle)$ is not balanced in $\ZZ{\alpha}$.
\end{case}
\begin{case}\label{c2}
There exist two adjacent common differences, in $\pi_{\alpha}(\triangle)$, which are equal to zero. Without loss of generality, suppose that $\pi_{\alpha}(d_1)=\pi_{\alpha}(d_2)=0$. Then, $\pi_{\alpha}(\triangle)=\AS(\pi_{\alpha}(a),(0,0,\pi_{\alpha}(d_3)),s)$. If $\pi_{\alpha}(d_3)=0$, this is Case~\ref{c1}. Otherwise, if $\pi_{\alpha}(d_3)$ is invertible, the tetrahedron $\pi_{\alpha}(\triangle)$ is not balanced in $\ZZ{\alpha}$, by Lemma~\ref{prop8}.
\end{case}
\begin{case}\label{c3}
There exist two non-adjacent common differences, in $\pi_{\alpha}(\triangle)$, which are equal to zero. Without loss of generality, suppose that $\pi_{\alpha}(d_1)=\pi_{\alpha}(d_3-d_2)=0$. Then, $\pi_{\alpha}(\triangle)=\AS(\pi_{\alpha}(a),(0,\pi_{\alpha}(d_2),\pi_{\alpha}(d_2)),s)$. If $\pi_{\alpha}(d_2)=0$, this is Case~\ref{c1}. Otherwise, if $\pi_{\alpha}(d_2)$ is invertible, we know from Lemma~\ref{prop12} that if $\pi_{\alpha}(\triangle)$ is balanced, then $\alpha=2$ and $s$ is even.
\end{case}
\begin{case}
There exists one common difference, in $\pi_{\alpha}(\triangle)$, which is equal to zero. Without loss of generality, suppose that $\pi_{\alpha}(d_1)=0$. If there exists a second common difference which is equal to zero, this is Case~\ref{c2} or Case~\ref{c3}, depending on the adjacency of these two common differences. If the five other common differences of $\pi_{\alpha}(\triangle)$ are invertible, we know that $\pi_{\alpha}(\triangle)$ is not balanced by Lemma~\ref{prop2}.
\end{case}
Therefore, the only possibility for $\pi_{\alpha}(\triangle)$ to be balanced in $\ZZ{\alpha}$ is that $\alpha=2$ and $s$ is even. Thus, when $m$ is odd, we have proved the result~\ref{casei}) of Theorem~\ref{thmarithdim2}. When $m$ is even, we deduce from the previous results that if $\triangle$ is balanced with a non-invertible common difference $\delta$, then the corresponding non-adjacent common difference of $\delta$ is also non-invertible and their projections in $\ZZ{2}$ are equal to zero. Moreover, since the constant tetrahedra in $\ZZ{2}$ are not balanced, it follows that the four other common differences of $\triangle$ must be invertible in $\ZZ{m}$. Suppose now that $m$ and $s$ are even and that the common differences of the balanced tetrahedron $\triangle$ are such that $d_1$ and $d_3-d_2$ are non-invertible and $d_2$, $d_3$, $d_2-d_1$, $d_1-d_3$ are invertible, where
$$
\pi_2(\triangle)=\AS(\pi_2(a),(0,1,1),s).
$$
If $\gcd(d_1,m)$ or $\gcd(d_3-d_2,m)$ is divisible by an odd number $m'$, then the projected tetrahedron $\pi_{m'}(\triangle)$ is a balanced arithmetic tetrahedron in $\ZZ{m'}$, with at least one common difference which is non-invertible, which is impossible by result~\ref{casei}) of Theorem~\ref{thmarithdim2}. Thus $\gcd(d_1,m)$ and $\gcd(d_3-d_2,m)$ are powers of two. Finally, from Lemma~\ref{prop3}, we deduce that $\gcd(d_1,m)=\gcd(d_3-d_2,m)=2$. This completes the proof.
\end{proof}

We continue by showing that there is no balanced arithmetic tetrahedron in $\ZZ{m}$ when $m$ is divisible by $3$.

\begin{theorem}
Let $m$ and $s$ be two positive integers such that $m$ is a multiple of $3$. There is no balanced arithmetic tetrahedron of size $s$ in $\ZZ{m}$.
\end{theorem}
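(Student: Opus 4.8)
The plan is to reduce the problem to the case $m=3$ by projection, and then to derive a contradiction from the structural constraint of Theorem~\ref{thmarithdim2} together with a pigeonhole count. First I would observe that, since $3$ divides $m$, the canonical projection $\pi_3:\ZZ{m}\longrightarrow\ZZ{3}$ is available. If $\triangle=\AS(a,(d_1,d_2,d_3),s)$ were balanced in $\ZZ{m}$, then by the Projection Theorem~\ref{thmproj} its projection $\pi_3(\triangle)$ would be balanced in $\ZZ{3}$. Directly from the defining formula $a_i=a+i_1d_1+\cdots+i_nd_n$ of Definition~\ref{defarith}, the projection of an arithmetic tetrahedron is again an arithmetic tetrahedron, namely $\pi_3(\triangle)=\AS(\pi_3(a),(\pi_3(d_1),\pi_3(d_2),\pi_3(d_3)),s)$. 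Hence it suffices to prove that no arithmetic tetrahedron of $\ZZ{3}$ is balanced.

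Next I would invoke part~\ref{casei}) of Theorem~\ref{thmarithdim2}. Since $3$ is odd, if an arithmetic tetrahedron $\AS(a,(d_1,d_2,d_3),s)$ is balanced in $\ZZ{3}$, then every element of the set of common differences $D=\{d_1,d_2,d_3,d_2-d_1,d_3-d_2,d_1-d_3\}$ must be invertible modulo $3$. In particular $d_1$, $d_2$ and $d_3$ are all invertible, so each of them lies in the two-element set $\{1,2\}$ of nonzero residues of $\ZZ{3}$.

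The contradiction then comes from counting: three elements $d_1,d_2,d_3$ drawn from the two-element set $\{1,2\}$ cannot be pairwise distinct, so by the pigeonhole principle two of them coincide, say $d_i=d_j$ with $i<j$. But then the corresponding difference among $d_2-d_1$, $d_3-d_2$ and $d_1-d_3$ equals $0$, which is not invertible in $\ZZ{3}$, contradicting that every element of $D$ is invertible. Therefore no balanced arithmetic tetrahedron exists in $\ZZ{3}$, and by the projection step above no balanced arithmetic tetrahedron of any size $s$ exists in $\ZZ{m}$ whenever $3\mid m$.

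Most of the difficulty here has already been absorbed into Theorem~\ref{thmarithdim2}, so I do not expect a genuine obstacle. The only new ingredient is the elementary pigeonhole observation that $\ZZ{3}$ has merely two invertible residues, which forces a repetition among $d_1,d_2,d_3$; the one point requiring mild care is checking that $\pi_3$ carries an arithmetic tetrahedron to an arithmetic tetrahedron with the projected common differences, but this is immediate from the defining formula and costs nothing.
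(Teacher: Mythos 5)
Your proposal is correct and follows essentially the same route as the paper: project to $\ZZ{3}$, invoke Theorem~\ref{thmarithdim2}(\ref{casei}) to force all six common differences to be invertible, and then observe that $d_1,d_2,d_3$ drawn from the two invertible residues $\{1,2\}$ of $\ZZ{3}$ must collide, producing a zero common difference. The only cosmetic difference is that you spell out the projection step (via Theorem~\ref{thmproj} and the persistence of the arithmetic structure under $\pi_3$) which the paper leaves implicit.
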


\begin{proof}
Let $\triangle$ be an arithmetic tetrahedron of size $s$ in $\ZZ{m}$. We consider the projected tetrahedron $\pi_3(\triangle)$ in $\ZZ{3}$. We know from Theorem~\ref{thmarithdim2} that if $\pi_3(\triangle)$ is a balanced tetrahedron, then all the common differences of $\pi_3(\triangle)$ are invertible in $\ZZ{3}$. In $\ZZ{3}$, the invertible elements are $1$ and $2$. Thus, if $d_1$, $d_2$ and $d_3$ are invertible, there are at least two of them which are equal and thus their difference is a common difference equal to zero. Therefore $\pi_3(\triangle)$ cannot be balanced in $\ZZ{3}$. This completes the proof.
\end{proof}

In the end of this subsection, we prove that the necessary conditions on the common differences of balanced arithmetic tetrahedra highlighted in Theorem~\ref{thmarithdim2} are also sufficient for certain sizes.

\begin{theorem}
Let $m$ be an odd number not divisible by $3$ and let $a,d_1,d_2,d_3\in\ZZ{m}$ such that $d_1$, $d_2$, $d_3$, $d_2-d_1$, $d_3-d_2$ and $d_1-d_3$ are invertible. Then, the arithmetic tetrahedron $\AS(a,(d_1,d_2,d_3),s)$ is balanced for all $s\equiv 0$, $-1$, or $-2\bmod{m}$.
\end{theorem}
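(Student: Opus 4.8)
The plan is to recognize the statement as exactly the three-dimensional instance of Theorem~\ref{thm2}, so that no genuinely new argument is required beyond matching the hypotheses. First I would set $n=3$ and observe that the arithmetic hypothesis of Theorem~\ref{thm2}, namely $\gcd(m,n!)=\gcd(m,6)=1$, is precisely the assumption that $m$ is odd and not divisible by $3$. Thus the divisibility condition is satisfied.

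Next I would verify that the invertibility requirements coincide. Theorem~\ref{thm2} demands that $d_1,d_2,d_3$ and the pairwise differences $d_2-d_1$, $d_3-d_1$, $d_3-d_2$ all be invertible in $\ZZ{m}$. The present hypothesis instead lists the six edge common differences $d_1,d_2,d_3,d_2-d_1,d_3-d_2,d_1-d_3$ as invertible. The only discrepancy is between $d_3-d_1$ and $d_1-d_3$; since $d_1-d_3=-(d_3-d_1)$ and $-1$ is a unit of $\ZZ{m}$, invertibility of $d_1-d_3$ is equivalent to invertibility of $d_3-d_1$. Hence the two systems of conditions are identical, and the hypotheses of Theorem~\ref{thm2} hold with $d=(d_1,d_2,d_3)$.

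Finally I would translate the admissible sizes. The conclusion of Theorem~\ref{thm2} asserts balancedness for every $s\equiv -t\bmod{m}$ with $t\in[0,n-1]=[0,2]$, that is, for $s\equiv 0$, $-1$, or $-2\bmod{m}$, which is exactly the range claimed here. Invoking Theorem~\ref{thm2} then immediately gives that $\AS(a,(d_1,d_2,d_3),s)$ is balanced in $\ZZ{m}$ for these sizes.

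There is essentially no obstacle in this proof: the substantive content has already been established in Theorem~\ref{thm2} through Lemma~\ref{keylemma} and the induction on the dimension~$n$, where balancedness of the $n$-simplex is deduced from that of two of its facets. The only points needing care are the elementary bookkeeping noted above, namely the sign identity $d_1-d_3=-(d_3-d_1)$ together with the fact that $-1$ is a unit, and the identification of the residues $\{0,-1,-2\}$ with the shift parameters $t\in\{0,1,2\}=[0,n-1]$. I would therefore present the result as a direct corollary of Theorem~\ref{thm2}, reinforcing the interpretation that the invertibility conditions found necessary in Theorem~\ref{thmarithdim2} (for $m$ odd) become sufficient, in the range $s\equiv0,-1,-2\bmod m$, as soon as $m$ is moreover coprime to~$3$.
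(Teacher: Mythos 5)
Your proposal is correct and matches the paper exactly: the paper's own proof of this statement is simply ``Theorem~\ref{thm2} for $n=3$.'' The bookkeeping you carry out (the equivalence of invertibility of $d_1-d_3$ and $d_3-d_1$, and the identification of the residues $0,-1,-2$ with $t\in[0,2]$) is the right and only content needed.
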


\begin{proof}
Theorem~\ref{thm2} for $n=3$.
\end{proof}

\begin{theorem}\label{thm5}
Let $m$ be an even number not divisible by $3$ and let $a,d_1,d_2,d_3\in\ZZ{m}$ such that $\gcd(d_1,m)=\gcd(d_3-d_2,m)=2$ and $d_2$, $d_3$, $d_2-d_1$ and $d_1-d_3$ are invertible. Then, the arithmetic tetrahedron $\AS(a,(d_1,d_2,d_3),s)$ is balanced for all $s\equiv 0$ or $-2\bmod{m}$.
\end{theorem}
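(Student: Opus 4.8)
The plan is to deduce balance from the Projection Theorem (Theorem~\ref{thmproj}) applied with $\alpha=2$, which reduces the problem to two statements about $\triangle=\AS(a,(d_1,d_2,d_3),s)$: that $\pi_2(\triangle)$ is balanced in $\ZZ2$, and that $\m_\triangle(x+2)=\m_\triangle(x)$ for every $x$. Note first that $m$ even together with $s\equiv 0$ or $-2\bmod m$ forces $s$ even. Reducing modulo $2$, the hypotheses make $d_1$ and $d_3-d_2$ even and $d_2,d_3$ odd, so $\pi_2(\triangle)=\AS(\pi_2(a),(0,1,1),s)$; as the computation in the proof of Lemma~\ref{prop12} shows, over $\ZZ2$ this triangle is balanced precisely because $s$ is even. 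Everything then rests on the $2$-periodicity of $\m_\triangle$. (The hypothesis that $3\nmid m$ is exactly what lets the six common differences fall into the prescribed invertibility pattern, so it enters only through the standing assumptions.)

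I would first prove $2$-periodicity for the sizes reachable by Proposition~\ref{propbalasn2}, namely $s\equiv 0$ or $-1\bmod m$. Apply Lemma~\ref{keylemma} with $(i,j)=(2,3)$:
$$\m_\triangle(x+d_3)-\m_\triangle(x+d_2)=\m_{\F_3(\triangle)}(x+d_3)-\m_{\F_2(\triangle)}(x+d_2).$$
By Proposition~\ref{facearith}, $\F_3(\triangle)=\AS(a,(d_1,d_2),s)$ and $\F_2(\triangle)=\AS(a,(d_1,d_3),s)$, two triangles whose leading difference $d_1$ has $\gcd(d_1,m)=2$ and whose other differences $d_2,d_2-d_1$ (resp.\ $d_3,d_3-d_1$) are invertible. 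For $s\equiv 0$ or $-1$, Proposition~\ref{propbalasn2} applies to both: each multiplicity function is $2$-periodic and, by the explicit formula, depends only on the parity of its argument relative to $a$, taking the same two values (both triangles share the element $a$, the size $s$, and $\gcd(d_1,m)=2$). Hence $\m_{\F_2(\triangle)}=\m_{\F_3(\triangle)}$; and since $d_2\equiv d_3\equiv 1\bmod 2$, the points $x+d_2$ and $x+d_3$ have equal parity, so the right-hand side vanishes. Thus $\m_\triangle(x+(d_3-d_2))=\m_\triangle(x)$, and as $\gcd(d_3-d_2,m)=2$ the element $d_3-d_2$ generates $2\ZZ m\ni 2$, giving $\m_\triangle(x+2)=\m_\triangle(x)$.

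It remains to reach $s\equiv -2\bmod m$, and I would descend from size $s+1\equiv -1$. By Proposition~\ref{facearith}, $\triangle=\triangle^{(1)}\setminus\F_3(\triangle^{(1)})$ with $\triangle^{(1)}=\AS(a-d_3,(d_1,d_2,d_3),s+1)$, so $\m_\triangle=\m_{\triangle^{(1)}}-\m_{\F_3(\triangle^{(1)})}$. The tetrahedron $\triangle^{(1)}$ has size $\equiv -1$ and the same common differences, hence is $2$-periodic by the previous step, while $\F_3(\triangle^{(1)})=\AS(a-d_3,(d_1,d_2),s+1)$ is $2$-periodic directly by Proposition~\ref{propbalasn2}. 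A difference of $2$-periodic functions is $2$-periodic, so $\m_\triangle(x+2)=\m_\triangle(x)$ here too. In both cases $s\equiv 0$ and $s\equiv -2$, combining $2$-periodicity with the balance of $\pi_2(\triangle)$ and invoking Theorem~\ref{thmproj} shows that $\triangle$ is balanced.

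The main obstacle is that Proposition~\ref{propbalasn2}, the only tool giving the decisive parity-only form of a triangle's multiplicity function, covers only triangle sizes $\equiv 0$ or $-1\bmod m$, whereas one target size is $s\equiv -2$. Realizing a size-$(-2)$ tetrahedron as a size-$(-1)$ tetrahedron minus one facet replaces the forbidden size by two admissible computations and is the crux of the argument; the one calculation to carry out with care is the coincidence $\m_{\F_2(\triangle)}=\m_{\F_3(\triangle)}$ that makes the right-hand side of the key lemma cancel.
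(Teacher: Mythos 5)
Your proof is correct, but it takes a genuinely different route from the paper's. The paper, in the case $s\equiv 0\bmod m$, applies Lemma~\ref{keylemma} to the facet pair $\F_1(\triangle)=\AS(a,(d_2,d_3),s)$ and $\F_2(\triangle)=\AS(a,(d_1,d_3),s)$ (each containing exactly one non-invertible difference), derives the offset identity $\m_{\F_2(\triangle)}(x+1)=\m_{\F_1(\triangle)}(x)$ from Proposition~\ref{propbalasn2}, and concludes that $\m_{\triangle}$ is invariant under translation by the \emph{invertible} element $d_1-d_2$ --- hence constant, so balance follows directly with no appeal to the Projection Theorem; for $s\equiv -2$ it strips the first \emph{two rows} off a tetrahedron of size $s+2\equiv 0$ and checks by explicit computation that the union of those two rows is balanced. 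You instead pair $\F_2(\triangle)$ and $\F_3(\triangle)$, which both contain the non-invertible $d_1$, obtain the cleaner coincidence $\m_{\F_2(\triangle)}=\m_{\F_3(\triangle)}$, and get only translation-invariance by $d_3-d_2$ (hence $2$-periodicity), which you then must combine with the balance of $\pi_2(\triangle)$ via Theorem~\ref{thmproj}. What your approach buys is a uniform mechanism ($2$-periodicity plus projection) and a slicker descent for $s\equiv -2$: you remove a single facet from a tetrahedron of size $s+1\equiv -1$, and the argument survives even though that intermediate tetrahedron is \emph{not} balanced (as the paper's subsequent remark notes), because you only need its $2$-periodicity, which Lemma~\ref{keylemma} and Proposition~\ref{propbalasn2} do supply at size $\equiv -1$. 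What the paper's approach buys is that in the case $s\equiv 0$ it proves constancy of the multiplicity function outright, which is a stronger intermediate statement. Both arguments rest on the same two computational pillars, Lemma~\ref{keylemma} and Proposition~\ref{propbalasn2}, so the divergence is in how the facets are paired and how the final step is closed.
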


\begin{proof}
Let $\triangle=\AS(a,(d_1,d_2,d_3),s)$.
\setcounter{case}{0}
\begin{case}
Suppose that $s$ is divisible by $m$.

Since $\gcd(d_1,m)=2$, $d_3$ and $d_1-d_3$ invertible and $s$ divisible by $m$, it follows from Proposition~\ref{propbalasn2} that the multiplicity function of $\F_2(\triangle)=\AS(a,(d_1,d_3),s)$ is entirely determined by
$$
\m_{\F_2(\triangle)}(a+2i) = \frac{1}{m}\binom{s+1}{2} + \frac{s}{2m},\quad \m_{\F_2(\triangle)}(a+2i+1) = \frac{1}{m}\binom{s+1}{2} - \frac{s}{2m},
$$
for all $i\in[0,\frac{m}{2}-1]$. Similarly, since $\gcd(d_3-d_2,m)=2$, $d_2$ and $d_3$ invertible and $s$ divisible by $m$, we have by Proposition~\ref{propbalasn2} that the multiplicity function of $\F_1(\triangle)=\AS(a,(d_2,d_3),s)=\AS(a+(s-1)d_2,d_3-d_2,-d_2,s)$ is equal to
$$
\m_{\F_1(\triangle)}(a+2i+1) = \frac{1}{m}\binom{s+1}{2} + \frac{s}{2m},\quad \m_{\F_1(\triangle)}(a+2i) = \frac{1}{m}\binom{s+1}{2} - \frac{s}{2m},
$$
for all $i\in[0,\frac{m}{2}-1]$. This leads to the identity
$$
\m_{\F_2(\triangle)}(x+1) = \m_{\F_1(\triangle)}(x),
$$
for all $x\in\ZZ{m}$. Then, since $\gcd(d_1,m)=2$ and $d_2$ is invertible, we obtain by Lemma~\ref{keylemma} that
$$
\m_{\triangle}(x+d_1) - \m_{\triangle}(x+d_2)
\begin{array}[t]{l}
= \m_{\F_1(\triangle)}(x+d_1) - \m_{\F_2(\triangle)}(x+d_2) \\
= \m_{\F_1(\triangle)}(x+2) - \m_{\F_2(\triangle)}(x+1) = 0, \\
\end{array}
$$
and thus
$$
\m_{\triangle}(x+(d_1-d_2))=\m_{\triangle}(x)
$$
for all $x\in\ZZ{m}$. Since $d_1-d_2$ is invertible, we conclude that $\m_{\triangle}$ is constant and $\triangle$ is balanced in $\ZZ{m}$.
\end{case}
\begin{case}
Now, suppose that $s\equiv -2\bmod{m}$.

The tetrahedron $\triangle$ can be seen as the arithmetic tetrahedron $\triangle'$ of size $s+2$ where the first two rows have been removed, i.e., $\triangle=\triangle'\setminus\{\R_0,\R_1\}$, where $\triangle' := \AS(a-2d_3,(d_1,d_2,d_3),s+2)$, $\R_0 := \AS(a-2d_3,(d_1,d_2),s+2)$ and $\R_1:= \AS(a-d_3,(d_1,d_2),s+1)$. Since $\gcd(d_1,m)=2$, $d_2$, $d_2-d_1$, $d_3$ invertible and $s+2\equiv 0$, $s+1\equiv -1\bmod{m}$, it follows from Proposition~\ref{propbalasn2} again that
$$
\begin{array}{l}
\displaystyle\m_{\R_0}(a+2i) = \frac{1}{m}\binom{s+3}{2} + \frac{s+2}{2m},\quad \m_{\R_0}(a+2i+1) = \frac{1}{m}\binom{s+3}{2} - \frac{s+2}{2m}, \\
\ \\
\displaystyle\m_{\R_1}(a+2i+1) = \frac{1}{m}\binom{s+2}{2} + \frac{s+2}{2m},\quad \m_{\R_1}(a+2i) = \frac{1}{m}\binom{s+2}{2} - \frac{s+2}{2m},
\end{array}
$$
for all $i\in[0,\frac{m}{2}-1]$. This leads to
$$
\m_{\R_0}(x)+\m_{\R_1} (x) = \frac{1}{m}\left(\binom{s+3}{2}+\binom{s+2}{2}\right),
$$
for all $x\in\ZZ{m}$. Therefore the multiset $\R_0\cup \R_1$ is balanced in $\ZZ{m}$. Moreover, since $s+2\equiv 0\bmod{m}$, we already know from Case~1 that $\triangle'$ is balanced. The multiset difference of balanced multisets is obviously balanced. This completes the proof.
\end{case}
\vspace{-15pt}
\end{proof}

\begin{remark}
When $m$ is even not divisible by $3$, if we suppose that $\gcd(d_1,m)=\gcd(d_3-d_2,m)=2$ and $d_2$, $d_3$, $d_2-d_1$ and $d_1-d_3$ are invertible, then the arithmetic tetrahedron $\AS(a,(d_1,d_2,d_3),s)$ is not balanced for all $s\equiv -1\bmod{m}$. Indeed, it can be seen as the multiset difference of the arithmetic tetrahedron $\AS(a-d_3,(d_1,d_2,d_3),s+1)$, which is balanced by Theorem~\ref{thm5}, and the arithmetic triangle $\AS(a-d_3,(d_1,d_2),s+1)$, which is not balanced by Theorem~\ref{balasn2}.
\end{remark}

\section{Balanced simplices generated from arithmetic arrays}

We are now ready to show that the orbits generated from arithmetic arrays by additive cellular automata are a source of balanced simplices.

\subsection{The general case: in dimension $\mathbf{n\ge2}$}

In this subsection, we prove Theorem~\ref{thm1}, the main result of this paper, and, in corollary, the special case of the Pascal cellular automata is examined.

\subsubsection{For any ACA}

The proof of Theorem~\ref{thm1} is based on the following lemma.

\begin{lemma}\label{lem1}
Let $n\ge2$, $m_1$ and $m_2$ be three positive integers such that $\gcd(m_2,n!)=1$ and let $m:=m_1m_2$. Let $a\in\ZZ{m}$ and let $d=(d_1,\ldots,d_n)\in(\ZZ{m})^{n}$ such that $\pi_{m_2}(d_i)$, for all $i\in[1,n]$, and $\pi_{m_2}(d_j)-\pi_{m_2}(d_i)$, for all distinct integers $i,j\in[1,n]$, are invertible in $\ZZ{m_2}$. Then, for all positive integers $s\equiv -t\bmod{m_2}$, where $t\in[0,n-1]$, the multiplicity function of the arithmetic simplex $\triangle=\AS(a,m_1d,s)$ of $\ZZ{m}$ verifies
$$
\m_{\triangle}(x+m_1) = \m_{\triangle}(x),
$$
for all $x\in\ZZ{m}$.
\end{lemma}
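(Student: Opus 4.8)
The plan is to reduce the claim to Theorem~\ref{thm2} by exploiting the coset structure forced by the common difference $m_1 d$. First I would observe that every element of $\triangle=\AS(a,m_1 d,s)$ has the shape $a+m_1(i_1 d_1+\cdots+i_n d_n)$ with $i\in\N^n$ and $i_1+\cdots+i_n\le s-1$; hence the support of $\triangle$ lies entirely in the coset $C:=a+m_1\ZZ{m}$. Since $m=m_1 m_2$, the subgroup $m_1\ZZ{m}$ has exactly $m_2$ elements, so $C$ consists of the $m_2$ residues $a,a+m_1,\ldots,a+(m_2-1)m_1$ and is in natural bijection with $\ZZ{m_2}$ via $a+m_1 c\mapsto \pi_{m_2}(c)$.

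Next I would compute the multiplicity function on $C$ directly. Writing $\bar c:=\pi_{m_2}(c)$ and $\bar d:=(\pi_{m_2}(d_1),\ldots,\pi_{m_2}(d_n))$, the equality $a+m_1(i_1 d_1+\cdots+i_n d_n)=a+m_1 c$ in $\ZZ{m}$ is equivalent to $m_1(i_1 d_1+\cdots+i_n d_n-c)\equiv 0\bmod{m_1 m_2}$, that is, to $i_1 d_1+\cdots+i_n d_n\equiv c\bmod{m_2}$. Therefore
$$
\m_{\triangle}(a+m_1 c)=\#\left\{\,i\in\N^n:\ i_1+\cdots+i_n\le s-1,\ i_1 d_1+\cdots+i_n d_n\equiv c\bmod{m_2}\,\right\}=\m_{\AS(0,\bar d,s)}(\bar c),
$$
since the count on the middle is precisely the number of index tuples producing the value $\bar c$ in the arithmetic simplex $\AS(0,\bar d,s)$ over $\ZZ{m_2}$. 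In other words, under the coset-to-$\ZZ{m_2}$ correspondence the multiset $\triangle$ is carried onto $\AS(0,\bar d,s)$.

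Now I would invoke Theorem~\ref{thm2} with modulus $m_2$, dimension $n$, first element $0$ and common difference $\bar d$. Its hypotheses are exactly what is assumed here: $\gcd(m_2,n!)=1$; each $\pi_{m_2}(d_i)$ is invertible; each $\pi_{m_2}(d_j)-\pi_{m_2}(d_i)=\pi_{m_2}(d_j-d_i)$ is invertible; and $s\equiv -t\bmod{m_2}$ with $t\in[0,n-1]$. Hence $\AS(0,\bar d,s)$ is balanced in $\ZZ{m_2}$, so $\m_{\AS(0,\bar d,s)}$ is the constant $\frac{1}{m_2}\binom{s+n-1}{n}$. By the displayed identity, $\m_{\triangle}(a+m_1 c)$ is then independent of $c$, i.e.\ $\m_{\triangle}$ is constant on $C$.

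Finally I would read off the stated identity. For $x=a+m_1 c\in C$ we have $x+m_1=a+m_1(c+1)\in C$, so $\m_{\triangle}(x+m_1)=\m_{\triangle}(x)$ by constancy on $C$; for $x\notin C$, the element $x+m_1$ is also outside $C$, so both multiplicities are $0$. In either case $\m_{\triangle}(x+m_1)=\m_{\triangle}(x)$ for all $x\in\ZZ{m}$. The only substantive step is the identification of $\triangle$ with the arithmetic simplex $\AS(0,\bar d,s)$ over $\ZZ{m_2}$; once this is in place the balancedness is immediate from Theorem~\ref{thm2}, and no delicate estimates are required.
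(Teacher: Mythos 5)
Your proposal is correct and follows essentially the same route as the paper: both identify $\triangle$ with the image of the arithmetic simplex $\AS(0,\pi_{m_2}(d),s)$ over $\ZZ{m_2}$ under the coset map $c\mapsto a+m_1c$, invoke Theorem~\ref{thm2} to get balancedness in $\ZZ{m_2}$, and transfer the resulting constancy of the multiplicity function back to the coset $a+m_1\ZZ{m}$ (with the trivial case of elements outside the coset handled separately). Your explicit verification that the correspondence is well defined and multiplicity-preserving is a slightly more careful write-up of the same argument.
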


\begin{proof}
First, it is clear that $\triangle$ is uniquely composed of elements of the form $a+km_1$, where $k\in[0,m_2-1]$. Therefore, the identity is obviously true for all elements $x$ not in $\left\{ a+km_1\ \middle|\ k\in[0,m_2-1]\right\}$. Moreover, since $\pi_{m_2}(d_i)$, for all $i\in[1,n]$, and $\pi_{m_2}(d_j)-\pi_{m_2}(d_i)$, for all distinct integers $i,j\in[1,n]$, are invertible in $\ZZ{m_2}$ and since $s\equiv -t\bmod{m_2}$, where $t\in[0,n-1]$, we know from Theorem~\ref{thm2} that the arithmetic simplex $\AS(0,\pi_2(d),s)$ is balanced in $\ZZ{m_2}$. Finally, since $\triangle$ can be seen as the image of the balanced arithmetic simplex $\triangle'=\AS(0,\pi_2(d),s)$ by the function
$$
f : 
\begin{array}[t]{ccc}
 \ZZ{m_2} & \longrightarrow & \ZZ{m} \\
 \pi_{m_2}(x) & \longmapsto & a+m_1x
\end{array}
$$
it follows that
$$
\m_{\triangle}(a+km_1) = \m_{f(\triangle')}(f(k)) = \m_{\triangle'}(k) = \m_{\triangle'}(0) = \m_{f(\triangle')}(f(0)) = \m_{\triangle}(a),
$$
for all $k\in[0,m_2-1]$. This completes the proof.
\end{proof}

We are now ready to prove Theorem~\ref{thm1}.

\begin{proof}[Proof of Theorem~\ref{thm1}]
Let $\triangle(j,\varepsilon,s)$ be an $n$-simplex of size $s=\lambda\lcm(\ord_m(\sigma),m)-t$, where $t\in[0,n-1]$, appearing in the orbit $\orb(\ArA(a,d))=(a_i)_{i\in\Z^{n-1}\times\N}$. We proceed by induction on $m$. For $m=1$, the result is obvious. Suppose now that the result is true for all finite cyclic groups of order strictly less than $m$. Let
$$
m_1 := \gcd\left(\ord_m(\sigma),m\right)\quad\text{and}\quad m_2:=\frac{m}{m_1}.
$$
Then, $s=\lambda\ord_m(\sigma)m_2-t$.

First, we prove that $\m_{\triangle}(x+m_1)=\m_{\triangle}(x)$ for all $x\in\ZZ{m}$. By Proposition~\ref{propsub} for $\alpha=\ord_m(\sigma)$, we know that $\triangle(j,\varepsilon,s)$ can be decomposed into ${\ord_m(\sigma)}^{n}$ subsimplices $\SuS_k$,
$$
\triangle(j,\varepsilon,s) = \bigcup_{k\in{[0,\ord_m(\sigma)-1]}^{n}}{\SuS_k}
$$
that are, for all $k\in{[0,\ord_m(\sigma)-1]}^{n}$, the arithmetic simplices
$$
\SuS_k = \AS\left(a_{j+\varepsilon\cdot k},\ord_m(\sigma)\sigma^{j_n+\varepsilon_n k_n}\varepsilon\cdot \tilde{d},\lambda m_2-\left\lfloor\frac{\sum_{u=1}^{n}k_u+t}{\ord_m(\sigma)}\right\rfloor\right),
$$
where $\tilde{d}=(d_1,\ldots,d_{n-1},d_n)$ and $d_n=\sigma^{-1}\sum_{u=1}^{n-1}\sigma_ud_u$. Since
$$
\gcd\left( \frac{\ord_m(\sigma)}{m_1} , m_2 \right) = \frac{\gcd(\ord_m(\sigma),m)}{\gcd(\ord_m(\sigma),m)} = 1,
$$
it follows, from the hypothesis of the theorem, that the elements
$$
\pi_{m_2}\left(\frac{\ord_m(\sigma)}{m_1}\sigma^{j_n+\varepsilon_nk_n}\varepsilon_u d_u\right),
$$
for all $u\in[1,n]$, and
$$
\pi_{m_2}\left(\frac{\ord_m(\sigma)}{m_1}\sigma^{j_n+\varepsilon_nk_n}(\varepsilon_u d_u-\varepsilon_v d_v)\right),
$$
for all distinct integers $u,v\in[1,n]$, are invertible in $\ZZ{m_2}$. Moreover, since the size of $\SuS_k$, $\lambda m_2-\left\lfloor\frac{\sum_{u=1}^{n}k_u+t}{\ord_m(\sigma)}\right\rfloor$, is in $[\lambda m_2-(n-1),\lambda m_2]$ and thus is congruent to a certain integer $-y$ modulo $m_2$ with $y\in[0,n-1]$, we deduce from Lemma~\ref{lem1} that
$$
\m_{\SuS_k}(x+m_1) = \m_{\SuS_k}(x),
$$
for all $x\in\ZZ{m}$, and this, for all $k\in{[0,\ord_m(\sigma)-1]}^{n}$. Then, the multiplicity function of $\triangle$ verifies
$$
\m_{\triangle}(x+m_1) = \hspace{-10pt}\sum_{k\in{[0,\ord_m(\sigma)-1]}^{n}}\hspace{-10pt}\m_{\SuS_k}(x+m_1) = \hspace{-10pt}\sum_{k\in{[0,\ord_m(\sigma)-1]}^{n}}\hspace{-10pt}\m_{\SuS_k}(x) = \m_{\triangle}(x),
$$
for all $x\in\ZZ{m}$.

Now, we prove that the projected simplex $\pi_{m_1}(\triangle)$ is balanced. We begin by showing that the integer $\ord_m(\sigma^m)m$ is divisible by $\ord_{m_1}(\sigma^{m_1})m_1$. Indeed,
$$
\left(\sigma^{m_1}\right)^{m_2\ord_m(\sigma^m)} = \left(\sigma^{m}\right)^{\ord_m(\sigma^m)} \equiv 1 \pmod{m}
$$
and, by divisibility of $m$ by $m_1$, this equivalence is also true modulo $m_1$. It follows that $m_2\ord_{m}(\sigma^{m})$ is divisible by $\ord_{m_1}(\sigma^{m_1})$, implying that $\ord_m(\sigma^m)m$ is divisible by $\ord_{m_1}(\sigma^{m_1})m_1$. Then, $s\equiv -t\bmod{\ord_{m_1}(\sigma^{m_1})m_1}$ and since the elements $\pi_{m_1}(d_u)$, for all $u\in[1,n]$, and $\pi_{m_1}(\varepsilon_vd_v-\varepsilon_ud_u)$, for all distinct integers $u,v\in[1,n]$, are clearly invertible in $\ZZ{m_1}$, we deduce from the induction hypothesis that $\pi_{m_1}(\triangle)$ is balanced in $\ZZ{m_1}$.

Finally, since $\pi_{m_1}(\triangle)$ is balanced and $\m_{\triangle}(x+m_1)=\m_{\triangle}(x)$ for all $x\in\ZZ{m}$, we deduce from Theorem~\ref{thmproj} that the simplex $\triangle$ is balanced. This concludes the proof.
\end{proof}

\begin{remark}
If $\sigma\equiv 1\bmod{m}$, then the $n$-simplex $\triangle\left(j,\varepsilon,s\right)$ appearing in $\orb(\ArA(a,d))$ is an arithmetic simplex and Theorem~\ref{thm1} is simply Theorem~\ref{thm2} on balanced arithmetic simplices.
\end{remark}

\subsubsection{For the Pascal cellular automata}

Here, we investigate the consequences of Theorem~\ref{thm1} on the existence of balanced $n$-simplices, in the case where the ACA considered is $\PCA{n-1}$.

\begin{corollary}
Let $n\ge 2$ be a positive integers. For every positive integer $m$ such that $\gcd(m,(3(n-1))!)=1$, there exist infinitely many balanced $n$-simplices of $\ZZ{m}$ generated by $\PCA{n-1}$, for all possible orientations $\varepsilon\in\{-1,1\}^ n$. In the special case of the two orientations $\varepsilon=+\cdots+-$ or $\varepsilon=-\cdots-+$, the existence of an infinite number of such balanced simplices is verified for every $\ZZ{m}$ such that $\gcd(m,n!)=1$, if $n$ is even, and for every $\ZZ{m}$ such that $\gcd\left(m,\left(\frac{3n+1}{2}\right)!\right)=1$, if $n$ is odd.
\end{corollary}

\begin{proof}
For the Pascal automaton of dimension $n-1$, we have $\sigma=n$ and $\sigma_k=-1$ for all $k\in[1,n-1]$. Let $m$ be a positive integer such that $\gcd(m,n!)=1$ and let $A=\ArA(a,d)$ be the arithmetic array of $\ZZ{m}$ with common difference $d=(d_1,\ldots,d_{n-1})$ defined by $d_k:=k\in\ZZ{m}$ for all $k\in[1,n-1]$. Then,
$$
d_n := \sigma^{-1}\sum_{k=1}^{n-1}\sigma_kd_k = -n^{-1}\sum_{k=1}^{n-1}k = -2^{-1}(n-1).
$$
Since $\gcd(m,n!)=1$, the common differences $d_1,d_2,\ldots,d_{n-1}$ and $d_n$ are invertible in $\ZZ{m}$. For all distinct integers $u,v\in[1,n-1]$, we have
\begin{equation}\label{eq41}
3 \le u + v\le 2n-3 \quad\text{and}\quad 1\le |v-u| \le n-2.
\end{equation}
For all integers $u\in[1,n-1]$, we have
\begin{equation}\label{eq42}
n+1 \le 2u+(n-1)\le 3(n-1) \quad \text{and}\quad  -(n-3)\le 2u-(n-1) \le n-1.
\end{equation}
If $n$ is even, then $2u-(n-1)$ cannot vanish and we deduce that if $\gcd(m,(3(n-1))!)=1$ then, for any orientation $\varepsilon\in\{-1,1\}^n$, all the elements in
\begin{equation}\label{eq43}
\left\{\varepsilon_v d_v -\varepsilon_u d_u\ \middle|\ 1\le u<v\le n \right\}
\end{equation}
are invertible modulo $m$. Therefore, by Theorem~\ref{thm1}, all simplices of orientation $\varepsilon$ and of size $s\equiv -t\bmod{\lcm(\ord_m(n),m)}$, with $t\in[0,n-1]$, appearing in the orbit $\orb(A)$ are balanced in $\ZZ{m}$. For the specific orientations $\varepsilon=+\cdots+-$ and $\varepsilon=-\cdots-+$, we deduce from the second inequalities of (\ref{eq41}) and (\ref{eq42}) that this result is also true in the more general case where $\gcd(m,n!)=1$. Now, suppose that $n$ is odd and let $\varepsilon\in\{-1,1\}^{n}$ be an orientation. If there exists $l\in[1,n-1]$ such that $\varepsilon_l=\varepsilon_n$ , then we consider the arithmetic array $A=\ArA(a,d)$ of $\ZZ{m}$ with common differences $d=(d_1,\ldots,d_{n-1})$ defined by $d_l:=\frac{n-1}{2}$, $d_\frac{n-1}{2}:=l$ and $d_k:=k$ for all $k\in[1,n-1]\setminus\left\{l,\frac{n-1}{2}\right\}$. Then, $d_n=-2^{-1}(n-1)$ as before and
$$
\varepsilon_ld_l - \varepsilon_n d_n = \varepsilon_l(n-1).
$$
If $\gcd(m,(3(n-1))!)=1$, we deduce that all the common differences $d_k$ for $k\in[1,n]$ and, from inequalities (\ref{eq41}) and (\ref{eq42}), all the elements in the set of (\ref{eq43}) are invertible modulo $m$. Therefore, by Theorem~\ref{thm1}, all simplices of this orientation $\varepsilon$, where $\varepsilon_l=\varepsilon_n$, and of size $s\equiv -t\bmod{\lcm(\ord_m(n),m)}$, with $t\in[0,n-1]$, appearing in the orbit $\orb(A)$ are balanced in $\ZZ{m}$. Finally, suppose that $n$ is odd and $\varepsilon$ is such that $\varepsilon_k=-\varepsilon_n$ for all $k\in[1,n-1]$, i.e., $\varepsilon=+\cdots+-$ or $\varepsilon=-\cdots-+$. In this case, we consider the arithmetic array $A=\ArA(a,d)$ of $\ZZ{m}$ with common differences $d=(d_1,\ldots,d_{n-1})$ defined by $d_{\frac{n+1}{2}}:=\frac{3n+1}{2}$ and $d_k:=k$ for all $k\in[1,n-1]\setminus\left\{\frac{n+1}{2}\right\}$. Then, $d_n=-2^{-1}(n+1)$. For all distinct integers $u,v\in\left[1,\frac{3n+1}{2}\right]$, we have
\begin{equation}\label{eq44}
1\le |v-u|\le \frac{3n-1}{2} \quad\text{and}\quad -\frac{n-1}{2}\le u-\frac{n+1}{2} \le n.
\end{equation}
If $\ZZ{m}$ is such that $\gcd\left(m,\left(\frac{3n+1}{2}\right)!\right)=1$, we know by definition that all the common differences $d_k$ are invertible modulo $m$ for all $k\in[1,n]$. Moreover, since $d_u+d_n$ cannot vanish by definition, and from (\ref{eq44}), we deduce that all the elements in the set of (\ref{eq43}) are invertible modulo $m$. By Theorem~\ref{thm1} again, all simplices with these orientations, $\varepsilon=+\cdots+-$ or $\varepsilon=-\cdots-+$, and of size $s\equiv -t\bmod{\lcm(\ord_m(n),m)}$, with $t\in[0,n-1]$, appearing in the orbit $\orb(A)$ are balanced in $\ZZ{m}$. This concludes the proof.
\end{proof}

\subsection{In dimension 3}

In this subsection, we show that, in dimension 3, a similar result than Theorem~\ref{thm1} can be obtained for certain even values of $m$ by using Theorem~\ref{thm5}. As a corollary, the special case of the Pascal cellular automaton $\PCA{2}$ is studied.

\subsubsection{For any ACA}

\begin{theorem}\label{thm6}
Let $m$ be an even number not divisible by $3$ such that $\sigma\in\ZZ{m}$ is invertible and $\sigma\equiv 1 \bmod{2^{v_2(m)}}$, where $v_2(m)$ is the highest exponent $u$ such that $2^u$ divides $m$. Let $a\in\ZZ{m}$, $d=(d_1,d_2)\in\left(\ZZ{m}\right)^{2}$ and $\varepsilon\in\{-1,1\}^{3}$ such that $\varepsilon_2d_2$, $\varepsilon_3d_3$, $\varepsilon_2d_2-\varepsilon_1d_1$, $\varepsilon_1d_1-\varepsilon_3d_3$ are invertible in $\ZZ{m}$ and $\gcd(\varepsilon_1d_1,m) = \gcd(\varepsilon_3d_3-\varepsilon_2d_2,m) = 2$, where $d_3:=\sigma^{-1}(\sigma_1d_1+\sigma_2d_2)$. Then, in the orbit $\orb(\ArA(a,d))$, every tetrahedron with orientation $\varepsilon$ and of size $s$ is balanced, for all $s\equiv 0$ or $-2\bmod{\lcm(\ord_m(\sigma),m)}$.
\end{theorem}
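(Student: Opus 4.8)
The plan is to mirror the induction on $m$ used to prove Theorem~\ref{thm1bis}, peeling off one factor of $m$ by the Projection Theorem (Theorem~\ref{thmproj}), but with two modifications forced by the weaker hypotheses: the factor I strip off must absorb the entire $2$-part of $m$, and the terminal case is supplied by Theorem~\ref{thm5} rather than by Theorem~\ref{thm2}. Writing $q:=2^{v_2(m)}$, set
$$
m_1 := \lcm\bigl(q,\gcd(\ord_m(\sigma),m)\bigr), \qquad m_2 := \frac{m}{m_1}.
$$
The whole point of inserting $q$ into $m_1$ is that $m_2$ is then \emph{odd}; this is what rescues the argument, since modulo an odd number the two degenerate common differences $\varepsilon_1d_1$ and $\varepsilon_3d_3-\varepsilon_2d_2$ (which have $\gcd$ equal to $2$ with $m$) automatically become invertible.

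First I would dispose of the terminal case $m_2=1$. One checks that $\lcm(q,\gcd(\ord_m(\sigma),m))=m$ forces the odd part of $m$ to divide $\ord_m(\sigma)$, which is impossible unless $m$ is a power of $2$. In that case $\sigma\equiv 1\bmod 2^{v_2(m)}=m$, so by Proposition~\ref{prop4} the geometric factor $\sigma^{i_n}$ is trivial and $\triangle(j,\varepsilon,s)$ is literally an arithmetic tetrahedron, with common differences $\varepsilon_1d_1,\varepsilon_2d_2,\varepsilon_3d_3$ obeying exactly the $\gcd$-$2$ hypotheses of Theorem~\ref{thm5}; since $s\equiv 0$ or $-2\bmod m$, Theorem~\ref{thm5} gives that it is balanced. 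For the inductive step ($m_2>1$, so $m_1<m$) I would verify the two conditions of the Projection Theorem for $\alpha=m_1$.

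For the periodicity $\m_\triangle(x+m_1)=\m_\triangle(x)$, decompose $\triangle$ by Proposition~\ref{propsub} with $\alpha_{\mathrm{dec}}:=\lcm(\ord_m(\sigma),q)$, which is a multiple of $\ord_m(\sigma)$ and is divisible by $m_1$. Each subsimplex $\SuS_k$ is then an arithmetic tetrahedron whose common differences are multiples of $m_1$; dividing by $m_1$ and reducing modulo the \emph{odd} number $m_2$, the gcd computation of Theorem~\ref{thm1bis} gives $\gcd(\alpha_{\mathrm{dec}}/m_1,m_2)=1$ (one shows $\alpha_{\mathrm{dec}}/m_1$ divides $\ord_m(\sigma)/\gcd(\ord_m(\sigma),m)$, which is coprime to $m_2$). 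Hence all six reduced common differences are invertible in $\ZZ{m_2}$, and since $\gcd(m_2,3!)=1$ Theorem~\ref{thm2} — and thereby Lemma~\ref{lem1} — applies to $\SuS_k$ for \emph{every} admissible size, crucially including the sizes $\equiv-1\bmod m_2$, yielding $\m_{\SuS_k}(x+m_1)=\m_{\SuS_k}(x)$; summing over $k$ gives the claim. For the balancedness of $\pi_{m_1}(\triangle)$ I would observe that $m_1$ is even, not divisible by $3$, satisfies $2^{v_2(m_1)}=q$ with $\sigma\equiv1\bmod q$, and that the reductions of $d$ and $\varepsilon$ retain the $\gcd$-$2$ and invertibility hypotheses, while the size congruence survives because $\lcm(\ord_{m_1}(\sigma),m_1)$ divides $\lcm(\ord_m(\sigma),m)$; the induction hypothesis then shows $\pi_{m_1}(\triangle)$ is balanced, and Theorem~\ref{thmproj} concludes.

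The main obstacle is exactly the choice of the factor $m_1$. With the naive choice $m_1=\gcd(\ord_m(\sigma),m)$ of Theorem~\ref{thm1bis}, the reduction modulus $m_2$ may be even; then the subsimplices of size $\equiv-1\bmod m_2$ are genuinely unbalanced (Theorem~\ref{thm5} fails for that residue, as recorded in the Remark following it), so the per-subsimplex periodicity collapses. Forcing $q\mid m_1$ pushes the entire $2$-part — where both the $\gcd$-$2$ degeneracy and the $s\equiv-1$ defect reside — into $m_1$, to be resolved by the induction and ultimately by Theorem~\ref{thm5} at a power-of-$2$ modulus, leaving an odd quotient $m_2$ on which Theorem~\ref{thm2} holds for all sizes. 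The technical heart is therefore the verification that this enlarged $m_1$ still divides $\alpha_{\mathrm{dec}}$, that $\gcd(\alpha_{\mathrm{dec}}/m_1,m_2)=1$, and that all congruence conditions remain compatible under the reduction.
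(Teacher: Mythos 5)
Your proposal is correct and follows essentially the same route as the paper's proof: the same induction on $m$ via Theorem~\ref{thmproj}, the same splitting factor (your $m_1=\lcm(2^{v_2(m)},\gcd(\ord_m(\sigma),m))$ coincides with the paper's $2^{v_2(m)}\gcd(\ord_m(\sigma),m')$, and your $\alpha_{\mathrm{dec}}=\lcm(\ord_m(\sigma),2^{v_2(m)})$ with the paper's $\frac{m_1}{\gcd(\ord_m(\sigma),m)}\ord_m(\sigma)$), the same use of Proposition~\ref{propsub} and Lemma~\ref{lem1} over the odd quotient $m_2$, and the same terminal case handled by Theorem~\ref{thm5} when $m$ is a power of $2$. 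Your explanation of why the entire $2$-part must be pushed into $m_1$ (so that the subsimplices of size $\equiv-1\bmod{m_2}$ remain covered by Lemma~\ref{lem1}) is exactly the point of the paper's construction.
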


\begin{proof}
Let $\triangle=\triangle(j,\varepsilon,s)$ be a tetrahedron of size $\lambda\lcm(\ord_m(\sigma),m)-t$, where $t\in\{0,2\}$, appearing in the orbit $\mathcal{O}(\ArA(a,d))=(a_i)_{i\in\Z^{2}\times\N}$. We proceed by induction on $m$.

For $m=2^{v_2(m)}$, since $\sigma\equiv 1\bmod{m}$, it follows that $\triangle$ is an arithmetic tetrahedron of size $s\equiv 0$ or $-2\bmod{m}$ and of common differences $(\varepsilon_1d_1,\varepsilon_2d_2,\varepsilon_3d_3)$ such that $\varepsilon_2d_2$, $\varepsilon_2d_2-\varepsilon_1d_1$, $\varepsilon_3d_3$ and $\varepsilon_1d_1-\varepsilon_3d_3$ are invertible and $\gcd(\varepsilon_1d_1,m)=\gcd(\varepsilon_3d_3- \varepsilon_2d_2)=2$. By Theorem~\ref{thm5}, the tetrahedron $\triangle$ is balanced in this case.

Suppose now that $m>2^{v_2(m)}$ and let $m':=m/2^{v_2(m)}$, the odd part of $m$. Let
$$
m_1:=2^{v_2(m)}\gcd(\ord_m(\sigma),m')\quad\text{and}\quad m_2:=\frac{m}{m_1}=\frac{m'}{\gcd(\ord_m(\sigma),m')}.
$$
First, we prove that $\m_{\triangle}(x+m_1)=\m_{\triangle}(x)$, for all $x\in\ZZ{m}$. Since
$$
m_1 = 2^{v_2(m)}\gcd(\ord_m(\sigma),m') = \gcd(2^{v_2(m)}\ord_m(\sigma),m),
$$
it follows that $m_1$ is divisible by $\gcd(\ord_m(\sigma),m)$. By Proposition~\ref{propsub} with
$$
\alpha=\frac{m_1}{\gcd(\ord_m(\sigma),m)}\ord_m(\sigma),
$$
since $s=\alpha m_2-t$, we know that $\triangle$ can be decomposed into $\alpha^{3}$ subtetrahedra that are arithmetic. More precisely, we have
$$
\triangle = \bigcup_{k\in[0,\alpha-1]^{3}}\SuS_k,
$$
where
$$
\SuS_k = \AS\left(a_{j+\varepsilon\cdot k},m_1\frac{\ord_m(\sigma)}{\gcd(\ord_m(\sigma),m)}\sigma^{j_3+\varepsilon_3 k_3}(\varepsilon_1d_1,\varepsilon_2d_2,\varepsilon_3d_3) , m_2-\left\lfloor\ \frac{k_1+k_2+k_3}{\alpha} \right\rfloor\right),
$$
for all $k\in[0,\alpha-1]^{3}$. Since $m_2$ is an odd factor of $m$ and $\gcd(\varepsilon_2d_2,m)=\gcd(\varepsilon_3d_3-\varepsilon_2d_2,m)=2$, we deduce that $\pi_{m_2}(\varepsilon_2d_2)$ and $\pi_{m_2}(\varepsilon_3d_3-\varepsilon_2d_2)$ are invertible in $\ZZ{m_2}$. Since $\sigma$, $\varepsilon_2d_2$, $\varepsilon_3d_3$, $\varepsilon_2d_2-\varepsilon_1d_1$ and $\varepsilon_1d_1-\varepsilon_3d_3$ are invertible in $\ZZ{m}$ and since
$$
\gcd\left(\frac{\ord_m(\sigma)}{\gcd(\ord_m(\sigma),m)},m_2\right) \mid \gcd\left(\frac{\ord_m(\sigma)}{\gcd(\ord_m(\sigma),m')},\frac{m'}{\gcd(\ord_m(\sigma),m')}\right) = 1,
$$
we obtain that the elements
$$
\pi_{m_2}\left( \frac{\ord_m(\sigma)}{\gcd(\ord_m(\sigma),m)}\sigma^{j_3+\varepsilon_3 k_3}\varepsilon_ud_u  \right),
$$
for all $u\in[1,3]$, and
$$
\pi_{m_2}\left( \frac{\ord_m(\sigma)}{\gcd(\ord_m(\sigma),m)}\sigma^{j_3+\varepsilon_3 k_3}(\varepsilon_vd_v-\varepsilon_ud_u)  \right),
$$
for all distinct integers $u,v\in[1,3]$, are invertible in $\ZZ{m_2}$. Moreover, since the size of $\SuS_k$ is $m_2-\left\lfloor\ \frac{k_1+k_2+k_3}{\alpha} \right\rfloor$, which is congruent to $0$, $-1$, or $-2$ modulo $m_2$, we deduce from Lemma~\ref{lem1} that the multiplicity function of $\SuS_k$ verifies
$$
\m_{\SuS_k}\left(x+m_1\right)=\m_{\SuS_k}(x),
$$
for all $x\in\ZZ{m}$ and for all $k\in[0,\alpha-1]^{3}$. Therefore, we have
$$
\m_{\triangle}\left(x+m_1\right) = \displaystyle\sum_{k\in[0,\alpha-1]^{3}}\m_{\SuS_k}\left(x+m_1\right) = \displaystyle\sum_{k\in[0,\alpha-1]^{3}}\m_{\SuS_k}\left(x\right) = m_{\triangle}\left(x\right),
$$
for all $x\in\ZZ{m}$. Since $\sigma\equiv1\bmod{2^{v_2(m)}}$, it follows that $\ord_m(\sigma)=\ord_{m'}(\sigma)$ and thus $m_1=2^{v_2(m)}\gcd(\ord_m(\sigma),m')=2^{v_2(m)}\gcd(\ord_{m'}(\sigma),m')$ is a proper divisor of $m$. Thus, from the induction hypothesis, we know that the projection of $\triangle$ into $\ZZ{m_1}$ is balanced. Finally, since $\m_{\triangle}(x+m_1)=\m_{\triangle}(x)$ for all $x\in\ZZ{m}$ and $\pi_{m_1}(\triangle)$ balanced in $\ZZ{m_1}$, we conclude that the tetrahedron $\triangle$ is balanced in $\ZZ{m}$ by Theorem~\ref{thmproj}. This completes the proof.
\end{proof}

\subsubsection{For the Pascal cellular automaton}

Here, we investigate the consequences of Theorem~\ref{thm6} on the existence of balanced tetrahedra, in the case where the ACA considered is $\PCA{2}$.

\begin{corollary}
For even numbers $m$ not divisible by $3$ such that $v_2(m)=1$, there exist infinitely many balanced tetrahedra of $\ZZ{m}$ generated by $\PCA{2}$, for all orientations $\varepsilon = +++$, $+-+$, $+--$, $-++$, $-+-$ and $---$. In the remaining case of the two orientations $\varepsilon=++-$ or $\varepsilon=--+$, the existence of an infinite number of such balanced tetrahedra is verified for every $\ZZ{m}$ of even order $m$ such that $v_2(m)=1$ and $\gcd(m,3.5)=1$.
\end{corollary}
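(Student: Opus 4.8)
The plan is to apply Theorem~\ref{thm6} to the automaton $\PCA{2}$, for which $\sigma=3$ and $\sigma_1=\sigma_2=-1$. Since $3\nmid m$, $\sigma$ is invertible, and since $v_2(m)=1$ we have $2^{v_2(m)}=2$ and $\sigma=3\equiv1\bmod 2$, so the standing hypotheses of Theorem~\ref{thm6} hold. Fix an orientation $\varepsilon\in\{-1,1\}^3$ and write $m=2m'$ with $m'$ odd and $3\nmid m'$; recall $d_3=-3^{-1}(d_1+d_2)$. The six hypotheses of Theorem~\ref{thm6} on $(d_1,d_2)$ split into a condition modulo $2$ and a condition modulo $m'$. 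I would first take $d_1$ even and $d_2$ odd, which forces $d_3$ odd and, uniformly in $\varepsilon$, makes the two designated differences $\varepsilon_1d_1$ and $\varepsilon_3d_3-\varepsilon_2d_2$ even while the other four differences $\varepsilon_2d_2$, $\varepsilon_3d_3$, $\varepsilon_2d_2-\varepsilon_1d_1$, $\varepsilon_1d_1-\varepsilon_3d_3$ are odd. It then remains only to make all six common differences coprime to $m'$; combined with the parities this yields $\gcd(\varepsilon_1d_1,m)=\gcd(\varepsilon_3d_3-\varepsilon_2d_2,m)=2$ and the invertibility of the other four, exactly as required.

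Next I would reduce coprimality to $m'$ prime by prime via the Chinese Remainder Theorem. For each prime $p\mid m'$ (so $p\ge 5$, as $m'$ is odd and $3\nmid m'$), clearing the unit $3^{-1}$ turns the six common differences into six nonzero linear forms in $(d_1,d_2)$, and the requirement becomes that $(d_1,d_2)\bmod p$ avoid the six lines through the origin in $(\ZZ{p})^2$ cut out by these forms. A short computation gives their directions: the forms $d_1$, $d_2$, $d_1+d_2$ have slopes $\infty,0,-1$, while the edge forms $\varepsilon_2d_2-\varepsilon_1d_1$, $\varepsilon_3d_3-\varepsilon_2d_2$, $\varepsilon_1d_1-\varepsilon_3d_3$ have slopes $\varepsilon_1\varepsilon_2$, $-1/(1+3\varepsilon_2\varepsilon_3)$ and $-(1+3\varepsilon_1\varepsilon_3)$.

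For $p\ge 7$ there are $p+1>6$ lines through the origin, so six of them cannot cover $(\ZZ{p})^2$ and an admissible residue always exists. The delicate case, and the crux of the argument, is $p=5$, where there are exactly $6$ lines through the origin: the six forms admit a common admissible residue if and only if their slopes do not exhaust $\mathbb{P}^1(\ZZ{5})$. Reducing the slopes above modulo $5$ (using $2^{-1}\equiv3$ and $4^{-1}\equiv4$) and running through the eight orientations, one checks that the six slopes are pairwise distinct, hence cover all of $\mathbb{P}^1(\ZZ{5})$, precisely when $\varepsilon_1=\varepsilon_2=-\varepsilon_3$, that is for $\varepsilon=++-$ and $\varepsilon=--+$; for each of the six remaining orientations at least one of the slopes $2,3$ is omitted, so an admissible residue modulo $5$ exists.

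Finally I would assemble the pieces. For the six good orientations an admissible $(d_1,d_2)\bmod p$ exists for every $p\mid m'$ and, together with the parity choice modulo $2$, the Chinese Remainder Theorem produces $d=(d_1,d_2)\in(\ZZ{m})^2$ satisfying all hypotheses of Theorem~\ref{thm6}; for the two orientations $++-$ and $--+$ the same construction works once $5\nmid m'$, which is ensured by $\gcd(m,3\cdot5)=1$. Theorem~\ref{thm6} then guarantees that in $\orb(\ArA(a,d))$ every tetrahedron of orientation $\varepsilon$ and size $s\equiv 0$ or $-2\bmod\lcm(\ord_m(\sigma),m)$ is balanced, and letting $s$ range over this infinite arithmetic progression yields infinitely many balanced tetrahedra. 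The main obstacle is the modulo-$5$ slope count that isolates the two exceptional orientations and produces the factor $5$; the parity bookkeeping and the generic prime count are routine.
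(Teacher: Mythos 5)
Your argument is correct, and it reaches the same endpoint as the paper --- an application of Theorem~\ref{thm6} to a suitably chosen arithmetic array --- but by a genuinely different route in the step where the common differences are produced. The paper simply exhibits explicit values: $d=(1,2)$ or $(2,1)$ (giving $d_3=-1$) for the six orientations with some $\varepsilon_i=\varepsilon_3$, and $d=(4,5)$ (giving $d_3=-3$) for $\varepsilon=++-$ and $--+$, then checks the six gcd conditions by direct computation; the factor $5$ in the hypothesis appears only because $d_2=5$ in that second choice. You instead characterize admissibility of $(d_1,d_2)$ as avoiding six lines through the origin in $(\ZZ{p})^2$ for each prime $p\mid m'$, dispose of $p\ge7$ by the count $6(p-1)+1<p^2$, and isolate $p=5$ as the only delicate case, where your slope computation (slopes $\infty,0,-1,\varepsilon_1\varepsilon_2,-1/(1+3\varepsilon_2\varepsilon_3),-(1+3\varepsilon_1\varepsilon_3)$, all well defined since $1+3\varepsilon\ne0$ bmod $5$) correctly shows the six lines exhaust $\mathbb{P}^1(\ZZ{5})$ exactly for $\varepsilon_1=\varepsilon_2=-\varepsilon_3$. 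Your parity bookkeeping ($d_1$ even, $d_2$ odd forces $d_3$ odd and puts the two even differences precisely on the non-adjacent pair $(\varepsilon_1d_1,\varepsilon_3d_3-\varepsilon_2d_2)$ designated in Theorem~\ref{thm6}) is also sound, and since $v_2(m)=1$, evenness plus coprimality to $m'$ does give gcd exactly $2$. What your approach buys beyond the paper's is an explanation of why the exclusion of $5$ for the orientations $++-$ and $--+$ is forced within the scope of Theorem~\ref{thm6} (no choice of $(d_1,d_2)$ works modulo $5$ there), and a template that would adapt to other weight arrays; what it costs is length --- the paper's verification is a few lines of arithmetic. No gaps.
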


\begin{proof}
For the Pascal automaton of dimension $2$, we have $\sigma=3$ and $\sigma_1=\sigma_2=-1$. If there exists $i\in\{1,2\}$ such that $\varepsilon_i=\varepsilon_3$, then we consider the orbit associated with the arithmetic array $A=\ArA(a,(d_1,d_2))$ of $\ZZ{m}$, where $d_i:=1$ and $d_j:=2$ with $\{i,j\}=\{1,2\}$. Then, $d_3=-3^{-1}.3=-1$, $d_i+d_j=3$, $d_j-d_i=1$, $d_j+d_3=1$, $d_j-d_3=3$ and $d_i-d_3=2$. It follows that, if $m$ is an even number not divisible by $3$ and such that $v_2(m)=1$, we have
$$
\gcd(\varepsilon_id_i,m) = \gcd(\varepsilon_3d_3,m) = \gcd(\varepsilon_jd_j-\varepsilon_id_i,m) = \gcd(\varepsilon_jd_j-\varepsilon_3d_3,m) = 1
$$
and
$$
\gcd(\varepsilon_jd_j,m) = \gcd(\varepsilon_id_i-\varepsilon_3d_3,m)=2.
$$
Therefore, by Theorem~\ref{thm6}, any tetrahedron of this orientation $\varepsilon$, where $\varepsilon_i=\varepsilon_3$, and of size $s\equiv 0$ or $-2\bmod{\lcm(\ord_m(3),m)}$ appearing in the orbit $\orb(A)$ is balanced in $\ZZ{m}$. Suppose now that $\varepsilon=++-$ or $--+$ and consider the orbit associated with the arithmetic array $A=\ArA(a,(d_1,d_2))$ of $\ZZ{m}$, where $d_1:=4$ and $d_2:=5$. Then, $d_3=-3^{-1}.9=-3$, $d_2-d_1=1$, $d_1+d_3=1$ and $d_2+d_3=2$. It follows that, if $m$ is an even number such that $v_2(m)=1$ and $\gcd(m,3.5)=1$, we have
$$
\gcd(\varepsilon_2d_2,m) = \gcd(\varepsilon_3d_3,m) = \gcd(\varepsilon_2d_2-\varepsilon_1d_1,m) = \gcd(\varepsilon_1d_1-\varepsilon_3d_3,m) = 1
$$
and
$$
\gcd(\varepsilon_1d_1,m) = \gcd(\varepsilon_2d_2-\varepsilon_3d_3,m)=2.
$$
Therefore, by Theorem~\ref{thm6}, any tetrahedron of the orientation $\varepsilon=++-$ or $--+$ and of size $s\equiv 0$ or $-2\bmod{\lcm(\ord_m(3),m)}$ appearing in the orbit $\orb(A)$ is balanced in $\ZZ{m}$. This concludes the proof.
\end{proof}

\section{The antisymmetric case}

We begin this section by defining the antisymmetric sequences and the antisymmetric simplices.

\begin{definition}[Antisymmetric sequences]
A finite sequence $S=(a_1,\ldots,a_{s})$ of length $s\ge 1$ in $\ZZ{m}$ is said to be \textit{antisymmetric} if $a_i+a_{s-i+1}=0$ for all $i\in[1,s]$.
\end{definition}

For instance, the sequence $S=(2,2,1,0,4,3,3)$ is antisymmetric in $\ZZ{5}$.

\begin{definition}[Antisymmetric simplices]
Let $A=\left(a_{i}\right)_{i\in\Z^{n}}$ be an infinite array of elements in $\ZZ{m}$ and let $\triangle(j,\varepsilon,s)$ be the $n$-simplex of size $s$, with orientation $\varepsilon\in\{-1,1\}^{n}$ and whose principal vertex is $a_j$ in $A$, that is,
$$
\triangle(j,\varepsilon,s) = \left\{ a_{j+\varepsilon\cdot k} \ \middle|\ k\in\N^{n}\ \text{such that}\ k_1+\cdots+k_n\le s-1\right\}.
$$
Let $u$ and $v$ be two distinct integers in $[0,n]$. The simplex $\triangle(j,\varepsilon,s)$ is said to be \textit{$(u,v)$-antisymmetric} if all its subsequences in the same direction of the edge $\E_{u,v}$ between the vertices $\V_{u}$ and $\V_{v}$ are antisymmetric. More precisely, $\triangle(j,\varepsilon,s)$ is $(0,v)$-antisymmetric if we have
$$
a_{j+\varepsilon\cdot k} + a_{j+\varepsilon\cdot s(k)} = 0,\text{ where } s(k) = \left(k_1,\ldots,k_{v-1},s-1-\sum_{l=1}^{n}k_l,k_{v+1},\ldots,k_n\right),
$$
for all $k\in\N^n$ such that $k_1+\cdots+k_n\le s-1$ and, for $u,v\ge1$, $\triangle(j,\varepsilon,s)$ is $(u,v)$-antisymmetric if we have
$$
a_{j+\varepsilon\cdot k} + a_{j+\varepsilon\cdot t(k)} = 0,\ \text{where}\ (t(k))_l = k_{\tau(l)}\ \text{for all}\ l\in[1,n],
$$
where $\tau$ is the transposition $(u,v)$, for all $k\in\N^n$ such that $k_1+\cdots+k_n\le s-1$.
\end{definition}

For instance, the tetrahedron depicted in Figure~\ref{fig9} is $(1,2)$-antisymmetric. Moreover, each row of this tetrahedron is an $(1,2)$-antisymmetric triangle.

\begin{figure}
\begin{center}
\begin{tikzpicture}[scale=0.5]

\draw (1,1) -- (1,6);
\draw (2,1) -- (2,6);
\draw (3,2) -- (3,6);
\draw (4,3) -- (4,6);
\draw (5,4) -- (5,6);
\draw (6,5) -- (6,6);
\draw (1,6) -- (6,6);
\draw (1,5) -- (6,5);
\draw (1,4) -- (5,4);
\draw (1,3) -- (4,3);
\draw (1,2) -- (3,2);
\draw (1,1) -- (2,1);

\node at (1.5,5.5) {$0$};
\node at (2.5,5.5) {$1$};
\node at (3.5,5.5) {$1$};
\node at (4.5,5.5) {$3$};
\node at (5.5,5.5) {$6$};
\node at (1.5,4.5) {$6$};
\node at (2.5,4.5) {$0$};
\node at (3.5,4.5) {$4$};
\node at (4.5,4.5) {$5$};
\node at (1.5,3.5) {$6$};
\node at (2.5,3.5) {$3$};
\node at (3.5,3.5) {$0$};
\node at (1.5,2.5) {$4$};
\node at (2.5,2.5) {$2$};
\node at (1.5,1.5) {$1$};

\draw (7,2) -- (7,6);
\draw (8,2) -- (8,6);
\draw (9,3) -- (9,6);
\draw (10,4) -- (10,6);
\draw (11,5) -- (11,6);
\draw (7,6) -- (11,6);
\draw (7,5) -- (11,5);
\draw (7,4) -- (10,4);
\draw (7,3) -- (9,3);
\draw (7,2) -- (8,2);

\node at (7.5,5.5) {$0$};
\node at (8.5,5.5) {$4$};
\node at (9.5,5.5) {$0$};
\node at (10.5,5.5) {$1$};
\node at (7.5,4.5) {$3$};
\node at (8.5,4.5) {$0$};
\node at (9.5,4.5) {$3$};
\node at (7.5,3.5) {$0$};
\node at (8.5,3.5) {$4$};
\node at (7.5,2.5) {$6$};

\draw (12,3) -- (12,6);
\draw (13,3) -- (13,6);
\draw (14,4) -- (14,6);
\draw (15,5) -- (15,6);
\draw (12,6) -- (15,6);
\draw (12,5) -- (15,5);
\draw (12,4) -- (14,4);
\draw (12,3) -- (13,3);

\node at (12.5,5.5) {$0$};
\node at (13.5,5.5) {$2$};
\node at (14.5,5.5) {$3$};
\node at (12.5,4.5) {$5$};
\node at (13.5,4.5) {$0$};
\node at (12.5,3.5) {$4$};

\draw (16,4) -- (16,6);
\draw (17,4) -- (17,6);
\draw (18,5) -- (18,6);
\draw (16,6) -- (18,6);
\draw (16,5) -- (18,5);
\draw (16,4) -- (17,4);

\node at (16.5,5.5) {$0$};
\node at (17.5,5.5) {$5$};
\node at (16.5,4.5) {$2$};

\draw (19,6) -- (20,6) -- (20,5) -- (19,5) -- (19,6);

\node at (19.5,5.5) {$0$};

\end{tikzpicture}
\end{center}
\caption{An $(1,2)$-antisymmetric tetrahedron in $\ZZ{7}$}\label{fig9}
\end{figure}

\subsection{Antisymmetric simplices}

Let $m$ and $n$ be two positive integers such that $n\ge 2$ and $\gcd(m,n!)=1$. In the sequel of this section, we consider $n$-simplices $\triangle(j,\varepsilon,s)$ appearing in the orbit of the arithmetic array $\ArA(a,d)$, where $a\in\ZZ{m}$ and $d=(d_1,d_2,\ldots,d_{n-1})\in\left(\ZZ{m}\right)^{n-1}$, generated by an ACA of weight array $W=(w_l)_{l\in[-r,r]^{n-1}}$. The elements of this orbit are denoted by $\orb(\ArA(a,d))=(a_i)_{i\in\Z^{n-1}\times\N}$. As already defined before,
$$
\sigma := \sum_{l\in[-r,r]^{n-1}}w_l\quad\text{and}\quad \sigma_k = \sum_{l\in[-r,r]^{n-1}}l_k w_{l},\ \text{for all }k\in[1,n-1].
$$
Moreover, suppose that $\sigma$ is invertible modulo $m$ and let
$$
d_n: = \sigma^{-1}\sum_{k=1}^{n-1}\sigma_k d_k.
$$
In this subsection, necessary conditions on simplices for being antisymmetric are determined.

\begin{proposition}\label{prop13}
Let $u$ and $v$ be two distinct integers in $[1,n]$. If $\triangle(j,\varepsilon,s)$ is $(u,v)$-antisymmetric, then $d_w=0$ for all $w\in[1,n]\setminus\{u,v\}$ and $\varepsilon_{u}d_{u}+\varepsilon_{v}d_{v} = 0$.
\end{proposition}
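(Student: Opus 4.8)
The plan is to use the explicit arithmetico-geometric description of the orbit together with the fact that an antisymmetry relation becomes trivial at the fixed points of the underlying reflection. By Proposition~\ref{prop4}, every entry of $\triangle(j,\varepsilon,s)$ can be written as
$$
a_{j+\varepsilon\cdot k} = \sigma^{j_n+\varepsilon_n k_n}\,P(k), \qquad P(k) := a + \sum_{w=1}^{n}(j_w+\varepsilon_w k_w)d_w ,
$$
so $P$ is an affine function of $k\in\N^n$. The three invertibilities that drive the argument are: $\gcd(m,n!)=1$ with $n\ge2$ forces $m$ odd, hence $2$ is invertible modulo $m$; $\sigma$ is invertible by assumption; and each $\varepsilon_w=\pm1$ is a unit. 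In particular, whenever $a_{j+\varepsilon\cdot k}=0$ we may conclude $P(k)=0$.

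The key observation is that the reflection $t$ fixes precisely those $k$ with $k_u=k_v$, since $t(k)=k$ means $k_{\tau(l)}=k_l$ for all $l$, which for $\tau=(u,v)$ reduces to $k_u=k_v$. At any such fixed point the antisymmetry relation reads $2a_{j+\varepsilon\cdot k}=0$, whence $P(k)=0$ by the invertibilities above. Restricting $P$ to the set $\{k_u=k_v=:c\}$ gives
$$
\Bigl(a+\sum_{w=1}^{n}j_w d_w\Bigr) + \sum_{w\neq u,v}\varepsilon_w d_w\,k_w + (\varepsilon_u d_u+\varepsilon_v d_v)\,c = 0 .
$$
Evaluating this identity at $k=0$, at $k=e_w$ for each $w\in[1,n]\setminus\{u,v\}$, and at $k=e_u+e_v$ — all fixed points of $t$ lying in the simplex as soon as $s\ge3$ — and reading off the coefficients of this affine form, I would obtain successively $a+\sum_w j_w d_w=0$, then $\varepsilon_w d_w=0$ and hence $d_w=0$ for every $w\ne u,v$, and finally $\varepsilon_u d_u+\varepsilon_v d_v=0$, which are exactly the two asserted conditions.

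The point where care is needed is the case $v=n$, i.e. when the chosen edge runs in the temporal direction of the orbit: there the entries along such an edge sit on different rows and carry different powers of the ratio $\sigma$, so comparing two \emph{distinct} entries would introduce a spurious factor $\sigma^{\pm1}$ and would only yield $\varepsilon_u d_u+\sigma^{\pm1}\varepsilon_n d_n=0$. Evaluating $P$ solely at fixed points of $t$ avoids this completely, because at a fixed point one compares an entry with itself; this is the reason the argument needs no hypothesis on $\sigma$ beyond invertibility and works uniformly for all pairs $1\le u<v\le n$. The only mild restriction is that deriving $\varepsilon_u d_u+\varepsilon_v d_v=0$ requires the fixed point $e_u+e_v$ to belong to the simplex, i.e. $s\ge3$, which is harmless for the sizes considered in the sequel.
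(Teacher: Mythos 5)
Your proposal is correct and follows essentially the same route as the paper: both arguments observe that the fixed points of the reflection are exactly the $k$ with $k_u=k_v$, deduce $a_{j+\varepsilon\cdot k}=0$ there from the invertibility of $2$ (since $m$ is odd) and of $\sigma$, and then evaluate the resulting affine identity at $k=0$, $k=e_w$ for $w\notin\{u,v\}$, and $k=e_u+e_v$ to read off the two conclusions. Your added remarks on the case $v=n$ and on needing $s\ge 3$ are sound but not substantively different from what the paper does implicitly.
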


\begin{proof}
Let $k\in\N^{n}$ such that $k_1+\cdots+k_n\le s-1$. If $k_{u}=k_{v}$, then $\tau(k)=k$ and
$$
2a_{j+\varepsilon \cdot k} = a_{j+\varepsilon \cdot k}+a_{j+\varepsilon\cdot\tau(k)} = 0,
$$
by definition of the $(u,v)$-antisymmetry. It follows that $a_{j+\varepsilon\cdot k}=0$ in this case. Therefore, if $k_u=k_v$, we have
$$
\sigma^{j_n+\varepsilon_n k_n}\left(a+\sum_{l=1}^{n}(j_l+\varepsilon_l k_l)d_l\right) = 0 \ \Longleftrightarrow\ a+\sum_{l=1}^{n}(j_l+\varepsilon_l k_l)d_l = 0.
$$
For $k=0$, we obtain that
$$
a + \sum_{l=1}^{n}j_l d_l = 0.
$$
Now, we consider the canonical basis $(e_1,\ldots,e_n)$ of the vector space $\Z^n$. For all $w\in[1,n]\setminus\{u,v\}$, since $k_{u}=k_{v}=0$ in $k=e_w$, we have
$$
a_{j+\varepsilon\cdot e_w}=0\ \Longleftrightarrow\  a+\sum_{l=1}^{n}j_ld_l + \varepsilon_w d_w = 0\ \Longleftrightarrow\ \varepsilon_w d_w=0 \ \Longleftrightarrow\ d_w=0.
$$
Finally, for $k=e_u+e_v$, since $k_u=k_v=1$, we obtain that
$$
a_{j+\varepsilon\cdot k}=0\ \Longleftrightarrow\ a + \sum_{l=1}^{n}j_ld_l + \varepsilon_{u}d_{u} + \varepsilon_{v}d_{v} = 0\ \Longleftrightarrow\  \varepsilon_{u}d_{u} + \varepsilon_{v}d_{v} = 0.
$$
This completes the proof.
\end{proof}

\begin{proposition}\label{prop14}
Let $v$ be an integer in $[1,n]$. If $\triangle(j,\varepsilon,s)$ is $(0,v)$-antisymmetric, then $2\varepsilon_wd_w=\varepsilon_v d_v$ for all $w\in[1,n]\setminus\{v\}$.
\end{proposition}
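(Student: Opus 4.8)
The plan is to follow the pattern of the proof of Proposition~\ref{prop13}, exploiting the points $k$ that are fixed by the reflection $k\mapsto s(k)$ underlying the $(0,v)$-antisymmetry. First I would record that $\gcd(m,n!)=1$ with $n\ge2$ forces $m$ to be odd, so that $2$ is invertible in $\ZZ{m}$, and recall from Proposition~\ref{prop4} that
$a_{j+\varepsilon\cdot k}=\sigma^{j_n+\varepsilon_nk_n}\bigl(a+\sum_{l=1}^n(j_l+\varepsilon_lk_l)d_l\bigr)$, where $\sigma$ is invertible.

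The key observation is that a point $k\in\N^n$ with $k_1+\cdots+k_n\le s-1$ is fixed by $s(\cdot)$ exactly when $(s(k))_v=k_v$, i.e.\ when $\sum_{l\ne v}k_l+2k_v=s-1$. At such a $k$ the defining relation of $(0,v)$-antisymmetry reads $2a_{j+\varepsilon\cdot k}=0$, because $j+\varepsilon\cdot s(k)=j+\varepsilon\cdot k$; since $2$ and $\sigma$ are both invertible, this yields $a+\sum_{l=1}^n(j_l+\varepsilon_lk_l)d_l=0$ at every such $k$. I would emphasise that this argument is insensitive to whether $v=n$ or $v<n$: at a fixed point the two array entries coincide, so the power of $\sigma$ is merely a common invertible factor and drops out. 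This is exactly the delicate point, since for $v=n$ the reflection otherwise alters the time coordinate $k_n$, hence the exponent of $\sigma$, which would obstruct a naïve coefficient comparison.

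Then, for each $w\in[1,n]\setminus\{v\}$, I would evaluate this vanishing identity at the two fixed points $k=(s-1)e_w$ and $k=(s-3)e_w+e_v$, where $(e_1,\ldots,e_n)$ is the canonical basis. Both lie in the simplex and satisfy $\sum_{l\ne v}k_l+2k_v=s-1$ for $s\ge3$. Subtracting the two resulting equations makes all the $j$-terms cancel, leaving $(s-1)\varepsilon_wd_w-\bigl((s-3)\varepsilon_wd_w+\varepsilon_vd_v\bigr)=0$, that is $2\varepsilon_wd_w=\varepsilon_vd_v$, as claimed.

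The main obstacle is thus the case $v=n$, circumvented above by working only with fixed points so that the geometric factor $\sigma^{j_n+\varepsilon_nk_n}$ disappears; for $v<n$ one could alternatively cancel this factor directly and compare coefficients of $k_w$ in the full antisymmetry relation. The only remaining care concerns small sizes $s\le2$, where the displayed pair of fixed points degenerates: there the conclusion is read off from the antisymmetry relation at $k=0$ together with the fixed points $e_w$ ($w\ne v$), the generic argument applying unchanged once $s\ge3$.
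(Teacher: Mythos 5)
Your proof is correct and follows essentially the same route as the paper's: identify the points fixed by the reflection $s(\cdot)$, deduce that the array vanishes there (using that $2$ and $\sigma$ are invertible), and evaluate at the two fixed points $(s-1)e_w$ and $(s-3)e_w+e_v$ before subtracting. Your additional remarks on the case $v=n$ and on sizes $s\le 2$ address implicit assumptions that the paper's proof also makes silently (in particular it too needs $s\ge 3$ for the second fixed point to lie in $\N^n$), so they do not change the substance of the argument.
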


\begin{proof}
Let $k\in\N^{n}$ such that $k_1+\cdots+k_n\le s-1$. If $k_{v}=s-1-\sum_{l=1}^{n}k_l$, then $s(k)=k$ and
$$
2a_{j+\varepsilon\cdot k} = a_{j+\varepsilon\cdot k}+a_{j+\varepsilon\cdot s(k)} = 0,
$$
by definition of the $(0,v)$-antisymmetry. It follows that $a_{j+\varepsilon\cdot k}=0$ in this case. Therefore, if $k_{v}=s-1-\sum_{l=1}^{n}k_l$, we have
$$
\sigma^{j_n+\varepsilon_nk_n}\left(a+\sum_{l=1}^{n}(j_l+\varepsilon_lk_l)d_l\right) = 0 \ \Longleftrightarrow\ a+\sum_{l=1}^{n}(j_l+\varepsilon_lk_l)d_l = 0.
$$
Let $w\in[1,n]\setminus\{v\}$. Since $k_{v}+\sum_{l=1}^{n}k_l=s-1$ for $k=(s-1)e_w$, we have
$$
a_{j+\varepsilon\cdot (s-1)e_w}=0\ \Longleftrightarrow\ a+\sum_{l=1}^{n}j_ld_l + \varepsilon_w(s-1)d_w=0.
$$
Moreover, for $k=e_{v}+(s-3)e_{w}$, since $\sum_{l=1}^{n}k_l+k_{v}=s-1$, we obtain that
$$
a_{j+\varepsilon\cdot k}=0 \Longleftrightarrow a + \displaystyle\sum_{l=1}^{n}j_ld_l + \varepsilon_{v}d_{v} + \varepsilon_{w}(s-3)d_{w} = 0.
$$
It follows that
$$
\varepsilon_{v}d_{v} + \varepsilon_{w}(s-3)d_{w} = \varepsilon_w(s-1)d_w \Longleftrightarrow \varepsilon_{v}d_{v} = 2\varepsilon_w d_w.
$$
This completes the proof.
\end{proof}

For $n\ge3$, it is easy to see from Proposition~\ref{prop13} and Proposition~\ref{prop14} that if the simplex $\triangle(j,\varepsilon,s)$ is $(u,v)$-antisymmetric, then there is at least one element among the elements $\varepsilon_id_i$, for all $i\in[1,n]$, and $\varepsilon_jd_j-\varepsilon_id_i$, for all distinct integers $i,j\in[1,n]$, which is non-invertible and equal to zero in $\ZZ{m}$. In this case, the hypotheses of Theorem~\ref{thm2} are not satisfied. Therefore, in the next subsection, we only consider the case of dimension $n=2$.

\subsection{In dimension 2}

An arithmetic array of dimension $1$ is simply called an \textit{arithmetic progression} and is denoted by $\mathrm{AP}(a,d)$ or $\mathrm{AP}(a,d,s)$, for an arithmetic progression of length $s$, that is,
$$
\AP(a,d,s) = (a,a+d,a+2d,\ldots,a+(s-1)d).
$$
Let $W=(w_{-r},\ldots,w_r)\in\Z^{2r+1}$ be the weight sequence of the ACA of dimension $1$ that we consider here. By Lemma~\ref{prop2}, we know that the derived sequence of an arithmetic progression is also an arithmetic progression. Indeed, we have
$$
\partial\AP(a,d) = \AP(\sigma a+\sigma'd,\sigma d),
$$
where
$$
\sigma = \sum_{i=-r}^{r}w_i \quad\text{and}\quad \sigma' = \sum_{i=-r}^{r}i w_i.
$$
As already remarked, for $\overline{W}=(0,\sigma-\sigma',\sigma')$, we obtain that $\sigma\left(\overline{W}\right) = \sigma(W)$ and $\sigma'\left(\overline{W}\right) = \sigma'(W)$. Thus, the orbits of $\AP(a,d)$ are the same if we consider $W$ or $\overline{W}$. Therefore, in the sequel of this subsection, we only consider the case where $r=1$ and $W=(0,\sigma-\sigma',\sigma')$.

\subsubsection{(1,2)-antisymmetric triangles}

First, we know from Proposition~\ref{prop13} that if the triangle $\triangle(j,\varepsilon,s)$, appearing in the orbit $\orb(\AP(a,d))$ with $d$ invertible, is $(1,2)$-antisymmetric, then
$$
\varepsilon_1d + \varepsilon_2\sigma^{-1}\sigma'd = 0\ \Longleftrightarrow\  \varepsilon_1\sigma+\varepsilon_2\sigma'=0\ \Longleftrightarrow\ \sigma' = -\varepsilon_1\varepsilon_2\sigma.
$$
So, we deduce that
$$
W = (0,(1+\varepsilon_1\varepsilon_2)\sigma,-\varepsilon_1\varepsilon_2\sigma).
$$
Since $\triangle(j,\varepsilon,s)$ is $(1,2)$-antisymmetric, we know that $a_j=0$ and $a_{j+\varepsilon\cdot e_1}+a_{j+\varepsilon\cdot e_2} = 0$. It follows that
$$
a_j=0 \ \Longleftrightarrow\ a+j_1d_1+j_2d_2 = 0,
$$
and
$$
a_{j+\varepsilon\cdot e_1} + a_{j+\varepsilon\cdot e_2} = 0
\begin{array}[t]{cl}
\Longleftrightarrow & \sigma^{j_2}(a+(j_1+\varepsilon_1)d_1 + j_2d_2) + \sigma^{j_2+\varepsilon_2}(a+j_1d_1 + (j_2+\varepsilon_2)d_2) = 0 \\
\Longrightarrow & \varepsilon_1d_1 + \sigma^{\varepsilon_2}\varepsilon_2d_2 = 0 \\
\Longleftrightarrow & \varepsilon_1 + \sigma^{\varepsilon_2}\varepsilon_2\sigma^{-1}\sigma' = 0.
\end{array}
$$
This implies that $\sigma^{\varepsilon_2}=1$ and thus $\sigma=1$. Therefore,
$$
W = (0,1+\varepsilon_1\varepsilon_2,-\varepsilon_1\varepsilon_2).
$$
Finally, since $\sigma=1$, we know that $\triangle(j,\varepsilon,s)$ is an arithmetic triangle which is already balanced for all $s\equiv 0$ or $-1\bmod{m}$ by Theorem~\ref{thm2}.

\subsubsection{(0,2)-antisymmetric triangles}

First, we know from Proposition~\ref{prop14} that if the triangle $\triangle(j,\varepsilon,s)$, appearing in the orbit $\orb(\AP(a,d))$ with $d$ invertible, is $(0,2)$-antisymmetric, then
$$
\varepsilon_2\sigma^{-1}\sigma' = 2\varepsilon_1\ \Longleftrightarrow\ \sigma' = 2\varepsilon_1\varepsilon_2\sigma.
$$
So, we deduce that
$$
W = (0,(1-2\varepsilon_1\varepsilon_2)\sigma,2\varepsilon_1\varepsilon_2\sigma).
$$
Since $\triangle(j,\varepsilon,s)$ is $(0,2)$-antisymmetric, we know that $a_{j+(s-1)\varepsilon\cdot e_1}=0$ and $a_{j+(s-2)\varepsilon\cdot e_1} + a_{j+\varepsilon\cdot((s-2)e_1+e_2)} = 0$. It follows that
$$
a_{j+(s-1)\varepsilon\cdot e_1}=0 \ \Longleftrightarrow\ a+(j_1+\varepsilon_1(s-1))d_1+j_2d_2 = 0,
$$
and
$$
a_{j+(s-2)\varepsilon\cdot e_1} + a_{j+\varepsilon\cdot((s-2)e_1+e_2)} = 0
\begin{array}[t]{l}
\Longleftrightarrow\quad \begin{array}[t]{l} \sigma^{j_2}(a+(j_1+\varepsilon_1(s-2))d_1 + j_2d_2) \\ + \sigma^{j_2+\varepsilon_2}(a+(j_1+\varepsilon_1(s-2))d_1 + (j_2+\varepsilon_2)d_2) = 0 \end{array}\\
\Longrightarrow\quad -\varepsilon_1d_1 + \sigma^{\varepsilon_2}(-\varepsilon_1d_1+\varepsilon_2d_2) = 0 \\
\Longleftrightarrow\quad -\varepsilon_1 + \sigma^{\varepsilon_2}(-\varepsilon_1+\varepsilon_2\sigma^{-1}\sigma') = 0\\
\Longleftrightarrow\quad \sigma' = \displaystyle\frac{1+\sigma^{\varepsilon_2}}{\sigma^{\varepsilon_2}}\varepsilon_1\varepsilon_2\sigma
\end{array}
$$
This implies that $\sigma^{\varepsilon_2}=1$ and thus $\sigma=1$. Therefore,
$$
W = (0,1-2\varepsilon_1\varepsilon_2,2\varepsilon_1\varepsilon_2).
$$
Finally, since $\sigma=1$, we know that $\triangle(j,\varepsilon,s)$ is an arithmetic triangle which is already balanced for all $s\equiv 0$ or $-1\bmod{m}$ by Theorem~\ref{thm2}.

\subsubsection{(0,1)-antisymmetric triangles}

First, we know from Proposition~\ref{prop14} that if the simplex $\triangle(j,\varepsilon,s)$ is $(0,1)$-antisymmetric, then
$$
\varepsilon_1 = 2\varepsilon_2\sigma^{-1}\sigma' \quad\Longleftrightarrow\quad \sigma = 2\varepsilon_1\varepsilon_2\sigma'.
$$
So, we deduce that
$$
W = (0,(2\varepsilon_1\varepsilon_2-1)\sigma',\sigma').
$$

Now, we refine Theorem~\ref{thm1} in this case by considering triangles that have the additional property to be $(0,1)$-antisymmetric.

\begin{theorem}\label{thm7}
Let $m$ be an odd positive integers and let $W\in\Z^{2r+1}$ such that $\sigma=2\sigma'$ and $\sigma$ is invertible modulo $m$. Let $a,d\in\ZZ{m}$ such that $d$ is invertible. Then, in the orbit $\orb\left(\AP(a,d)\right)$, every $(0,1)$-antisymmetric triangle of orientation $(++)$ or $(--)$ and of size $s$ is balanced, for all positive integers $s\equiv 0$ or $-1 \bmod \lcm(\pord_m(\sigma),m)$, where $\pord_m(\sigma)$ is the multiplicative order of $\sigma$ in $\left(\ZZ{m}\right)^{*}/\{-1,1\}$.
\end{theorem}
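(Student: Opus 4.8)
\noindent\emph{Proof strategy.} The plan is to mimic the induction on $m$ used in the proof of Theorem~\ref{thm1bis}, the only new ingredient being the exploitation of the antisymmetry in order to replace $\ord_m(\sigma)$ by $\pord_m(\sigma)$. Observe first that $\sigma^{\pord_m(\sigma)}\equiv\pm1\bmod m$. If $\sigma^{\pord_m(\sigma)}\equiv1$, then $\pord_m(\sigma)=\ord_m(\sigma)$, so $\lcm(\pord_m(\sigma),m)=\lcm(\ord_m(\sigma),m)$ and the statement is exactly the case $n=2$ of Theorem~\ref{thm1bis}: indeed $d_1=d$ and $d_2=\sigma^{-1}\sigma'd=2^{-1}d$ are invertible (as $m$ is odd and $\sigma=2\sigma'$), and $\varepsilon_2d_2-\varepsilon_1d_1=\pm2^{-1}d$ is invertible for $\varepsilon\in\{(+,+),(-,-)\}$. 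Hence the whole content lies in the case $\sigma^{\pord_m(\sigma)}\equiv-1$, where $\ord_m(\sigma)=2\pord_m(\sigma)$ and $\lcm(\pord_m(\sigma),m)=\tfrac12\lcm(\ord_m(\sigma),m)$; the antisymmetry must compensate precisely for this missing factor $2$.

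Write $m_1:=\gcd(\pord_m(\sigma),m)$ and $m_2:=m/m_1$, so that $\lcm(\pord_m(\sigma),m)=\pord_m(\sigma)m_2$ and $s=\lambda\pord_m(\sigma)m_2-t$ with $t\in\{0,1\}$; since $\pord_m(\sigma)\le\ord_m(\sigma)\le\varphi(m)<m$ we have $m_1<m$, so the induction is legitimate. Exactly as in Theorem~\ref{thm1bis}, I would split the proof, via the Projection Theorem~\ref{thmproj}, into the two claims: (a) $\m_{\triangle}(x+m_1)=\m_{\triangle}(x)$ for all $x\in\ZZ{m}$, and (b) $\pi_{m_1}(\triangle)$ is balanced in $\ZZ{m_1}$. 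Claim (b) is the induction hypothesis: $\pi_{m_1}(\triangle)$ is again a $(0,1)$-antisymmetric triangle of the same orientation appearing in $\orb(\AP(\pi_{m_1}(a),\pi_{m_1}(d)))$; the relation $\sigma=2\sigma'$, the invertibility of $d$ and the oddness of $m$ all survive projection, and $\lcm(\pord_{m_1}(\sigma),m_1)$ divides $\lcm(\pord_m(\sigma),m)$ (from $\pord_{m_1}(\sigma)\mid\pord_m(\sigma)$, which follows from $\sigma^{\pord_m(\sigma)}\equiv\pm1\bmod m_1$, and from $m_1\mid m$), so that $s\equiv0$ or $-1\bmod\lcm(\pord_{m_1}(\sigma),m_1)$, as required.

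The heart of the matter is claim (a) when $\sigma^{\pord_m(\sigma)}\equiv-1$. Here I would not invoke the subsimplex decomposition of Proposition~\ref{propsub} (which requires $\ord_m(\sigma)\mid\alpha$) but decompose $\triangle$ into its rows. By Proposition~\ref{facearith} each row $\R_k(\triangle)$ is an arithmetic progression of length $s-k$ with invertible common difference $\delta_k=\sigma^{j_2+\varepsilon_2 k}\varepsilon_1d_1$, and the $(0,1)$-antisymmetry (which forces the compatibility $\sigma=2\sigma'$ by Proposition~\ref{prop14}) makes each such row an antisymmetric progression, hence centered at $0$. An elementary count, in the spirit of Lemma~\ref{prop1}, then gives $\m_{\R_k}(x)=\lfloor(s-k)/m\rfloor+\chi_{E_k}(x)$, where $E_k$ is the set of residues attained once more than the others. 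The decisive feature is that $E_k$ is \emph{negation-symmetric}, $-E_k=E_k$, and of the form $\delta_k\cdot B$ with $B$ a symmetric set determined by $s-k$. Because $E_k$ is symmetric it depends on the scaling $\delta_k$ only through its class modulo $\pm1$, and since $\sigma^{\pord_m(\sigma)}\equiv-1$ this class $\sigma^{\varepsilon_2 k}\bmod\{\pm1\}$ has order $\pord_m(\sigma)$ rather than $\ord_m(\sigma)$. Summing $\m_{\triangle}=\sum_{k=0}^{s-1}\m_{\R_k}$, the first term contributes a constant, and the congruence $s\equiv0$ or $-1\bmod\lcm(\pord_m(\sigma),m)$ is meant to guarantee that the family $(E_k)_{0\le k<s}$ covers $\ZZ{m}$ with an $m_1$-periodic multiplicity (constant when $m_1=1$), which is (a). The offset $t=1$, i.e. $s\equiv-1$, I would reduce to $t=0$ exactly as in Case~2 of the proof of Theorem~\ref{thm2}, viewing $\triangle$ as a triangle of size $s+1\equiv0\bmod m$ with its top row removed, the latter being an arithmetic progression of length $\equiv0\bmod m$ with invertible step, hence balanced.

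The main obstacle is the equidistribution count closing the previous paragraph: one must prove that, over the relevant range of $k$, the symmetric excess sets $\delta_k\cdot B$ cover $\ZZ{m}$ evenly up to $m_1$-periodicity. Two features have to be reconciled: the set $B$ depends on $s-k$ (morally with period $m$, although the centering introduces a parity refinement through the half-integer shift $\tfrac{s-k-1}{2}$), whereas the scaling class $\sigma^{\varepsilon_2 k}\bmod\{\pm1\}$ has period only $\pord_m(\sigma)$, the two being coupled modulo $m_1=\gcd(\pord_m(\sigma),m)$. It is precisely the antisymmetry hypothesis that collapses the scaling period from $\ord_m(\sigma)$ to $\pord_m(\sigma)$ and so lets the count close over $\lcm(\pord_m(\sigma),m)$ instead of over $\lcm(\ord_m(\sigma),m)$; carrying out this equidistribution rigorously, and separating the clean full-period contribution from the residue handled by the induction and the Projection Theorem, is where the real work lies.
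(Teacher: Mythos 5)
Your reduction of the case $\sigma^{\pord_m(\sigma)}\equiv 1$ to Theorem~\ref{thm1bis}, and of the size $s\equiv-1$ to the size $s\equiv 0$ by stripping off a balanced top row, are both correct and agree with the paper. But the heart of the theorem is the case $\ord_m(\sigma)=2\pord_m(\sigma)$ with $s\equiv 0\bmod{\lcm(\pord_m(\sigma),m)}$, and there your argument has a genuine gap: the ``equidistribution count'' that is supposed to establish your claim (a) --- that the symmetric excess sets $\delta_k\cdot B$ of the rows cover $\ZZ{m}$ with $m_1$-periodic multiplicity --- is never carried out, and you say yourself that this ``is where the real work lies.'' As stated it is only a heuristic: the set $B$ depends on $s-k$ through a half-integer centering whose interaction with the scaling class $\sigma^{\varepsilon_2 k}\bmod\{\pm1\}$ is exactly the difficulty, and nothing in the proposal shows that the two periodicities mesh to produce an $m_1$-periodic sum. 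So the proposal does not prove the theorem.

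The paper avoids this entirely with a doubling trick worth comparing. For $s\equiv0\bmod{\lcm(\pord_m(\sigma),m)}$ one has $2s\equiv0\bmod{\lcm(\ord_m(\sigma),m)}$, so the triangle $\triangle'=\triangle(j,\varepsilon,2s)$ is balanced by Theorem~\ref{thm1bis}. This $\triangle'$ decomposes into the two size-$s$ triangles $\triangle(j,\varepsilon,s)$ and $\triangle((j_1,j_2+\varepsilon_2 s),\varepsilon,s)$ together with $s$ arithmetic progressions of invertible common difference and length $s\equiv0\bmod{m}$, each balanced. Since $s\equiv0\bmod{\lcm(\pord_m(\sigma),m)}$, the row $\partial^{j_2+\varepsilon_2 s}\AP(a,d)$ equals $\pm\partial^{j_2}\AP(a,d)$: in the $+$ case the two triangles coincide, and in the $-$ case the second is the negation of the first, whence by antisymmetry
$$
\m_{\triangle((j_1,j_2+\varepsilon_2 s),\varepsilon,s)}(x)=\m_{\triangle}(-x)=\m_{\triangle}(x).
$$
Either way $2\m_{\triangle}$ is the multiplicity function of a difference of balanced multisets, so $\triangle$ is balanced. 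The antisymmetry thus enters as invariance of the multiplicity function under negation, not as a row-by-row centering, and this replaces your unproven equidistribution step with two applications of results already established.
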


\begin{proof}
Let $\triangle=\triangle(j,\varepsilon,s)$ be a triangle of size $s$ and with orientation $\varepsilon=++$ or $--$ appearing in the orbit $\orb(\AP(a,d))=(a_{i})_{i\in\Z\times\N}$. It is clear that $\ord_m(\sigma)=\pord_m(\sigma)$ or $\ord_m(\sigma)=2\pord_m(\sigma)$. From Theorem~\ref{thm1}, we already know that $\triangle$ is balanced for $s\equiv 0$ or $-1 \bmod{\lcm(\ord_m(\sigma),m)}$. Suppose now that $\ord_m(\sigma)=2\pord_m(\sigma)$. Different cases for the congruence of $s$ modulo $\pord_m(\sigma)$ are distinguished.
\setcounter{case}{0}
\begin{case}
$s\equiv 0\bmod{\lcm(\pord_m(\sigma),m)}$.\\
We consider the triangle $\triangle'=\triangle(j,\varepsilon,2s)$. Since $2s\equiv 0\bmod{\lcm(\ord_m(\sigma),m)}$, the triangle $\triangle'$ is balanced by Theorem~\ref{thm1}. As depicted in Figure~\ref{fig8}, $\triangle'$ can be decomposed into two triangles of size $s$, that are $\triangle(j,\varepsilon,s)$ and $\triangle((j_1,j_2+\varepsilon_2s),\varepsilon,s)$, and $s$ arithmetic progressions of length $s$, that are
$$
S_k:=\left\{a_{j_1+\varepsilon_1 l,j_2+\varepsilon_2k}\ \middle| \ l\in[s-k,2s-k-1] \right\},
$$
for all $k\in[0,s-1]$. Since
$$
\partial^{j_2+\varepsilon_2k}\AP(a,d)=\AP(\sigma^{j_2+\varepsilon_2k}(a+(j_2+\varepsilon_2k)\sigma^{-1}\sigma'd),\sigma^{j_2+\varepsilon_2k}d),
$$
for all $k\in[0,s-1]$, by Proposition~\ref{cor1}, we deduce that $S_k$ is an arithmetic progression with invertible common difference and of length $s$, which is divisible by $m$. Therefore the sequence $S_k$ is balanced in $\ZZ{m}$ for all $k\in[0,s-1]$. Moreover, since $s\equiv 0\bmod{\lcm(\pord_m(\sigma),m)}$, we obtain that $\partial^{j_2+\varepsilon_2s}\AP(a,d)=\pm\partial^{j_2}\AP(a,d)$. If $\partial^{j_2+\varepsilon_2s}\AP(a,d)=\partial^{j_2}\AP(a,d)$, then $\triangle((j_1,j_2+\varepsilon_2s),\varepsilon,s)=\triangle(j,\varepsilon,s)$ and two copies of $\triangle$ can then be seen as the multiset difference of the balanced triangle $\triangle'$ and all the arithmetic progressions $S_k$, which are also balanced. Therefore, $\triangle$ is balanced in this case. Otherwise, if $\partial^{j_2+\varepsilon_2s}\AP(a,d)=-\partial^{j_2}\AP(a,d)$, then $\triangle((j_1,j_2+\varepsilon_2s),\varepsilon,s)$ is the opposite triangle of $\triangle$. Moreover, since $\triangle$ is antisymmetric,  it follows that
$$
\m_{\triangle((j_1,j_2+\varepsilon_2s),\varepsilon,s)}(x)=\m_{\triangle}(-x)=\m_{\triangle}(x)
$$
for all $x\in\ZZ{m}$. Finally, since $\triangle((j_1,j_2+\varepsilon_2s),\varepsilon,s)$ and $\triangle$ have the same multiplicity function and since they can be seen as the multiset difference of the balanced triangle $\triangle'$ and all the arithmetic progressions $S_k$, which are balanced, we deduce that $\triangle$ is also balanced in this case.
\end{case}
\begin{case}
$s\equiv -1\bmod{\lcm(\pord_m(\sigma),m)}$.\\
The triangle $\triangle(j,\varepsilon,s)$ can be seen as the multiset difference of $\triangle((j_1,j_2+\varepsilon_2),\varepsilon,s+1)$, which is balanced by Case~1, and the arithmetic progression $\AP(a_{j_1,j_2+\varepsilon_2},\sigma^{j_2+\varepsilon_2}d,s+1)$ of invertible common difference and of length $s+1\equiv 0\bmod{m}$, which is also balanced. The multiset difference of two balanced multisets is balanced. This completes the proof.
\end{case}
\end{proof}

\begin{figure}
\begin{center}
\begin{tikzpicture}[scale=0.4]
\draw (0,0) -- (0,8) -- (8,8) -- (0,0);
\draw (4,4) -- (0,4) -- (4,8);
\draw (1,5) -- (5,5);
\draw (2,6) -- (6,6);
\draw (3,7) -- (7,7);
\node at (4/3,20/3) {\small $\triangle_1$\ };
\node at (4/3,8/3) {\small $\triangle_2$};
\node at (11/2,7.5) {\small $S_0$};
\node at (9/2,6.5) {\small $S_1$};
\node at (7/2,5.5) {\small $S_2$};
\node at (5/2,4.5) {\small $S_3$};
\end{tikzpicture}
\begin{tikzpicture}[scale=0.4]
\draw (0,0) -- (0,8) -- (-8,0) -- (0,0);
\draw (-4,4) -- (0,4) -- (-4,0);
\draw (-5,3) -- (-1,3);
\draw (-6,2) -- (-2,2);
\draw (-7,1) -- (-3,1);
\node at (-4/3,16/3) {\small $\triangle_2$\ };
\node at (-4/3,4/3) {\small $\triangle_1$};
\node at (-11/2,0.5) {\small $S_0$};
\node at (-9/2,1.5) {\small $S_1$};
\node at (-7/2,2.5) {\small $S_2$};
\node at (-5/2,3.5) {\small $S_3$};
\end{tikzpicture}
\end{center}
\caption{Decomposition of $\triangle'=\triangle(j,\varepsilon,2s)$}\label{fig8}
\end{figure}

\begin{theorem}
Let $m$ be an odd positive integers and let $W\in\Z^{2r+1}$ such that $\sigma=-2\sigma'$ and $\sigma$ is invertible modulo $m$. Let $a,d\in\ZZ{m}$ such that $d$ is invertible. Then, in the orbit $\orb\left(\AP(a,d)\right)$, every $(0,1)$-antisymmetric triangle of orientation $(-+)$ or $(+-)$ and of size $s$ is balanced, for all positive integers $s\equiv 0$ or $-1 \bmod \lcm(\pord_m(\sigma),m)$, where $\pord_m(\sigma)$ is the multiplicative order of $\sigma$ in $\left(\ZZ{m}\right)^{*}/\{-1,1\}$.
\end{theorem}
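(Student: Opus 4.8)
The plan is to deduce this statement from Theorem~\ref{thm7} by a horizontal reflection of the orbit, which converts the hypothesis $\sigma=-2\sigma'$ into $\sigma=2\sigma'$ and simultaneously turns the two orientations $(-+)$ and $(+-)$ into $(++)$ and $(--)$. First I would introduce the reflection operator $\rho$ on doubly infinite sequences defined by $(\rho S)_i=S_{-i}$, together with the reflected weight sequence $W'=(w_r,\ldots,w_{-r})$ of $W=(w_{-r},\ldots,w_r)$ and its associated ACA $\partial'$. A one-line computation on the defining sum of an ACA gives the intertwining relation $\partial\circ\rho=\rho\circ\partial'$, hence $\partial^{k}\circ\rho=\rho\circ(\partial')^{k}$ for all $k$. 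Since $\rho\AP(a,d)=\AP(a,-d)$ and $\rho$ is an involution, this yields that the $k$th row of $\orb(\AP(a,d))$ taken under $\partial$ is the $\rho$-image of the $k$th row of $\orb(\AP(a,-d))$ taken under $\partial'$; in other words $\orb_{\partial}(\AP(a,d))$ is the rowwise $\rho$-image of $\orb_{\partial'}(\AP(a,-d))$. Moreover $\sigma(W')=\sigma(W)=\sigma$ while $\sigma'(W')=-\sigma'$, so the hypothesis $\sigma=-2\sigma'$ becomes $\sigma=2\sigma'(W')$, which is exactly the hypothesis of Theorem~\ref{thm7} for $\partial'$; note also that $-d$ is invertible since $m$ is odd, and that $\pord_m(\sigma)$, $\ord_m(\sigma)$, and the modulus $\lcm(\pord_m(\sigma),m)$ depend only on $\sigma$, hence are unchanged.

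Next I would transport the triangles. A triangle $\triangle(j,\varepsilon,s)$ of orientation $(-+)$ or $(+-)$ in $\orb_{\partial}(\AP(a,d))$ corresponds under the rowwise reflection to a triangle in $\orb_{\partial'}(\AP(a,-d))$ whose orientation has its space component $\varepsilon_1$ flipped and its row component $\varepsilon_2$ unchanged, so that $(-+)\mapsto(++)$ and $(+-)\mapsto(--)$. The reflection merely reverses each row of the triangle, so the multiset of its entries is unchanged (whence balancedness is preserved) and, because reversing a sequence $(a_1,\ldots,a_s)$ preserves the relation $a_i+a_{s-i+1}=0$, the $(0,1)$-antisymmetry is preserved as well. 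Applying Theorem~\ref{thm7} to $\partial'$, to the progression $\AP(a,-d)$, and to the orientations $(++)$ and $(--)$ shows that the reflected triangle is balanced for all $s\equiv0$ or $-1\bmod\lcm(\pord_m(\sigma),m)$; pulling back by $\rho$ gives the claim.

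The main obstacle is purely bookkeeping: one must check that the rowwise reflection sends the $(0,1)$-antisymmetric triangles of orientation $(-+)/(+-)$ bijectively onto the $(0,1)$-antisymmetric triangles of orientation $(++)/(--)$, keeping track of how the principal vertex position $j$ transforms, and that antisymmetry in the direction of the edge $\E_{0,1}$ (the space direction, which is the one being reflected) is exactly the feature that survives reversal. If one prefers to avoid the reflection formalism, the alternative is to repeat the proof of Theorem~\ref{thm7} verbatim with the sign changes induced by $\sigma=-2\sigma'$ and $\varepsilon_1\varepsilon_2=-1$: the decomposition of $\triangle(j,\varepsilon,2s)$ into two size-$s$ triangles and $s$ balanced arithmetic progressions, and the identity $\m_{\triangle((j_1,j_2+\varepsilon_2 s),\varepsilon,s)}(x)=\m_{\triangle}(-x)=\m_{\triangle}(x)$ coming from antisymmetry, go through unchanged, since Proposition~\ref{prop14} forces precisely $\sigma=2\varepsilon_1\varepsilon_2\sigma'$ and the two cases $\varepsilon_1\varepsilon_2=\pm1$ are symmetric.
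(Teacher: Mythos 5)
Your proposal is correct. The paper disposes of this theorem in one line (``Similar to the proof of Theorem~\ref{thm7}''), i.e.\ it takes your second, fallback route: re-run the argument of Theorem~\ref{thm7} with $\varepsilon_1\varepsilon_2=-1$, which is consistent with Proposition~\ref{prop14} forcing $\sigma=2\varepsilon_1\varepsilon_2\sigma'$. Your primary route --- the reflection $\rho$ with $(\rho S)_i=S_{-i}$, the reversed weight sequence $W'$, the intertwining $\partial\circ\rho=\rho\circ\partial'$, and the observations $\sigma(W')=\sigma$, $\sigma'(W')=-\sigma'$, $\rho\AP(a,d)=\AP(a,-d)$ --- is a genuinely different and arguably cleaner argument: it uses Theorem~\ref{thm7} as a black box rather than repeating its proof, at the cost of the bookkeeping you identify (the rowwise reflection sends $(-+)/(+-)$ triangles to $(++)/(--)$ triangles, preserves the multiset of entries, and preserves $(0,1)$-antisymmetry because each row is reversed and reversal preserves the relation $a_i+a_{s-i+1}=0$). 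All of these checks go through: the edge $\E_{0,1}$ points in the space direction, so the antisymmetric subsequences are exactly the rows, which reflection merely reverses; and $\pord_m(\sigma)$ and the modulus $\lcm(\pord_m(\sigma),m)$ depend only on $\sigma$, which is unchanged. Either route is a complete proof; the reflection argument has the advantage of making the symmetry between the two theorems explicit rather than leaving it implicit in a ``similar proof'' remark.
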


\begin{proof}
Similar to the proof of Theorem~\ref{thm7}
\end{proof}

\section{Open problems}

For the Pascal cellular automaton of dimension $1$, the following problems remain open.

\begin{problem}
For $m$ even, do there exist infinitely many balanced triangles of $\ZZ{m}$, with any orientation, generated by $\PCA{1}$?
\end{problem}

\begin{problem}
For $m$ odd divisible by $3$, do there exist infinitely many balanced triangles of $\ZZ{m}$, with orientations $++$ and $--$, generated by $\PCA{1}$?
\end{problem}

For the Pascal cellular automaton of dimension $2$, the following problems remain open.

\begin{problem}
For $m$ divisible by $3$, do there exist infinitely many balanced tetrahedra of $\ZZ{m}$, with any orientation, generated by $\PCA{2}$?
\end{problem}

\begin{problem}
For $m$ even such that $v_2(m)\ge 2$, do there exist infinitely many balanced tetrahedra of $\ZZ{m}$, with any orientation, generated by $\PCA{2}$?
\end{problem}

\begin{problem}
For $m$ odd divisible by $5$, do there exist infinitely many balanced tetrahedra of $\ZZ{m}$, with orientations $++-$, $+-+$, $+--$, $-++$, $-+-$ and $---$, generated by $\PCA{2}$?
\end{problem}

\begin{problem}
For $m$ divisible by $5$ such that $v_2(m)\le 1$, do there exist infinitely many balanced tetrahedra of $\ZZ{m}$, with orientations $++-$ and $--+$, generated by $\PCA{2}$?
\end{problem}

\section*{Acknowledgments}
The author would like to thank the anonymous referee for the time spent reading this manuscript and for useful comments and remarks, which improved the presentation of the paper.


\end{document}